\RequirePackage[T1]{fontenc}
\RequirePackage[utf8]{inputenc}
\documentclass[EJP,preprint]{ejpecp}
\makeatletter
\renewcommand{\@EJPLOGO}{}
\makeatother
\usepackage{amsmath,amssymb,mathtools}
\usepackage{enumitem}
\DeclareMathOperator{\dist}{dist}
\DeclareMathOperator{\diam}{diam}
\DeclareMathOperator{\rankop}{rank}
\DeclareMathOperator{\im}{im}
\SHORTTITLE{Persistence of the Wiener Sausage}
\TITLE{Persistence of the Wiener Sausage: Sampling Stability and a Law of Large Numbers for Drifted Planar Brownian Motion}
\AUTHORS{\normalsize\itshape DRAFT - CURRENTLY UNDER REVIEW\\[0.5em]\normalfont Tristan~Guillaume\footnote{CY Cergy Paris Universit\'e, Laboratoire Thema, Cergy, France. Correspondence: Tristan Guillaume, CY Cergy Paris Universit\'e, Laboratoire Thema, 33 boulevard du port, F-95011 Cergy-Pontoise Cedex, France. Tel: +33-6-12-22-45-88. \EMAIL{tristan.guillaume@cyu.fr}}}
\KEYWORDS{Wiener sausage; persistent homology; Brownian motion; regeneration; law of large numbers}
\AMSSUBJ{60D05; 55N31; 60J65; 60K05}
\SUBMITTED{March 18, 2026}
\DOI{}
\ABSTRACT{We study the persistent homology of the offset filtration generated by the range of a planar Brownian motion with constant nonzero drift. The members of this filtration are the Wiener sausages of increasing radius, and the degree-one persistence diagram records the birth and death of holes in the thickened trace as the radius varies.\par

Our first result is a sampling theorem: for any continuous path in $\mathbb{R}^{d}$ observed on a time grid $\pi_{n} = \{ 0 = t_{0} < t_{1} < \cdots < t_{n} = T\}$, of mesh $\lvert \pi_{n} \rvert  \coloneqq \max_{0 \leq i \leq n - 1}\left( t_{i + 1} - t_{i} \right)$, the bottleneck distance between the persistence diagram of the continuous offset filtration and that of the sampled point cloud is bounded by the pathwise modulus of continuity $\omega_{X}(\lvert \pi_{n} \rvert)$. For Brownian motion this yields the almost-sure rate $O\!\left( \sqrt{\lvert \pi_{n} \rvert\log\!\left( 1/\lvert \pi_{n} \rvert \right)} \right)$.\par

Our second and main result is a law of large numbers for the drifted planar case. For every bounded Borel weight $\psi$ supported on a compact radius window $[r_{0},r_{1}]$ with $r_{0} > 0$, the smoothed persistence functional $\Phi_{\psi}(T)$, where $\beta_{1}^{T}(r)$ counts the holes in the radius-$r$ sausage at time $T$, satisfies $\Phi_{\psi}(T)/T \to \rho_{\psi}$ almost surely and in $L^{1}$ for a deterministic constant $\rho_{\psi}$. This yields a finite positive intensity measure on the radius axis that governs the linear growth of topological complexity.\par

The proof introduces a regeneration scheme along the drift direction: projecting the planar path onto the drift axis produces a one-dimensional Brownian motion with positive drift, whose ladder hits and bounded-backtracking events generate i.i.d.~path blocks. The non-additivity of topology under concatenation is controlled by a Boundary Lemma, which combines a deterministic Mayer--Vietoris estimate with a geometric bound relating integrated Betti numbers to sausage area via the coarea formula. A Betti-curve representation converts the two-parameter persistence problem into a one-parameter family of fixed-radius hole counts, making the regeneration argument possible.}
\begin{document}
\section{Introduction}\label{introduction}

Since the seminal paper of Spitzer \cite{ref23} and the foundational works
of Donsker and Varadhan \cite{ref10} and Le Gall \cite{ref17}, the Wiener
sausage has been a classical object of probability theory. Given a path
\(X = \left( X_{t} \right)_{t \geq 0}\) in \(\mathbb{R}^{d}\), its range
up to time \(T\),

\[K_{T} \coloneqq X\left( [ 0,T] \right),\]

generates the offset filtration

\[(K_{T}^{(r)})_{r \geq 0},\quad\quad K_{T}^{(r)} \coloneqq \{ x \in \mathbb{R}^{d}:\dist(x,K_{T}) \leq r\},\]

Wiener sausages have largely been studied through geometric functionals
such as volume, capacity, intersection structure, surface area,
curvature measures, and Euler characteristic. In contrast, persistent
homology treats the entire one-parameter family
\(r \mapsto K_{T}^{(r)}\) as a single multiscale object and records the
birth and death of topological features as the radius varies. For a
sausage filtration this viewpoint is especially natural: in the planar
case, degree-one persistence records the creation and filling of holes
in the thickened trace, thereby capturing geometric information that is
invisible to single-scale observables.

The aim of this paper is to initiate a limit theory for the persistent
homology of Wiener sausage filtrations. We focus on the planar drifted
Brownian case

\[X_{t} = x_{0} + \mu t + B_{t},\quad\quad\mu \in \mathbb{R}^{2}\setminus\{ 0\},\]

and study the degree-one persistence diagram of the offset filtration of
\(K_{T}\). The restriction to dimension \(2\) is geometrically natural:
because the range of a continuous path is connected, the first
nontrivial persistent information in the sausage filtration is carried
by one-dimensional holes. The restriction to nonzero drift is
probabilistically structural: the drift creates a preferred direction
along which the path advances through fresh space, and this yields a
regeneration mechanism that makes a law of large numbers possible.
Without drift, the corresponding one-dimensional projection is recurrent
and the renewal structure used here collapses.

Our first contribution is a sampling theorem that places the persistence
of Wiener sausages in a data-analytic framework. For a continuous path
observed on a time grid, we prove that the bottleneck distance between
the persistence diagram of the continuous sausage filtration and that of
the sampled point cloud is bounded by the pathwise modulus of
continuity. The proof combines Hausdorff control of the sampled range,
interleaving of offset filtrations, and the stability theory of
persistence diagrams. At the foundational level, we rely on the
persistence formalism introduced by Edelsbrunner, Letscher, and
Zomorodian \cite{ref11}, on the stability theorem of Cohen-Steiner,
Edelsbrunner, and Harer \cite{ref9}, and on the \(q\)-tame extension
developed by Chazal, Cohen-Steiner, Glisse, Guibas, and Oudot \cite{ref7},
together with the broader persistence-module framework synthesized in
Chazal, de Silva, Glisse, and Oudot \cite{ref8}. Specializing our
deterministic bound to Brownian motion yields the natural almost-sure
rate
\(O\left( \sqrt{\lvert \pi_{n} \rvert\log\left( 1/\lvert \pi_{n} \rvert \right)} \right)\)
under a mesh size
\(\lvert \pi_{n} \rvert \coloneqq \max_{0 \leq i \leq n - 1}\left( t_{i + 1} - t_{i} \right)\)
for a time grid
\(\pi_{n} = \{ 0 = t_{0} < t_{1} < \cdots < t_{n} = T\}\), while
globally Lipschitz diffusions give Hölder-type rates via Kolmogorov
continuity. We also record two extensions of direct practical relevance:
additive observation noise and polygonal interpolation.

Our second and main contribution is the first large-time limit theorem
for a persistence functional of the Wiener sausage in the drifted planar
case. For bounded compactly supported \(\psi\) on a fixed radius window
\(\left[ r_{0},r_{1} \right] \subset (0,\infty)\), we
consider the smoothed persistence functional

\[\Phi_{\psi}(T) \coloneqq \int_{\Delta}\varphi_{\psi}(b,d)\,\mu_{T}^{(1)}(db\, dd),\quad\quad\varphi_{\psi}(b,d) = \int_{b}^{d}\psi(r)\, dr,\]

where \(\mu_{T}^{(1)}\) is the degree-one persistence counting measure
of the sausage filtration up to time \(T\). We prove that there exists a
deterministic constant \(\rho_{\psi}\) such that

\[\frac{\Phi_{\psi}(T)}{T} \rightarrow \rho_{\psi}\quad\quad\text{almost surely and in }L^{1}.\]

Equivalently, the smoothed persistence mass in a fixed birth--death
window grows linearly in time with deterministic asymptotic slope. This
yields a persistence-intensity theory at the level of smoothed Betti
observables and, via the Betti-curve representation, a finite positive
intensity measure on the radius axis.

The proof mechanism differs substantially from existing limit-theorem
frameworks in topological data analysis. The first ingredient is a
Betti-curve representation: for the class of test functions above,
\(\Phi_{\psi}(T)\) can be rewritten as

\[\Phi_{\psi}(T) = \int_{r_{0}}^{r_{1}}\beta_{1}^{T}(r)\,\psi(r)\, dr,\]

where \(\beta_{1}^{T}(r)\) is the number of holes in the radius-\(r\)
sausage. This allows us to work with fixed-radius hole counts rather
than individual persistence points. The second ingredient is a
regeneration structure built from the drift direction: projecting onto
\(\mu/ \lVert \mu \rVert\) produces a one-dimensional Brownian
motion with positive drift, whose ladder hits and bounded-backtracking
events generate i.i.d. path blocks. The third ingredient is a
deterministic Mayer--Vietoris estimate which shows that the
non-additivity of \(\beta_{1}\) under concatenation is confined to a
local interface term. The resulting Boundary Lemma shows that the
expected interface correction per regeneration cut is finite. With these
pieces in place, the law of large numbers follows from ergodic and
renewal arguments at the cycle level.

This paper sits at the intersection of three literatures. The first is
the general theory of persistent homology and its stability. Persistence
diagrams for tame or \(q\)-tame filtrations are now standard, beginning
with \cite{ref11} and developed further in \cite{ref9}, \cite{ref7}, \cite{ref12},
\cite{ref8}. The second is the probabilistic literature on topological
summaries of random geometric objects. A major line of work, surveyed by
Bobrowski and Kahle \cite{ref5}, studies homology and persistent homology
for random geometric complexes generated by i.i.d. or Poisson point
clouds. For persistent-homological observables and their extremes, see
also Bobrowski, Kahle, and Skraba \cite{ref4}. A particularly important
structural precedent for the present article is the law of large numbers
for persistence diagrams of stationary point processes proved by
Hiraoka, Shirai, and Trinh \cite{ref13}. More generally, strong laws,
central limit theorems, and process-level limit theorems for Betti and
persistent Betti functionals have been obtained in several
random-complex models; see, for example, Yogeshwaran, Subag, and Adler
\cite{ref25}, Owada and Thomas \cite{ref20}, and Krebs and Hirsch \cite{ref15}. Our
setting is fundamentally different from these point-process models. The
input here is not a random cloud or random complex but the trace of a
single continuous path, and the dependence is not spatially ergodic in
the sense of windowed point-process theory. The substitute is temporal
regeneration of the drifted path.

There is also an inferential perspective in the broader TDA literature
that is worth mentioning. The idea that finite sampled data can recover
underlying topology with probabilistic guarantees goes back at least to
Niyogi, Smale, and Weinberger \cite{ref19}. Our sampling theorem is of a
different nature---it concerns the persistence diagram of a continuous
stochastic path and the discretization error induced by time sampling
rather than homology recovery for an underlying manifold---but it
belongs to the same general program of topological inference from
incomplete observations. The algebraic-topological background used
throughout, including the Nerve Theorem, Čech and Vietoris--Rips
constructions, and Mayer--Vietoris arguments, is standard and may be
found in Edelsbrunner and Harer \cite{ref12}.

The third literature is the classical Wiener-sausage literature.
Foundational contributions include Spitzer \cite{ref23}, which initiated the
subject through its connections with capacity and Brownian motion,
Donsker and Varadhan \cite{ref10} on asymptotics for the Wiener sausage, and
Le Gall [17, 18] on planar Wiener sausages, multiple points, and
their connection with self-intersection local times. For a broad
synthesis of Brownian motion, obstacles, and Wiener-sausage-type
questions, see Sznitman \cite{ref24}. Closer in spirit to the geometric side
of the present paper are works on fixed-radius geometric characteristics
of the sausage: Rataj, Schmidt, and Spodarev \cite{ref21} study its expected
surface area; Last \cite{ref16} studies mean curvature functions of Brownian
paths; and Rataj, Spodarev, and Meschenmoser \cite{ref22} analyze
approximations and curvature measures. In the planar setting, Honzl
\cite{ref14} studies connected components of the complement of a Wiener
sausage and derives an upper bound on its Euler characteristic. These
works are directly relevant to the geometry underlying holes in planar
sausages, but they concern single-radius geometric observables rather
than the persistent homology of the entire offset filtration.

There is also work connecting Brownian motion and persistence, but in a
different sense from the one studied here. Baryshnikov \cite{ref2} analyzes
persistent homology associated with Brownian motion viewed as a
one-dimensional time series, using the sublevel-set filtration of the
path. That theory detects extrema of a real-valued function. Our object
is instead the offset filtration of a planar geometric trace, and the
relevant topological features are spatial holes in the thickened range.
The filtration, the ambient space, and the resulting persistence are
therefore different. We are not aware of prior work on the persistence
diagrams of Wiener sausage filtrations themselves, nor of a limit
theorem of the type proved here for a continuous stochastic path.

A final remark concerns the scope of the main theorem. The law of large
numbers proved here is formulated for smoothed persistence functionals
rather than for the full diagram measure on a compact birth--death
window. This is not merely a technical convenience. The test class

\[\varphi_{\psi}(b,d) = \int_{b}^{d}\psi(r)\, dr\]

is exactly the class that is compatible with the Betti-curve
representation and with the regeneration argument. It yields a genuine
persistence-intensity theory at the level of alive-count observables. At
the same time, as discussed in Section~\ref{law-of-large-numbers-for-persistence-intensity}, this class is not
measure-determining for the full diagram measure, so the present theorem
should be understood as a first limit theorem for Wiener-sausage
persistence rather than as a complete asymptotic theory of persistence
diagrams.

The paper is organized as follows. Section~\ref{setup-and-preliminaries} introduces the offset
filtration of a compact set, its persistence diagram, the associated
counting measures and Betti curves, and the smoothed persistence
functionals that appear in the main theorem. Section~\ref{sampling-stability-for-continuous-paths} proves the
sampling-stability theorem for continuous paths and derives explicit
rates for Brownian motion and Lipschitz diffusions. Section~\ref{regeneration-structure-for-drifted-planar-brownian-motion} develops
the regeneration structure for planar Brownian motion with nonzero
drift, including the moment bounds needed near regeneration interfaces.
Section~\ref{law-of-large-numbers-for-persistence-intensity} combines these ingredients to prove the law of large numbers
for smoothed persistence intensity in the drifted planar case.

\section{Setup and preliminaries}\label{setup-and-preliminaries}

Throughout the paper, all homology groups are taken with coefficients in
a fixed field \(\mathbb{k}\). We work in the birth--death plane, not in
birth--lifetime coordinates, and we parametrize all offset filtrations
by the radius \(r \geq 0\), not by the diameter \(2r\). For \(q = 0\) we
use reduced homology in order to avoid carrying the single essential
connected-component class; for \(q \geq 1\), ordinary homology is used.

\subsection{Offsets of compact sets and Wiener sausage filtrations}\label{offsets-of-compact-sets-and-wiener-sausage-filtrations}

Let \(A \subset \mathbb{R}^{d}\) be nonempty and compact. For
\(r \geq 0\), its closed \(r\)-offset is

\[A^{(r)} \coloneqq \{ x \in \mathbb{R}^{d}:\dist(x,A) \leq r\},\]

where

\[\dist(x,A) \coloneqq \inf_{a \in A} \parallel x - a \parallel .\]

The family

\[\mathcal{F}(A) \coloneqq (A^{(r)})_{r \geq 0}\]

is the offset filtration generated by \(A\).

If \(X = \left( X_{t} \right)_{t \geq 0}\) is a continuous
\(\mathbb{R}^{d}\)-valued path and \(T > 0\), we write

\[K_{T} \coloneqq X\left( [ 0,T] \right)\]

for its range up to time \(T\). The corresponding Wiener sausage
filtration is

\[\mathcal{F}_{T} \coloneqq \mathcal{F}\left( K_{T} \right) = (K_{T}^{(r)})_{r \geq 0}.\]

Equivalently,

\[K_{T}^{(r)} = \bigcup_{0 \leq t \leq T}\overline{B}\left( X_{t},r \right).\]

where \(\overline{B}(x,r)\) denotes a closed Euclidean ball of radius
\(r\) centered at \(x\)

When comparing two compact sets \(A,B \subset \mathbb{R}^{d}\), we use
the Hausdorff distance

\[d_{H}(A,B) \coloneqq max\left\{ \sup_{a \in A}dist(a,B),\sup_{b \in B}dist(b,A) \right\}\]

A basic identity that will be used repeatedly is

\[\lVert d_{A} - d_{B} \rVert_{\infty} = d_{H}(A,B),\]

where \(d_{A}(x) \coloneqq \dist(x,A)\) and \(d_{B}(x) \coloneqq dist(x,B)\).

For later use, we record one elementary geometric fact. If \(A\) is
connected, then each offset \(A^{(r)}\) is connected. In particular, for
the range of a continuous path in the plane, the interesting persistent
homology begins in degree \(1\), where holes in the thickened trace
appear and disappear as the radius varies.

\subsection{Persistence diagrams and their metrics}\label{persistence-diagrams-and-their-metrics}

Let \(\mathcal{X =}\left( X_{r} \right)_{r \geq 0}\) be an increasing
filtration of topological spaces. For \(0 \leq r \leq s\), the inclusion
\(X_{r} \hookrightarrow X_{s}\) induces a linear map

\[H_{q}\left( X_{r} \right) \rightarrow H_{q}\left( X_{s} \right).\]

The \(q\)-dimensional persistent homology module of \(\mathcal{X}\) is
the resulting functor in the parameter \(r\).

When this module is \(q\)-tame, its multiset of finite intervals is
encoded by a persistence diagram, denoted

\[{Dgm}_{q}\left( \mathcal{X} \right).\]

In the present paper we apply this to offset filtrations and write, by
abuse of notation,

\[{Dgm}_{q}(A) \coloneqq {Dgm}_{q}\left( \mathcal{F}(A) \right),\quad\quad{Dgm}_{q}(T) \coloneqq {Dgm}_{q}\left( \mathcal{F}_{T} \right).\]

We regard persistence diagrams as locally finite multisets in

\[\Delta \coloneqq \{(b,d) \in [ 0,\infty)^{2}:\ b < d\},\]

augmented by the diagonal

\[\partial\Delta \coloneqq \{(r,r):r \geq 0\}\]

for the purpose of matchings.

The metric used most often below is the bottleneck distance. If \(D\)
and \(E\) are persistence diagrams, their bottleneck distance is

\[d_{B}(D,E) \coloneqq \inf_{\gamma}\sup_{x} \parallel x - \gamma(x) \parallel_{\infty},\]

where the infimum ranges over all partial matchings between \(D\) and
\(E\), with unmatched points allowed to be paired with the diagonal.
When needed, one may also consider the \(p\)-Wasserstein distance

\[W_{p}(D,E) \coloneqq \inf_{\gamma}\left( \sum_{x}^{}\left\| x - \gamma(x) \right\|_{\infty}^{p} \right)^{1/p},\ \ \ 1 \leq p < \infty\]

again with the convention that unmatched points are matched to the
diagonal. In the proofs of the sampling theorem, only \(d_{B}\) will be
required.

\subsection{q-tameness of offset filtrations}\label{q-tameness-of-offset-filtrations}

The use of persistence diagrams for Wiener sausages rests on a standard
tameness fact for sublevel-set filtrations of proper functions.

\begin{proposition}
\label{prop:2.1}
Let \(A \subset \mathbb{R}^{d}\) be compact. Then the offset
filtration \(\mathcal{F}(A) = (A^{(r)})_{r \geq 0}\)
is \(q\)-tame for every \(q \geq 0\). Consequently,
the persistence diagram \({Dgm}_{q}(A)\) is well defined.
\end{proposition}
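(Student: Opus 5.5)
The plan is to reduce $q$-tameness to a finite-rank statement for the maps $H_q(A^{(r)})\to H_q(A^{(s)})$ with $r<s$, and to obtain that by factoring each such map through a finite simplicial complex. Recall that $q$-tame means every such map with $r<s$ has finite rank. Once this holds, the structure theory of Chazal, Cohen-Steiner, Glisse, Guibas, and Oudot \cite{ref7} and Chazal, de Silva, Glisse, and Oudot \cite{ref8} yields a well-defined persistence diagram. So the whole task is the finite-rank statement.

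Fix $0\le r<s$ and set $\varepsilon=(s-r)/3>0$. By compactness of $A$, choose a finite $\varepsilon$-net $P\subset A$, so that $d_H(A,P)\le\varepsilon$. By the identity $\lVert d_A-d_P\rVert_\infty=d_H(A,P)$ recorded above, we get the inclusions
\[
A^{(r)}\subset P^{(r+\varepsilon)}\subset A^{(r+2\varepsilon)}\subset A^{(s)}.
\]
Therefore the map $H_q(A^{(r)})\to H_q(A^{(s)})$ factors through $H_q(P^{(r+\varepsilon)})$.

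The set $P^{(r+\varepsilon)}$ is a finite union of closed Euclidean balls. These balls are convex, so every nonempty intersection is convex and hence contractible. The Nerve Theorem \cite{ref12} then shows that $P^{(r+\varepsilon)}$ is homotopy equivalent to the Čech complex of $P$ at radius $r+\varepsilon$. That complex is a finite simplicial complex, so its homology is finite-dimensional in every degree. A linear map that factors through a finite-dimensional space has finite rank, which gives $q$-tameness for all $q\ge 0$; the same argument applies to reduced homology in degree $0$.

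The only step needing care is the application of the Nerve Theorem to a union of closed sets. Here it is harmless: the cover is finite, consists of closed convex sets in $\mathbb{R}^d$, and all intersections are compact convex sets. The closed-convex version of the Nerve Theorem applies directly to such covers. Alternatively, one can thicken each ball slightly to an open ball without changing the nerve, since the family is finite. The case $r=0$ requires no separate treatment, because the factorization above is valid as soon as $s>r$.

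One could instead appeal to the general result that sublevel-set filtrations of proper continuous functions on a finite-dimensional space are $q$-tame; here the relevant function is $d_A$. The direct factorization argument above avoids needing that general theorem.
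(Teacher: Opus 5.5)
Your proof is correct, and it takes a more hands-on route than the paper. The paper disposes of the proposition in one line. It observes that $d_A$ is continuous, $1$-Lipschitz and proper, so $\mathcal{F}(A)$ is the sublevel-set filtration of a proper continuous function on the triangulable space $\mathbb{R}^{d}$, and it then cites the general $q$-tameness result for such filtrations from \cite{ref8}.

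You instead prove the finite-rank property directly. An $(s-r)/3$-net $P\subset A$ sandwiches $P^{(r+\varepsilon)}$ between $A^{(r)}$ and $A^{(s)}$, and the Nerve Theorem identifies $P^{(r+\varepsilon)}$ with a finite \v{C}ech complex. This is the same ``factor through something finite-dimensional'' mechanism that underlies the cited theorem. Specialized to offsets, however, it is self-contained: it uses only compactness, the interleaving of Lemma~\ref{lem:3.2}, and the Nerve Theorem already invoked in Remark~\ref{rem:3.9}. It is also quantitative: the rank of $H_q(A^{(r)})\to H_q(A^{(s)})$ is at most $\binom{N}{q+1}$, where $N$ is the cardinality of the net. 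The paper's citation is shorter and covers arbitrary proper continuous functions, but it treats the approximation step as a black box.

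There is one further point in your favour. After the $q$-tameness claim, the paper's proof asserts that only finitely many diagram points lie in $\Delta[r_0,r_1]$. Neither its citation nor your factorization yields this, and you were right not to claim it. Finite rank of $H_q(A^{(r)})\to H_q(A^{(s)})$ for $r<s$ only bounds the number of points with $b\le r$ and $d>s$ (up to endpoint conventions). That is, it gives finiteness away from the diagonal, whereas $\Delta[r_0,r_1]$ touches the diagonal.

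Accumulation there really occurs, so that claim is false in general. Take $A=\{(1/k,\pm 1):k\ge 1\}\cup\{(0,\pm 1)\}\subset\mathbb{R}^{2}$ and set $g_k=1/(k(k+1))$. Each rectangle $[1/(k+1),1/k]\times[-1,1]$ encloses a hole of $A^{(r)}$ exactly for $1\le r<\sqrt{1+g_k^{2}/4}$. Hence $\mathrm{Dgm}_1(A)$ contains the infinitely many points $\bigl(1,\sqrt{1+g_k^{2}/4}\bigr)$, and $H_1(A^{(1)})$ is infinite-dimensional. Your argument handles this correctly precisely because it never needs $H_q(A^{(r)})$ itself to be finite-dimensional.
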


\begin{proof}
The distance function
\(d_{A}:\mathbb{R}^{d} \rightarrow [ 0,\infty)\) is continuous,
\(1\)-Lipschitz, and proper because \(A\) is compact. Since

\[A^{(r)} = d_{A}^{- 1}(\left( - \infty,r] \right),\]

the filtration \(\mathcal{F}(A)\) is the sublevel-set filtration of a
proper continuous function on the triangulable space \(\mathbb{R}^{d}\).
Standard results in persistent homology imply that such filtrations are
\(q\)-tame (\cite{ref8}).

In particular, for every finite interval
\(\left[ r_{0},r_{1} \right] \subset (0,\infty)\), only
finitely many off-diagonal diagram points lie in

\[\Delta\left[ r_{0},r_{1} \right] \coloneqq \{(b,d) \in \Delta:\ r_{0} \leq b < d \leq r_{1}\}.\]

This local finiteness is the basic reason that all persistence
functionals considered later are well defined on compact birth--death
windows.

A second elementary observation will also be useful. If
\(A \subset \mathbb{R}^{d}\) is compact, then \(A^{(r)}\) is
contractible for all sufficiently large \(r\). Indeed, if
\(r > \diam(A)\) and \(a_{0} \in A\), then
\(a_{0} \in \overline{B}(a,r)\) for every \(a \in A\), so \(A^{(r)}\) is
a union of convex sets all containing \(a_{0}\), hence is star-shaped.
Therefore all positive-dimensional persistence classes of
\(\mathcal{F}(A)\) have finite death times.
\end{proof}

\subsection{Stability with respect to Hausdorff perturbations}\label{stability-with-respect-to-hausdorff-perturbations}

The passage from a continuous path to a sampled point cloud relies on
the stability of persistence for sublevel-set filtrations. In the
present geometric setting, the statement takes a particularly simple
form.

\begin{theorem}[stability for offsets]
\label{thm:2.2}
For compact sets \(A,B \subset \mathbb{R}^{d}\) and each
\(q \geq 0\),

\[d_{B}\left( {Dgm}_{q}(A),{Dgm}_{q}(B) \right) \leq d_{H}(A,B).\]
\end{theorem}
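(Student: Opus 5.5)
The plan is to reduce the statement to the algebraic stability theorem for \(q\)-tame persistence modules, using an interleaving of the two offset filtrations. Set \(\varepsilon \coloneqq d_{H}(A,B)\). By the identity \(\lVert d_{A} - d_{B} \rVert_{\infty} = d_{H}(A,B)\) recorded in Section~\ref{offsets-of-compact-sets-and-wiener-sausage-filtrations}, for every \(x \in \mathbb{R}^{d}\) we have \(d_{B}(x) \leq d_{A}(x) + \varepsilon\) and \(d_{A}(x) \leq d_{B}(x) + \varepsilon\). Hence, for every \(r \geq 0\), we get the set-theoretic inclusions
\[A^{(r)} \subseteq B^{(r+\varepsilon)}, \qquad B^{(r)} \subseteq A^{(r+\varepsilon)}.\]
These inclusions are compatible with the filtration inclusions \(A^{(r)} \hookrightarrow A^{(s)}\) and \(B^{(r)} \hookrightarrow B^{(s)}\), since all maps involved are inclusions of subsets of \(\mathbb{R}^{d}\). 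In particular, each composite \(A^{(r)} \subseteq B^{(r+\varepsilon)} \subseteq A^{(r+2\varepsilon)}\) is simply the inclusion \(A^{(r)} \hookrightarrow A^{(r+2\varepsilon)}\), and symmetrically for \(B\).

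Applying \(H_{q}(\,\cdot\,;\mathbb{k})\) with the convention of reduced homology in degree \(0\) is functorial. It therefore turns these inclusions into families of linear maps
\[\phi_{r}\colon H_{q}(A^{(r)}) \to H_{q}(B^{(r+\varepsilon)}), \qquad \psi_{r}\colon H_{q}(B^{(r)}) \to H_{q}(A^{(r+\varepsilon)}).\]
These maps commute with the structure maps of the two persistence modules. Moreover, \(\psi_{r+\varepsilon}\circ\phi_{r}\) and \(\phi_{r+\varepsilon}\circ\psi_{r}\) are the \(2\varepsilon\)-shift maps of the modules. In other words, the two modules are \(\varepsilon\)-interleaved in the sense of \cite{ref7,ref8}.

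By Proposition~\ref{prop:2.1}, both modules are \(q\)-tame, so their diagrams are well defined. The algebraic stability (isometry) theorem for \(q\)-tame modules \cite{ref7,ref8} then gives
\[d_{B}\left({Dgm}_{q}(A),{Dgm}_{q}(B)\right) \leq \varepsilon,\]
which is the claim. Alternatively, one could cite the sublevel-set stability theorem of \cite{ref9} directly for \(f = d_{A}\) and \(g = d_{B}\). However, that version assumes tameness and finiteness hypotheses on a triangulable space, which are awkward to check for general compact \(A\). For this reason I prefer the \(q\)-tame route.

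I expect the main obstacle to be a matter of bookkeeping rather than mathematics. One must make sure that the diagram of a \(q\)-tame module, defined through the decorated or measure-theoretic construction of \cite{ref8}, coincides with the multiset of finite intervals in \(\Delta\) used in Section~\ref{persistence-diagrams-and-their-metrics}. One must also check that the bottleneck distance conventions agree: the \(\ell^{\infty}\) norm, matching to the diagonal, and ignoring decorations, since bottleneck distance does not see endpoint types. A second point is the infinite interval in degree \(0\), which is removed by using reduced homology. For \(q \geq 1\), all classes die in finite time, by the star-shapedness observation in the proof of Proposition~\ref{prop:2.1}. So the diagrams lie in \(\Delta\) as stated, and the theorem follows.
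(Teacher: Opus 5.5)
Your proposal is correct and takes the same route as the paper. The paper's proof is a one-line appeal to the stability theorem of \cite{ref7}, applied to the distance functions \(d_A, d_B\) together with the identity \(\lVert d_A - d_B\rVert_\infty = d_H(A,B)\); your explicit \(\varepsilon\)-interleaving of the offset modules, followed by algebraic stability for \(q\)-tame modules, is exactly the content of that citation written out.
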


\begin{proof}
This is an immediate consequence of the general stability
theorem for persistence diagrams (\cite{ref7}) applied to the distance
functions \(d_{A}\) and \(d_{B}\), together with the identity

\(\lVert d_{A} - d_{B} \rVert_{\infty} = d_{H}(A,B).\)
\end{proof}

We shall use Theorem~\ref{thm:2.2} in Section~\ref{sampling-stability-for-continuous-paths} with \(A\) equal to the continuous
range \(K_{T}\) and \(B\) equal to a discrete approximation of that
range. The theorem is the last step in the chain

\[\text{path regularity}\mspace{6mu} \Rightarrow \mspace{6mu}\text{Hausdorff control}\mspace{6mu} \Rightarrow \mspace{6mu}\text{barcode control}.\]

\subsection{Persistence counting measures, Betti curves, and smoothed functionals}\label{persistence-counting-measures-betti-curves-and-smoothed-functionals}

Let \(A \subset \mathbb{R}^{d}\) be compact, and fix \(q \geq 0\). The
\(q\)-th persistence counting measure of the offset filtration
\(\mathcal{F}(A)\) is the locally finite measure on \(\Delta\) defined
by

\[\mu_{A}^{(q)} \coloneqq \sum_{x \in {Dgm}_{q}(A)}m(x)\,\delta_{x},\]

where \(m(x)\) denotes the multiplicity of the diagram point \(x\). For
the Wiener sausage filtration of a path up to time \(T\), we write

\[\mu_{T}^{(q)} \coloneqq \mu_{K_{T}}^{(q)}.\]

The corresponding Betti curve is the function

\[\beta_{q}^{A}(r) \coloneqq \rankop H_{q}\left( A^{(r)} \right)\quad\quad(r \geq 0).\]

Equivalently, in terms of the persistence counting measure,

\[\beta_{q}^{A}(r) = \mu_{A}^{(q)}\left( \{(b,d) \in \Delta:\ b \leq r < d\} \right).\]

Thus \(\beta_{q}^{A}(r)\) counts the number of \(q\)-dimensional
persistence intervals alive at scale \(r\).

In the planar case \(d = 2\), and for connected sets \(A\), the degree
\(q = 1\) Betti curve has a direct geometric interpretation:
\(\beta_{1}^{A}(r)\) is the number of bounded connected components of
\(\mathbb{R}^{2}\setminus A^{(r)}\), that is, the number of holes in
the sausage at radius \(r\).

We now define the persistence functionals that will enter our LLN (Law
of Large Numbers). Let

\[\varphi:\Delta \to \mathbb{R}\]

be bounded, Borel, and compactly supported. We set

\[\Phi_{\varphi}(A) \coloneqq \int_{\Delta}\varphi(b,d)\,\mu_{A}^{(1)}(db\, dd).\]

Because \(\mu_{A}^{(1)}\) is locally finite and \(\varphi\) has compact
support, \(\Phi_{\varphi}(A)\) is well defined.

The most important class of test functions for us is obtained from
one-variable weights. Let
\(\psi:[0,\infty) \to \mathbb{R}\) be bounded and
compactly supported, and define

\[\varphi_{\psi}(b,d) \coloneqq \int_{b}^{d}\psi(r)\, dr.\]

Then Fubini's theorem gives the identity

\[\Phi_{\varphi_{\psi}}(A) \coloneqq \int_{\Delta}{}\left( \int_{b}^{d}{\psi(r)dr} \right)\,\mu_{A}^{(1)}(db\, dd) = \int_{0}^{\infty}{\beta_{1}^{A}(r)\psi(r)dr}\]

This representation is one of the main structural devices of the paper.
It allows us to replace a problem about individual persistence points by
a problem about the one-parameter family \(r \mapsto \beta_{1}^{A}(r)\).
In the large-time analysis of drifted planar Brownian motion, this
reformulation is what makes the regeneration method workable.

For the limit theory, we shall often restrict attention to a compact
birth--death window

\[\Delta\left[ r_{0},r_{1} \right] \coloneqq \{(b,d) \in \Delta:\ r_{0} \leq b < d \leq r_{1}\},\quad\quad 0 < r_{0} < r_{1} < \infty.\]

The condition \(r_{0} > 0\) excludes the very small-radius regime, where
Brownian roughness may produce infinitely many tiny features. All
diagram-measure limits in Section~\ref{law-of-large-numbers-for-persistence-intensity} will be stated on such compact
windows.

\section{Sampling stability for continuous paths}\label{sampling-stability-for-continuous-paths}

In this section, we show that the persistence diagram of the Wiener
sausage filtration of a continuous path is stable under time
discretization, with an explicit error bound in terms of the pathwise
modulus of continuity. This gives a rigorous bridge from the continuous
object introduced in Section~\ref{setup-and-preliminaries} to the sampled point clouds used in
computation. The argument has three steps: Hausdorff control of the
sampled range, interleaving of offset filtrations, and stability of
persistence diagrams. We then specialize the resulting bound to Brownian
motion and to diffusions with globally Lipschitz coefficients, and
conclude with a few practical variants.

\subsection{Continuous and sampled offset filtrations}\label{continuous-and-sampled-offset-filtrations}

Let \(X = \left( X_{t} \right)_{0 \leq t \leq T}\) be a continuous path
in \(\mathbb{R}^{d}\), with its range
\(K_{T} \coloneqq X\left( [ 0,T] \right).\)

Fix a partition
\(\pi_{n} = \{ 0 = t_{0} < t_{1} < \cdots < t_{n} = T\}\) of
\([ 0,T]\), and write:
\(\lvert \pi_{n} \rvert \coloneqq \max_{0 \leq i \leq n - 1}\left( t_{i + 1} - t_{i} \right)\),
for its mesh. The associated sampled point cloud is

\[P_{\pi_{n}} \coloneqq \{ X_{t_{0}},X_{t_{1}},\ldots,X_{t_{n}}\} \subset \mathbb{R}^{d}.\]

We compare the offset filtrations

\[\mathcal{F}\left( K_{T} \right) = (K_{T}^{(r)})_{r \geq 0}\quad\quad\text{and}\quad\quad\mathcal{F}\left( P_{\pi_{n}} \right) = (P_{\pi_{n}}^{(r)})_{r \geq 0}.\]

The relevant path-regularity quantity is the modulus of continuity

\[\omega_{X}(\delta;T) \coloneqq \sup\{ \parallel X_{t} - X_{s} \parallel :0 \leq s,t \leq T,\ |t - s| \leq \delta\},\quad\quad\delta \geq 0.\]

where \(\delta\) is the time scale on which oscillations are measured.

Since \(X\) is continuous on a compact interval,
\(\omega_{X}(\delta;T) \downarrow 0\) as \(\delta \downarrow 0\).

The key observation is that the Hausdorff error of the discretized range
is exactly controlled by this modulus.

\begin{proposition}[Hausdorff control by the modulus of
continuity]
\label{prop:3.1}
For every partition \(\pi_{n}\) of
\(\ [ 0,T]\),

\[d_{H}\left( K_{T},P_{\pi_{n}} \right) \leq \omega_{X}\left( \lvert \pi_{n} \rvert;T \right).\]
\end{proposition}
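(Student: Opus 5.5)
We bound the two one-sided terms in the definition of \(d_{H}\) separately. Write \(\delta \coloneqq \lvert \pi_{n} \rvert\).

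The first term is trivial. Every sample point \(X_{t_{i}}\) lies in \(K_{T}\), so \(P_{\pi_{n}} \subset K_{T}\). Hence \(\dist(p,K_{T}) = 0\) for every \(p \in P_{\pi_{n}}\), and
\[\sup_{p \in P_{\pi_{n}}}\dist(p,K_{T}) = 0.\]

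The second term requires a nearby grid time for each point of the range. Take \(x \in K_{T}\) and write \(x = X_{t}\) for some \(t \in [0,T]\). Since the grid covers \([0,T]\), there is an index \(i\) with \(t \in [t_{i},t_{i+1}]\). Then \(\lvert t - t_{i} \rvert \leq t_{i+1} - t_{i} \leq \delta\). By the definition of the modulus of continuity,
\[\dist(x,P_{\pi_{n}}) \leq \parallel X_{t} - X_{t_{i}} \parallel \leq \omega_{X}(\delta;T).\]
This bound does not depend on \(x\), so taking the supremum over \(x \in K_{T}\) gives
\[\sup_{x \in K_{T}}\dist(x,P_{\pi_{n}}) \leq \omega_{X}(\delta;T).\]

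Taking the maximum of the two one-sided bounds yields \(d_{H}(K_{T},P_{\pi_{n}}) \leq \omega_{X}(\lvert \pi_{n} \rvert;T)\). Both sets are compact, since \(K_{T}\) is the continuous image of a compact interval and \(P_{\pi_{n}}\) is finite, so the Hausdorff distance is well defined.

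No step here is a genuine obstacle. The only point needing care is that every \(t \in [0,T]\) lies within one mesh length of some grid point, and this holds because the partition starts at \(t_{0} = 0\) and ends at \(t_{n} = T\). Taking the left endpoint \(t_{i}\) of the subinterval containing \(t\) gives a distance of at most \(\lvert \pi_{n} \rvert\). Using the closer of the two endpoints would improve this to \(\lvert \pi_{n} \rvert/2\), but that sharpening is not needed for the stated bound.
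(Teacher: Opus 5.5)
Your proof is correct and follows the paper's argument essentially verbatim. The term \(\sup_{p \in P_{\pi_{n}}}\dist(p,K_{T})\) vanishes because \(P_{\pi_{n}} \subset K_{T}\), and each \(x = X_{t}\) lies within \(\lVert X_{t} - X_{t_{i}} \rVert \leq \omega_{X}(\lvert \pi_{n} \rvert;T)\) of the left endpoint \(t_{i}\) of its subinterval; your side remark that the nearer endpoint yields the sharper bound \(\omega_{X}(\lvert \pi_{n} \rvert/2;T)\) is also correct.
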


\begin{proof}
Since \(P_{\pi_{n}} \subset K_{T}\), the reverse Hausdorff
term vanishes, so it is enough to bound

\[\sup_{x \in K_{T}}dist\left( x,P_{\pi_{n}} \right).\]

Let \(x \in K_{T}\). Then \(x = X_{t}\) for some
\(t \in [ 0,T]\). Choose \(i\) such that
\(t \in \left[ t_{i},t_{i + 1} \right]\). Hence
\(\left| t - t_{i} \right| \leq \lvert \pi_{n} \rvert\), and therefore

\[dist\left( x,P_{\pi_{n}} \right) \leq \parallel X_{t} - X_{t_{i}} \parallel \leq \omega_{X}\left( \lvert \pi_{n} \rvert;T \right).\]

Taking the supremum over \(x \in K_{T}\) proves the claim.
\end{proof}

This proposition is the geometric core of the discretization step:
between two sampling times, the continuous path can only move by at most
its modulus of continuity, hence the sampled cloud is Hausdorff-close to
the full range.

\subsection{Interleaving of offset filtrations}\label{interleaving-of-offset-filtrations}

The next step is deterministic and purely geometric.

\begin{lemma}[offset interleaving]
\label{lem:3.2}
Let \(A,B \subset \mathbb{R}^{d}\) be nonempty compact sets
and suppose

\[d_{H}(A,B) \leq \varepsilon.\]

Then for every \(r \geq 0\),

\[A^{(r)} \subset B^{(r + \varepsilon)}\quad\quad\text{and}\quad\quad B^{(r)} \subset A^{(r + \varepsilon)}.\]
\end{lemma}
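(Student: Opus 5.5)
The plan is to prove the first inclusion directly from the definitions, using the triangle inequality for the distance-to-a-set function. The second inclusion then follows by exchanging the roles of \(A\) and \(B\), since the hypothesis \(d_{H}(A,B) \leq \varepsilon\) is symmetric.

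Fix \(r \geq 0\) and \(x \in A^{(r)}\), so that \(\dist(x,A) \leq r\). Because \(A\) is compact and \(a \mapsto \lVert x - a \rVert\) is continuous, the infimum defining \(\dist(x,A)\) is attained at some \(a \in A\) with \(\lVert x - a \rVert \leq r\). By the definition of the Hausdorff distance, \(\dist(a,B) \leq \sup_{a' \in A}\dist(a',B) \leq d_{H}(A,B) \leq \varepsilon\). Since \(B\) is compact, this distance is also attained at some \(b \in B\) with \(\lVert a - b \rVert \leq \varepsilon\). The triangle inequality then gives
\[\dist(x,B) \leq \lVert x - b \rVert \leq \lVert x - a \rVert + \lVert a - b \rVert \leq r + \varepsilon,\]
so \(x \in B^{(r+\varepsilon)}\).

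A slicker alternative is to invoke the identity \(\lVert d_{A} - d_{B} \rVert_{\infty} = d_{H}(A,B)\) recorded in Section~\ref{offsets-of-compact-sets-and-wiener-sausage-filtrations}. It gives \(d_{B}(x) \leq d_{A}(x) + \varepsilon \leq r + \varepsilon\) for every \(x\) with \(d_{A}(x) \leq r\), and both inclusions follow at once because \(A^{(r)}\) and \(B^{(r)}\) are sublevel sets of \(d_{A}\) and \(d_{B}\). I would present the elementary argument above and mention this as a remark.

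There is no real obstacle here. The only point needing care is that the infima are attained, which compactness guarantees. Even without compactness one could use near-minimizers with an arbitrary slack \(\eta > 0\) and let \(\eta \to 0\), using the fact that the offsets are closed sets.
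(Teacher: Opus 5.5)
Your proof is correct and is essentially the paper's argument: pick $a \in A$ with $\lVert x-a\rVert \le r$, pick $b \in B$ with $\lVert a-b\rVert \le \varepsilon$, apply the triangle inequality, and get the second inclusion by symmetry. Your added remark that compactness guarantees the infima are attained makes explicit a step the paper uses without comment.
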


\begin{proof}
We prove the first inclusion; the second is symmetric. Let
\(x \in A^{(r)}\). Then there exists \(a \in A\) such that
\(\parallel x - a \parallel \leq r\). Since
\(d_{H}(A,B) \leq \varepsilon\), there exists \(b \in B\) with
\(\parallel a - b \parallel \leq \varepsilon\). Hence

\[\parallel x - b \parallel \leq \parallel x - a \parallel + \parallel a - b \parallel \leq r + \varepsilon,\]

so \(x \in B^{(r + \varepsilon)}\).
\end{proof}

\begin{corollary}
\label{cor:3.3}
The filtrations \(\mathcal{F}\left( K_{T} \right)\) and
\(\mathcal{F}\left( P_{\pi_{n}} \right)\) are
\(\omega_{X}\left( \lvert \pi_{n} \rvert;T \right)\)-interleaved.
\end{corollary}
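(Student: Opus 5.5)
The plan is to combine Proposition~\ref{prop:3.1} with Lemma~\ref{lem:3.2}, and then check that the resulting inclusions form an interleaving in the sense of persistence modules.

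Set
\[\varepsilon \coloneqq \omega_{X}\left( \lvert \pi_{n} \rvert;T \right).\]
Both \(K_{T}\) and \(P_{\pi_{n}}\) are nonempty and compact. The range \(K_{T}\) is compact as the continuous image of \([0,T]\), and \(P_{\pi_{n}}\) is a finite set. Proposition~\ref{prop:3.1} gives \(d_{H}(K_{T},P_{\pi_{n}}) \leq \varepsilon\). Lemma~\ref{lem:3.2} then yields, for every \(r \geq 0\),
\[K_{T}^{(r)} \subset P_{\pi_{n}}^{(r+\varepsilon)} \quad\text{and}\quad P_{\pi_{n}}^{(r)} \subset K_{T}^{(r+\varepsilon)}.\]

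Next I verify the interleaving axioms at the level of spaces. Write \(\phi_{r}\colon K_{T}^{(r)} \hookrightarrow P_{\pi_{n}}^{(r+\varepsilon)}\) and \(\psi_{r}\colon P_{\pi_{n}}^{(r)} \hookrightarrow K_{T}^{(r+\varepsilon)}\) for these inclusions.

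\emph{Naturality.} For \(r \leq s\), the two routes from \(K_{T}^{(r)}\) to \(P_{\pi_{n}}^{(s+\varepsilon)}\) are both the inclusion of subsets of \(\mathbb{R}^{d}\), so the square commutes on the nose. The same holds for \(\psi\).

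\emph{Round trips.} The composite \(\psi_{r+\varepsilon}\circ\phi_{r}\) is the inclusion \(K_{T}^{(r)} \hookrightarrow K_{T}^{(r+2\varepsilon)}\), which is exactly the internal structure map of \(\mathcal{F}(K_{T})\). Symmetrically, \(\phi_{r+\varepsilon}\circ\psi_{r}\) is the structure map \(P_{\pi_{n}}^{(r)} \hookrightarrow P_{\pi_{n}}^{(r+2\varepsilon)}\).

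Applying the functor \(H_{q}(\,\cdot\,;\mathbb{k})\) preserves all these commutative diagrams. This gives an \(\varepsilon\)-interleaving of the persistence modules in every degree \(q\), including reduced homology in degree \(0\).

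There is one degenerate case. If \(\varepsilon = 0\), that is, if the path is constant on every sampling interval, then \(K_{T} = P_{\pi_{n}}\) and the statement is trivial.

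No step here is a genuine obstacle. The only point needing care is that all maps are honest set inclusions, so commutativity is automatic and no homotopy-commutativity issues arise.
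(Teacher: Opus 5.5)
Your proposal is correct and is the same argument as the paper's: the paper applies Lemma~\ref{lem:3.2} with \(\varepsilon = \omega_{X}(\lvert \pi_{n} \rvert;T)\), takes the Hausdorff hypothesis from Proposition~\ref{prop:3.1}, and leaves implicit the verification of the interleaving axioms via set inclusions that you spell out. Your separate treatment of \(\varepsilon = 0\) is harmless but unnecessary, since Lemma~\ref{lem:3.2} and the inclusion argument work unchanged in that case.
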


\begin{proof}
Apply Lemma~\ref{lem:3.2} with \(A = K_{T}\), \(B = P_{\pi_{n}}\),
and \(\varepsilon = \omega_{X}\left( \lvert \pi_{n} \rvert;T \right)\)
\end{proof}

\subsection{The sampling theorem}\label{the-sampling-theorem}

Recall from Section~\ref{setup-and-preliminaries} that persistence diagrams of offset filtrations
are well defined for compact sets, and that bottleneck stability holds
with respect to Hausdorff perturbations. Combining Corollary~\ref{cor:3.3} with
that stability result yields the main theorem of this section.

\begin{theorem}[sampling stability for continuous
paths]
\label{thm:3.4}
Let \(X = \left( X_{t} \right)_{0 \leq t \leq T}\) be a
continuous path in \(\mathbb{R}^{d}\), let \(\pi_{n}\) be
a partition of \([ 0,T]\), and let
\(q \geq 0\). Then

\[d_{B}\left( {Dgm}_{q}\left( K_{T} \right),{Dgm}_{q}\left( P_{\pi_{n}} \right) \right) \leq d_{H}\left( K_{T},P_{\pi_{n}} \right) \leq \omega_{X}\left( \lvert \pi_{n} \rvert;T \right).\]
\end{theorem}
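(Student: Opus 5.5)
The proof is a direct concatenation of two earlier results, so the plan is short. Both $K_{T}$ and $P_{\pi_{n}}$ are nonempty compact subsets of $\mathbb{R}^{d}$. $K_{T}$ is the continuous image of the compact interval $[0,T]$, and $P_{\pi_{n}}$ is a finite set. Proposition~\ref{prop:2.1} therefore applies to both offset filtrations, so ${Dgm}_{q}(K_{T})$ and ${Dgm}_{q}(P_{\pi_{n}})$ are well defined for every $q \geq 0$. This includes $q=0$ with the reduced-homology convention fixed in Section~\ref{setup-and-preliminaries}.

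For the first inequality I will apply Theorem~\ref{thm:2.2} with $A = K_{T}$ and $B = P_{\pi_{n}}$. This gives
\[
d_{B}\bigl({Dgm}_{q}(K_{T}),{Dgm}_{q}(P_{\pi_{n}})\bigr) \leq d_{H}(K_{T},P_{\pi_{n}})
\]
directly. Alternatively, one could argue through Corollary~\ref{cor:3.3}: the inclusions from Lemma~\ref{lem:3.2} are compatible with the inclusion maps of each filtration, so they induce an $\varepsilon$-interleaving of the persistence modules $H_{q}(K_{T}^{(\cdot)})$ and $H_{q}(P_{\pi_{n}}^{(\cdot)})$. The algebraic stability theorem for $q$-tame modules \cite{ref7,ref8} then bounds the bottleneck distance by $\varepsilon$. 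Either route works, and I will present the first one, noting the second as the conceptual reason.

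For the second inequality I will quote Proposition~\ref{prop:3.1}. Chaining the two bounds finishes the proof.

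The only point requiring any care is the identity $\lVert d_{A} - d_{B}\rVert_{\infty} = d_{H}(A,B)$ used inside Theorem~\ref{thm:2.2}, which we may take as given. A second minor check is that the interleaving convention (radius shift by $\varepsilon$, bottleneck in the $\ell^{\infty}$ norm on the birth--death plane) matches the normalization of the stability theorem, so that no factor of $2$ appears. Since the filtrations are parametrized by radius rather than diameter, the shift is exactly $\varepsilon$ and the constant is $1$. I do not expect a genuine obstacle here.
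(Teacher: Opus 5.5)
Your proposal is correct and is exactly the paper's proof: the first inequality is Theorem~\ref{thm:2.2} applied to the compact sets $K_{T}$ and $P_{\pi_{n}}$, and the second is Proposition~\ref{prop:3.1}. Your alternative route through the interleaving of Corollary~\ref{cor:3.3} and algebraic stability is also sound, and you are right that the radius parametrization introduces no factor of $2$.
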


\begin{proof}
The second inequality is Proposition~\ref{prop:3.1}. The first
follows from the stability theorem for persistence diagrams of offset
filtrations applied to the compact sets \(K_{T}\) and \(P_{\pi_{n}}\).
\end{proof}

Thus the barcode error is bounded by a single pathwise quantity. In
particular, if \(\lvert \pi_{n} \rvert \rightarrow 0\), then for every
fixed \(T\) and every degree \(q\),

\[d_{B}\left( {Dgm}_{q}\left( K_{T} \right),{Dgm}_{q}\left( P_{\pi_{n}} \right) \right) \rightarrow 0.\]

The theorem is deterministic once the path is fixed. When \(X\) is
random, all probabilistic rates reduce to estimates on
\(\omega_{X}(\delta;T)\).

A particularly important case is uniform sampling. Denote the
corresponding partition by \(\pi_{n}\). Then
\(\lvert \pi_{n} \rvert = T/n\), so Theorem~\ref{thm:3.4} gives

\[d_{B}\left( {Dgm}_{q}\left( K_{T} \right),{Dgm}_{q}\left( P_{\pi_{n}} \right) \right) \leq \omega_{X}(T/n;T).\]

All explicit rates below are obtained by substituting into this bound an
appropriate modulus estimate for the process under consideration.

\subsection{Brownian motion: the Lévy-modulus rate}\label{brownian-motion-the-luxe9vy-modulus-rate}

We first specialize to Brownian motion. Let
\(B = \left( B_{t} \right)_{0 \leq t \leq T}\) be standard Brownian
motion in \(\mathbb{R}^{d}\). For each fixed \(T > 0\), Lévy's modulus
of continuity implies that almost surely

\[\limsup_{\delta \downarrow 0}\frac{\omega_{B}(\delta;T)}{\sqrt{2\delta\log(1/\delta)}} = 1.\]

Consequently, along any sequence \(\delta_{n} \downarrow 0\),

\[\omega_{B}\left( \delta_{n};T \right) = O\left( \sqrt{\delta_{n}\log\left( 1/\delta_{n} \right)} \right)\quad\quad\text{a.s.}\]

Substituting \(\delta_{n} = T/n\) into Theorem~\ref{thm:3.4} yields the following.

\begin{corollary}[Brownian sampling rate]
\label{cor:3.5}
Let \(B\) be standard Brownian motion in
\(\mathbb{R}^{d}\), and let \(P_{\pi_{n}}\) be the uniform
sample cloud on the mesh \(T/n\). Then for every
\(q \geq 0\),

\[d_{B}\left( {Dgm}_{q}\left( B\left( [ 0,T] \right) \right),{Dgm}_{q}\left( P_{\pi_{n}} \right) \right) = O\left( \sqrt{\frac{T}{n}\log n} \right)\quad\quad\text{a.s.}\]

Equivalently,

\[d_{B}\left( {Dgm}_{q}\left( B\left( [ 0,T] \right) \right),{Dgm}_{q}\left( P_{\pi_{n}} \right) \right) = O\left( \sqrt{\lvert \pi_{n} \rvert\log\left( 1/\lvert \pi_{n} \rvert \right)} \right)\quad\quad\text{a.s.}\]
\end{corollary}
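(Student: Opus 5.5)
The plan is to combine the deterministic bound of Theorem~\ref{thm:3.4} with Lévy's modulus of continuity, pathwise. First fix $T>0$ and work on the almost-sure event $\Omega_T$ on which Lévy's modulus holds on $[0,T]$:
\[\limsup_{\delta \downarrow 0}\frac{\omega_{B}(\delta;T)}{\sqrt{2\delta\log(1/\delta)}} = 1.\]
For $d\ge 2$ this follows from the one-dimensional statement applied coordinatewise. Since $\lVert x\rVert \le \sqrt{d}\max_j |x_j|$, we get $\omega_B \le \sqrt{d}\max_j \omega_{B^{(j)}}$. The limsup is therefore finite (at most $\sqrt{d}$), and a finite constant is all that is needed for an $O(\cdot)$ statement. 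The only subtlety is that Lévy's theorem is usually stated on $[0,1]$. Brownian scaling $B_{Tu}\overset{d}{=}\sqrt{T}B_u$ transfers it to $[0,T]$, at the cost of constants depending on $T$, which is fixed.

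On $\Omega_T$, fix $\omega$. There is then $\delta_0(\omega)>0$ such that $\omega_B(\delta;T)\le 2\sqrt{d}\sqrt{2\delta\log(1/\delta)}$ for all $\delta<\delta_0$. For the uniform partition the mesh is $|\pi_n|=T/n$, so for $n>T/\delta_0$ Theorem~\ref{thm:3.4} gives
\[d_{B}\bigl({Dgm}_{q}(B([0,T])),{Dgm}_{q}(P_{\pi_{n}})\bigr) \le \omega_B(T/n;T) \le C\sqrt{\tfrac{T}{n}\log\tfrac{n}{T}}.\]
This inequality is deterministic given the path, and it holds for every $q\ge 0$. Hence no union bound over $q$ is needed.

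Two equivalences finish the argument. Since $\log(n/T)\le \log n$ for $T\ge1$, and $\log(n/T)\sim\log n$ in general, the bound becomes $O(\sqrt{(T/n)\log n})$. With $|\pi_n|=T/n$ one has $\log(1/|\pi_n|)=\log n-\log T$, so the two displayed forms in Corollary~\ref{cor:3.5} coincide asymptotically.

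There is no real obstacle here. The step needing the most care is the one already handled: stating Lévy's modulus on $[0,T]$ in $\mathbb{R}^d$ with an explicit constant. This amounts to the scaling and coordinatewise reductions above, plus the observation that finitely many initial $n$ do not affect an $O(\cdot)$ claim.
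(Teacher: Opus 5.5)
Your proposal is correct and follows the same route as the paper: insert the Lévy modulus-of-continuity bound into the deterministic estimate of Theorem~\ref{thm:3.4}, with $\lvert \pi_{n} \rvert = T/n$. Your extra steps are only details the paper leaves implicit, and a finite constant is all the $O(\cdot)$ claim needs. These steps are the coordinatewise reduction, which gives a constant of at most $\sqrt{d}$ rather than the sharp constant $1$ the paper quotes in $\mathbb{R}^{d}$; the scaling from $[0,1]$ to $[0,T]$; and the comparison of $\log(n/T)$ with $\log n$.
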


\begin{proof}
Substitute \(\delta = T/n\) into Theorem~\ref{thm:3.4} and use the
Lévy-modulus estimate above.
\end{proof}

This is the natural rate for Brownian sampling: the persistence
approximation error is of the same order as the largest Brownian
fluctuation missed between two successive observation times.

\subsection{Diffusions with globally Lipschitz coefficients}\label{diffusions-with-globally-lipschitz-coefficients}

We next consider a \(d\)-dimensional Itô diffusion

\[dX_{t} = b\left( X_{t} \right)\, dt + \sigma\left( X_{t} \right)\, dW_{t},\quad\quad 0 \leq t \leq T,\]

where \(b:\mathbb{R}^{d} \rightarrow \mathbb{R}^{d}\) and
\(\sigma:\mathbb{R}^{d} \rightarrow \mathbb{R}^{d \times m}\) are
globally Lipschitz. Standard moment estimates imply that for each
\(p \geq 2\) there exists \(C_{p,T} < \infty\) such that

\[\mathbb{E}\left[ \lVert X_{t} - X_{s} \rVert^{p} \right] \leq C_{p,T}|t - s|^{p/2},\quad\quad 0 \leq s,t \leq T.\]

By Kolmogorov continuity, for every \(\alpha \in (0,1/2)\) there exists
a finite random variable \(C_{\alpha,T}\) such that almost surely

\[\parallel X_{t} - X_{s} \parallel \leq C_{\alpha,T}|t - s|^{\alpha},\quad\quad 0 \leq s,t \leq T.\]

Hence

\[\omega_{X}(\delta;T) \leq C_{\alpha,T}\delta^{\alpha}\quad\quad\text{a.s.}\]

for every \(\alpha < 1/2\). Plugging this into Theorem~\ref{thm:3.4} gives the
following.

\begin{corollary}[diffusion Hölder rates]
\label{cor:3.6}
Let \(X\) be a diffusion with globally Lipschitz
coefficients. Then for every \(\alpha \in (0,1/2)\), every
\(q \geq 0\), and every sequence of partitions \(\pi_{n}\)
with \(\lvert \pi_{n} \rvert \rightarrow 0\),

\[d_{B}\left( {Dgm}_{q}\left( K_{T} \right),{Dgm}_{q}\left( P_{\pi_{n}} \right) \right) \leq C_{\alpha,T}\lvert \pi_{n} \rvert^{\alpha}\quad\quad\text{a.s.}\]

for some almost surely finite random constant \(C_{\alpha,T}\).
\end{corollary}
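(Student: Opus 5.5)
The plan is to combine the deterministic bound of Theorem~\ref{thm:3.4} with the pathwise Hölder estimate obtained from Kolmogorov's criterion. The statement then follows almost immediately once the Hölder constant is shown to be almost surely finite and independent of the partition.

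\textbf{Step 1: moment bound.} Fix \(\alpha \in (0,1/2)\) and choose \(p > 2\) large enough that \(\alpha < \tfrac{1}{2} - \tfrac{1}{p}\). For \(s \le t\), write
\[
X_t - X_s = \int_s^t b(X_u)\,du + \int_s^t \sigma(X_u)\,dW_u .
\]
Bound the drift term by Hölder's inequality. Bound the stochastic integral by the Burkholder--Davis--Gundy inequality. Global Lipschitz continuity gives linear growth, and the standard bound \(\sup_{u\le T}\mathbb{E}\lVert X_u\rVert^p<\infty\) (Grönwall, assuming a deterministic or \(L^p\) initial condition) then yields
\[
\mathbb{E}\lVert X_t-X_s\rVert^p \le C_{p,T}\,|t-s|^{p/2}.
\]
This is the estimate recorded before the statement.

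\textbf{Step 2: Kolmogorov--Chentsov.} Apply the criterion with exponent \(1+\epsilon = p/2\). Since the path is already continuous, it coincides with its Hölder modification. This gives an a.s.\ finite random variable \(C_{\alpha,T}\), in fact with \(\mathbb{E}\,C_{\alpha,T}^p<\infty\), such that
\[
\lVert X_t-X_s\rVert \le C_{\alpha,T}\,|t-s|^\alpha \quad \text{for all } s,t\in[0,T].
\]
The null set depends only on \(\alpha\), through \(p\), and not on any partition. Hence \(\omega_X(\delta;T)\le C_{\alpha,T}\,\delta^\alpha\) for every \(\delta \ge 0\), simultaneously, on a single full-measure event.

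\textbf{Step 3: conclusion.} On that event, Theorem~\ref{thm:3.4} applied to each \(\pi_n\) gives
\[
d_B\bigl(\mathrm{Dgm}_q(K_T),\mathrm{Dgm}_q(P_{\pi_n})\bigr) \le \omega_X(|\pi_n|;T) \le C_{\alpha,T}\,|\pi_n|^\alpha
\]
for every \(n\) and every \(q\). The hypothesis \(|\pi_n|\to 0\) is not actually needed; it only makes the bound meaningful. If one wants a single event valid for all \(\alpha\) at once, intersect over rational \(\alpha_k \uparrow 1/2\) and use monotonicity of \(\delta^\alpha\) for \(\delta \le 1\), adjusting constants when \(T>1\).

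\textbf{Main obstacle.} The only real work is Step 1, the \(L^p\) increment bound with a constant uniform in \(s,t\in[0,T]\). The key points are controlling \(\sup_u \mathbb{E}\lVert X_u\rVert^p\) via BDG and Grönwall, and ensuring the initial condition has finite \(p\)-th moments. The rest is quoting Kolmogorov and Theorem~\ref{thm:3.4}.
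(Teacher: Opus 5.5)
Your proposal is correct and follows essentially the same route as the paper. Both combine the standard $L^p$ increment bound, Kolmogorov continuity to get an almost surely finite Hölder constant $C_{\alpha,T}$ that does not depend on the partition, the resulting bound $\omega_X(\delta;T)\le C_{\alpha,T}\delta^\alpha$, and substitution into Theorem~\ref{thm:3.4}. Your extra details (choosing $p$ with $\alpha<\tfrac12-\tfrac1p$, the null set being independent of $\pi_n$, and the moment condition on the initial condition) make explicit what the paper leaves as ``standard.''
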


\begin{proof}
Apply Theorem~\ref{thm:3.4} with the almost-sure bound
\(\omega_{X}\left( \lvert \pi_{n} \rvert;T \right) \leq C_{\alpha,T}\lvert \pi_{n} \rvert^{\alpha}.\ \)
\end{proof}

This is the qualitative rate statement we shall need. Sharper
logarithmic refinements are available under more detailed increment
estimates, but the Hölder form is already enough to justify convergence
of the sampled diagrams to the continuous ones for the class of
diffusions considered here.

\subsection{Practical variants and computational remarks}\label{practical-variants-and-computational-remarks}

We record two immediate variants of Theorem~\ref{thm:3.4} and one computational
remark.

Suppose first that instead of the exact samples \(X_{t_{i}}\) we observe

\[Y_{i} = X_{t_{i}} + \xi_{i},\quad\quad i = 0,\ldots,n,\]

and define the noisy sample cloud

\[Q_{\pi} \coloneqq \{ Y_{0},\ldots,Y_{n}\}.\]

Assume that

\[\eta_{\pi} \coloneqq \max_{0 \leq i \leq n} \parallel \xi_{i} \parallel < \infty.\]

\begin{proposition}[sampling stability with observation noise]
\label{prop:3.7}
For every \(q \geq 0\),

\[d_{B}\left( {Dgm}_{q}\left( K_{T} \right),{Dgm}_{q}\left( Q_{\pi_{n}} \right) \right) \leq \omega_{X}\left( \lvert \pi_{n} \rvert;T \right) + \eta_{\pi}.\]
\end{proposition}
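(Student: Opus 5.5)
The plan is to reduce the claim to Theorem~\ref{thm:2.2}, the stability theorem for offsets, by bounding the Hausdorff distance $d_{H}(K_{T},Q_{\pi_{n}})$ and then using the triangle inequality through the exact sample cloud $P_{\pi_{n}}$. Since $d_{H}$ is a metric on nonempty compact subsets of $\mathbb{R}^{d}$, and $Q_{\pi_{n}}$ is a finite (hence compact) set, we have
\[
d_{H}(K_{T},Q_{\pi_{n}}) \leq d_{H}(K_{T},P_{\pi_{n}}) + d_{H}(P_{\pi_{n}},Q_{\pi_{n}}).
\]
The first term is at most $\omega_{X}(\lvert \pi_{n}\rvert;T)$ by Proposition~\ref{prop:3.1}.

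For the second term, I use the natural indexwise correspondence $X_{t_{i}} \leftrightarrow Y_{i}$. For each point $X_{t_{i}}$ of $P_{\pi_{n}}$, we have $\dist(X_{t_{i}},Q_{\pi_{n}}) \leq \lVert X_{t_{i}}-Y_{i}\rVert = \lVert \xi_{i}\rVert \leq \eta_{\pi}$. Symmetrically, each $Y_{i}$ satisfies $\dist(Y_{i},P_{\pi_{n}}) \leq \eta_{\pi}$. Hence $d_{H}(P_{\pi_{n}},Q_{\pi_{n}}) \leq \eta_{\pi}$. Repeated sample points or repeated noisy points cause no difficulty, since both clouds are treated as sets and every point in either set comes with at least one index $i$.

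Combining these bounds gives $d_{H}(K_{T},Q_{\pi_{n}}) \leq \omega_{X}(\lvert \pi_{n}\rvert;T)+\eta_{\pi}$. Theorem~\ref{thm:2.2}, applied to the compact sets $K_{T}$ and $Q_{\pi_{n}}$, then yields the asserted bottleneck bound for every $q\geq 0$.

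An alternative is to chain bottleneck distances, $d_{B}(Dgm_{q}(K_{T}),Dgm_{q}(P_{\pi_{n}}))+d_{B}(Dgm_{q}(P_{\pi_{n}}),Dgm_{q}(Q_{\pi_{n}}))$, using Theorem~\ref{thm:3.4} for the first term. That route requires the triangle inequality for $d_{B}$ on $q$-tame diagrams, which holds but is slightly less elementary, so I prefer the Hausdorff route.

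There is no genuine obstacle here. The only point needing care is that the bound on $d_{H}(P_{\pi_{n}},Q_{\pi_{n}})$ must be verified in both directions. That is immediate because the perturbation is indexwise, and it does not rely on $Q_{\pi_{n}}\subset K_{T}$, which generally fails in the noisy case.
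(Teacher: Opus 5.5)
Your proposal is correct and follows the paper's proof essentially step for step. Like the paper, you use the Hausdorff triangle inequality through $P_{\pi_{n}}$, bound the first term by Proposition~\ref{prop:3.1} and the second by the indexwise estimate $\lVert X_{t_{i}}-Y_{i}\rVert=\lVert \xi_{i}\rVert\leq\eta_{\pi}$, and then conclude with the offset stability Theorem~\ref{thm:2.2} applied to $K_{T}$ and $Q_{\pi_{n}}$.
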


\begin{proof}
For each \(i\),

\[\parallel X_{t_{i}} - Y_{i} \parallel = \parallel \xi_{i} \parallel \leq \eta_{\pi}.\]

Hence every point of \(P_{\pi}\) lies within distance \(\eta_{\pi}\) of
\(Q_{\pi_{n}}\), and every point of \(Q_{\pi_{n}}\) lies within distance
\(\eta_{\pi}\) of \(P_{\pi_{n}}\). Therefore

\[d_{H}\left( P_{\pi},Q_{\pi} \right) \leq \eta_{\pi}.\]

By the triangle inequality for Hausdorff distance and Proposition~\ref{prop:3.1},

\[d_{H}\left( K_{T},Q_{\pi_{n}} \right) \leq d_{H}\left( K_{T},P_{\pi_{n}} \right) + d_{H}\left( P_{\pi_{n}},Q_{\pi_{n}} \right) \leq \omega_{X}\left( \lvert \pi_{n} \rvert;T \right) + \eta_{\pi}.\]

Applying the stability theorem for persistence diagrams of offset
filtrations to the compact sets \(K_{T}\) and \(Q_{\pi_{n}}\) yields

\[d_{B}\left( {Dgm}_{q}\left( K_{T} \right),{Dgm}_{q}\left( Q_{\pi_{n}} \right) \right) \leq d_{H}\left( K_{T},Q_{\pi_{n}} \right) \leq \omega_{X}\left( \lvert \pi_{n} \rvert;T \right) + \eta_{\pi}.\]
\end{proof}

Thus discretization error and observation error simply add at the level
of bottleneck distance.

For small radii, a point cloud may be a visually poor approximation of
the continuous sausage, even when the Hausdorff distance is controlled.
A simple alternative is to replace the sampled cloud by the image of the
piecewise linear interpolation between successive sample points.

Let
\({\widetilde{X}}^{\pi}:[ 0,T] \rightarrow \mathbb{R}^{d}\)
be the polygonal interpolation of the samples \(X_{t_{i}}\), and let

\[L_{\pi} \coloneqq {\widetilde{X}}^{\pi}\left( [ 0,T] \right)\]

be its image.

\begin{proposition}[polygonal interpolation]
\label{prop:3.8}
For every \(q \geq 0\),

\[d_{B}\left( {Dgm}_{q}\left( K_{T} \right),{Dgm}_{q}\left( L_{\pi_{n}} \right) \right) \leq \omega_{X}\left( \lvert \pi_{n} \rvert;T \right).\]
\end{proposition}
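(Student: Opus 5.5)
The plan is to reduce the statement to Theorem~\ref{thm:2.2}, exactly as in the proof of Theorem~\ref{thm:3.4}: it suffices to prove the Hausdorff bound
\[d_{H}\left( K_{T},L_{\pi_{n}} \right) \leq \omega_{X}\left( \lvert \pi_{n} \rvert;T \right).\]
Both sets are compact: $K_T$ is the continuous image of $[0,T]$, and $L_{\pi_n}$ is a finite union of segments. Write $\varepsilon \coloneqq \omega_{X}(\lvert \pi_{n} \rvert;T)$. The Hausdorff distance involves two one-sided terms, and I will bound each by $\varepsilon$.

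\emph{From the path to the polygon.} Since $P_{\pi_n} \subset L_{\pi_n}$, we have $\dist(x, L_{\pi_n}) \le \dist(x,P_{\pi_n})$ for every $x$. Proposition~\ref{prop:3.1} therefore gives $\sup_{x \in K_T} \dist(x,L_{\pi_n}) \le \varepsilon$ directly.

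\emph{From the polygon to the path.} This is the only genuinely new step, because $L_{\pi_n}$ is not contained in $K_T$. Take a point $y$ of $L_{\pi_n}$ and write it as $y = (1-\lambda) X_{t_i} + \lambda X_{t_{i+1}}$ with $\lambda \in [0,1]$. Both endpoints lie in $K_T$. Moreover $\lVert X_{t_{i+1}} - X_{t_i} \rVert \le \varepsilon$, since $t_{i+1} - t_i \le \lvert \pi_n \rvert$. Every point of the segment lies within $\min(\lambda,1-\lambda)\,\lVert X_{t_{i+1}} - X_{t_i}\rVert \le \varepsilon/2$ of one of the endpoints. Hence $\dist(y,K_T) \le \varepsilon/2 \le \varepsilon$. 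The only care needed is to use the modulus at the mesh scale for consecutive grid points, which is immediate from the definition of $\omega_X$.

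Combining the two one-sided bounds gives $d_H(K_T,L_{\pi_n}) \le \varepsilon$. Theorem~\ref{thm:2.2}, applied to the compact sets $K_T$ and $L_{\pi_n}$, then yields the claimed bottleneck bound for every $q \ge 0$. I do not expect a real obstacle. The one point to state explicitly is that Theorem~\ref{thm:2.2} needs only compactness of both sets, not the inclusion $L_{\pi_n} \subset K_T$ that made the reverse Hausdorff term vanish in Proposition~\ref{prop:3.1}. The segment estimate above is exactly what substitutes for that inclusion.
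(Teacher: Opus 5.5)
Your proof is correct and is essentially the paper's argument: you bound both one-sided Hausdorff terms by $\omega_{X}(\lvert \pi_{n} \rvert;T)$ and then apply Theorem~\ref{thm:2.2}. The path-to-polygon direction goes through the segment endpoints, and your use of $P_{\pi_{n}} \subset L_{\pi_{n}}$ with Proposition~\ref{prop:3.1} is the same observation; the polygon-to-path direction uses the same convex-combination estimate, where your $\varepsilon/2$ bound slightly sharpens the paper's $\min\bigl(\lVert y - X_{t_i}\rVert, \lVert y - X_{t_{i+1}}\rVert\bigr) \le \lVert X_{t_{i+1}} - X_{t_i}\rVert$ without changing the final estimate.
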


\begin{proof}
We first show that

\[d_{H}\left( K_{T},L_{\pi_{n}} \right) \leq \omega_{X}\left( \lvert \pi_{n} \rvert;T \right).\]

Let \(x \in K_{T}\). Then \(x = X_{t}\) for some
\(t \in \left[ t_{i},t_{i + 1} \right]\). The segment
joining \(X_{t_{i}}\) to \(X_{t_{i + 1}}\) is contained in
\(L_{\pi_{n}}\). Since \(X_{t}\) is within distance

\[\max\left( \parallel X_{t} - X_{t_{i}} \parallel , \parallel X_{t} - X_{t_{i + 1}} \parallel \right)\]

of that segment, and both time differences are at most
\(\lvert \pi_{n} \rvert\), we obtain

\[dist\left( x,L_{\pi_{n}} \right) \leq \omega_{X}\left( \lvert \pi_{n} \rvert;T \right).\]

Conversely, let \(y \in L_{\pi_{n}}\). Then \(y\) lies on the segment
joining \(X_{t_{i}}\) to \(X_{t_{i + 1}}\) for some \(i\). Hence \(y\)
is a convex combination of \(X_{t_{i}}\) and \(X_{t_{i + 1}}\), so

\[dist\left( y,K_{T} \right) \leq min\left( \parallel y - X_{t_{i}} \parallel , \parallel y - X_{t_{i + 1}} \parallel \right) \leq \parallel X_{t_{i + 1}} - X_{t_{i}} \parallel \leq \omega_{X}\left( \lvert \pi_{n} \rvert;T \right).\]

Therefore

\[d_{H}\left( K_{T},L_{\pi_{n}} \right) \leq \omega_{X}\left( \lvert \pi_{n} \rvert;T \right).\]

Applying the stability theorem for persistence diagrams of offset
filtrations to the compact sets \(K_{T}\) and \(L_{\pi_{n}}\) gives

\[d_{B}\left( {Dgm}_{q}\left( K_{T} \right),{Dgm}_{q}\left( L_{\pi_{n}} \right) \right) \leq d_{H}\left( K_{T},L_{\pi_{n}} \right) \leq \omega_{X}\left( \lvert \pi_{n} \rvert;T \right),\]

which proves the proposition.
\end{proof}

The worst-case bottleneck bound is therefore identical to that of the
point-cloud approximation. The advantage of polygonal interpolation is
geometric rather than asymptotic: for a fixed mesh, it typically gives a
better visual and computational proxy for thin sausages.

\begin{remark}[Čech versus Vietoris--Rips persistence]
\label{rem:3.9}
For a finite sample cloud \(P_{\pi_{n}}\), the offset filtration
\(\left( P_{\pi_{n}}^{(r)} \right)_{r \geq 0}\) is geometrically the
union-of-balls filtration. By the Nerve Theorem (\cite{ref12}), its homology
agrees with that of the Čech filtration built on \(P_{\pi_{n}}\). Thus
Čech persistence is the exact combinatorial model for the sampled Wiener
sausage.

In computation, however, one often uses Vietoris--Rips complexes.
Classical inclusions yield a multiplicative interleaving between Čech
and Rips filtrations; the precise constant depends on the chosen scale
convention and on whether one parametrizes by ball radius or pairwise
distance threshold. Thus, when Rips complexes are used in place of Čech
complexes, there are two distinct approximation steps: the sampling
error, controlled additively in the filtration parameter by Theorem~\ref{thm:3.4},
and the Čech--Rips model error, controlled multiplicatively by the
standard Čech--Rips interleaving. Since our theoretical object is the
sausage filtration itself, Čech persistence is the exact discrete
surrogate, whereas Rips persistence is a computational surrogate with an
additional, standard distortion.
\end{remark}

\section{Regeneration structure for drifted planar Brownian motion}\label{regeneration-structure-for-drifted-planar-brownian-motion}

In this section we construct the regeneration structure for planar
Brownian motion with nonzero drift. This is the probabilistic engine
behind the LLN proved in Section~\ref{law-of-large-numbers-for-persistence-intensity}. The basic idea is simple: after
projecting the process onto the drift direction, one obtains a
one-dimensional Brownian motion with positive drift. Such a process
reaches higher and higher levels, and after a level hit there is a
strictly positive probability that it never backtracks by more than a
prescribed amount. These ``good cuts'' produce regeneration times. The
corresponding path segments are independent and identically distributed
after spatial recentering, and their lengths have finite mean and even
small exponential moments. This is exactly the structure needed for a
renewal-reward argument later on.

Throughout this section, let

\[X_{t} = x_{0} + \mu t + B_{t},\quad\quad t \geq 0,\]

where \(B\) is standard Brownian motion in \(\mathbb{R}^{2}\) and
\(\mu \neq 0\). Write

\[\nu \coloneqq \lVert \mu \rVert ,\quad\quad e \coloneqq \frac{\mu}{\lVert \mu \rVert},\]

and choose \(e^{\perp}\) so that \(\left( e,e^{\perp} \right)\) is an
orthonormal basis of \(\mathbb{R}^{2}\). The longitudinal and transverse
coordinates are

\[U_{t} \coloneqq \langle X_{t},e\rangle,\quad\quad V_{t} \coloneqq \langle X_{t},e^{\perp}\rangle.\]

Then

\(U_{t} = U_{0} + \nu t + B_{t}^{(1)},\)
\(V_{t} = V_{0} + B_{t}^{(2)},\)

where \(B^{(1)}\) and \(B^{(2)}\) are independent one-dimensional
standard Brownian motions. In particular, \(U\) is a one-dimensional
Brownian motion with positive drift \(\nu\), while \(V\) is an
independent driftless Brownian motion. Thus all regeneration statements
reduce to one-dimensional facts about Brownian motion with positive
drift \(\nu\).

We fix two positive parameters throughout the section:

\begin{itemize}[leftmargin=2em]
\item a step size \(L > 0\), which determines the spacing between candidate cutting levels;
\item a buffer \(R > 0\), which determines how much backtracking is allowed after a cut.
\end{itemize}

No topological argument enters here; the point is only to produce a
clean i.i.d. decomposition and the corresponding moment bounds.

\subsection{One-dimensional hitting times for Brownian motion with drift}\label{one-dimensional-hitting-times-for-brownian-motion-with-drift}

We collect the one-dimensional facts about Brownian motion with positive
drift that will be used repeatedly. Let

\[Z_{t} \coloneqq \nu t + B_{t},\quad\quad t \geq 0,\]

where \(\nu > 0\) and \(B\) is a standard one-dimensional Brownian
motion. For \(a > 0\), define

\[\sigma_{a}^{+} \coloneqq \inf\{ t \geq 0:Z_{t} = a\},\quad\quad\sigma_{a}^{-} \coloneqq \inf\{ t \geq 0:Z_{t} = - a\}.\]

The following formulas are standard; they may be obtained, for example,
by solving the associated boundary-value problems for the generator
\(\frac{1}{2}\partial_{xx} + \nu\partial_{x}\).

\begin{lemma}[standard hitting-time formulas]
\label{lem:4.1}
\emph{For every} \(a > 0\) \emph{and every} \(\lambda \geq 0\)\emph{,}

\begin{equation}\tag{4.1}\label{eq:4.1}
\mathbb{E}\left[ e^{- \lambda\sigma_{a}^{+}} \right] = \exp\left( - a\left( \sqrt{\nu^{2} + 2\lambda} - \nu \right) \right),
\end{equation}

\emph{and}

\begin{equation}\tag{4.2}\label{eq:4.2}
\mathbb{E}\left[ e^{- \lambda\sigma_{a}^{-}};\sigma_{a}^{-} < \infty \right] = \exp\left( - a\left( \sqrt{\nu^{2} + 2\lambda} + \nu \right) \right).
\end{equation}

\emph{In particular,}

\[\mathbb{P}\left( \sigma_{a}^{-} < \infty \right) = e^{- 2\nu a},\quad\quad\mathbb{E}\left[ \sigma_{a}^{+} \right] = \frac{a}{\nu},\]

\emph{and, conditionally on} \(\{\sigma_{a}^{-} < \infty\}\)\emph{,}

\[\mathbb{E}\left[ \sigma_{a}^{-}\,|\,\sigma_{a}^{-} < \infty \right] = \frac{a}{\nu}.\]

\emph{Moreover, for every} \(0 \leq \theta < \nu^{2}/2\)\emph{,}

\[\mathbb{E}\left[ e^{\theta\sigma_{a}^{+}} \right] = \exp\left( a\left( \nu - \sqrt{\nu^{2} - 2\theta} \right) \right) < \infty.\]

A direct consequence is the probability of a successful cut.
\end{lemma}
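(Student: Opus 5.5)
The plan is to derive both Laplace transforms from a single exponential martingale and optional stopping, and then read off the remaining identities by specialization, differentiation, and analytic continuation. For \(\gamma \in \mathbb{R}\), the process
\[M_{t}^{\gamma} \coloneqq \exp\left( \gamma Z_{t} - \left( \gamma\nu + \tfrac{1}{2}\gamma^{2} \right)t \right)\]
is a martingale, since \(\gamma B_{t} - \frac{1}{2}\gamma^{2}t\) is the exponent of the standard exponential martingale. Given \(\lambda \geq 0\), I choose \(\gamma\) to solve \(\gamma\nu + \frac{1}{2}\gamma^{2} = \lambda\). The roots are \(\gamma_{\pm} = -\nu \pm \sqrt{\nu^{2} + 2\lambda}\), with \(\gamma_{+} \geq 0\) and \(\gamma_{-} \leq -2\nu < 0\).

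For \eqref{eq:4.1}, I use \(\gamma = \gamma_{+}\). First, \(\sigma_{a}^{+} < \infty\) a.s., because \(Z_{t} \to \infty\) a.s. by the strong law \(B_{t}/t \to 0\). On \([0, \sigma_{a}^{+}]\) we have \(Z \leq a\), so the stopped martingale \(M_{t \wedge \sigma_{a}^{+}}^{\gamma_{+}}\) is bounded by \(e^{\gamma_{+}a}\). Optional stopping with bounded convergence then gives \(e^{\gamma_{+}a}\,\mathbb{E}[e^{-\lambda\sigma_{a}^{+}}] = 1\), which is \eqref{eq:4.1}.

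For \eqref{eq:4.2}, I use \(\gamma = \gamma_{-} < 0\). On \([0, \sigma_{a}^{-}]\) we have \(Z \geq -a\), so \(M_{t \wedge \sigma_{a}^{-}}^{\gamma_{-}} \leq e^{-\gamma_{-}a}\). On \(\{\sigma_{a}^{-} = \infty\}\) the martingale tends to \(0\) as \(t \to \infty\). When \(\lambda > 0\) this is clear from the factor \(e^{-\lambda t}\), since the other factor stays bounded. When \(\lambda = 0\) we have \(\gamma_{-} = -2\nu\) and \(M_{t} = e^{-2\nu Z_{t}} \to 0\) because \(Z_{t} \to \infty\). Bounded convergence therefore gives \(e^{-\gamma_{-}a}\,\mathbb{E}[e^{-\lambda\sigma_{a}^{-}}; \sigma_{a}^{-} < \infty] = 1\), which is \eqref{eq:4.2}. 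This \(\lambda = 0\) limit on the event of never hitting is the one delicate point, and I expect it to be the main obstacle, such as it is. An alternative route is to stop at the two-sided exit time from \((-a, b)\) and let \(b \to \infty\).

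The corollaries follow in turn.
\begin{itemize}[leftmargin=2em]
\item Setting \(\lambda = 0\) in \eqref{eq:4.2} gives \(\mathbb{P}(\sigma_{a}^{-} < \infty) = e^{-2\nu a}\).
\item Differentiating \eqref{eq:4.1} at \(\lambda = 0^{+}\) gives \(\mathbb{E}[\sigma_{a}^{+}] = a \cdot \frac{d}{d\lambda}\sqrt{\nu^{2} + 2\lambda}\,\big|_{0} = a/\nu\). Monotone convergence justifies this, since \((1 - e^{-\lambda s})/\lambda \uparrow s\) as \(\lambda \downarrow 0\).
\item Differentiating \eqref{eq:4.2} the same way gives \(\mathbb{E}[\sigma_{a}^{-}; \sigma_{a}^{-} < \infty] = (a/\nu)e^{-2\nu a}\). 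Dividing by \(e^{-2\nu a}\) yields the conditional mean \(a/\nu\).
\end{itemize}

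For the exponential moment, I take \(0 \leq \theta < \nu^{2}/2\) and apply the same argument with \(\lambda = -\theta\). The root \(\gamma = -\nu + \sqrt{\nu^{2} - 2\theta}\) is real and lies in \((-\nu, 0]\). The stopped martingale satisfies \(M_{t \wedge \sigma_{a}^{+}} = \exp(\gamma Z_{t \wedge \sigma_{a}^{+}} + \theta(t \wedge \sigma_{a}^{+}))\), which is no longer obviously bounded. I instead get \(\mathbb{E}[e^{\gamma Z_{t \wedge \sigma}} e^{\theta(t \wedge \sigma)}] = 1\) with \(\sigma = \sigma_{a}^{+}\), and pass to the limit by Fatou's lemma. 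This gives \(\mathbb{E}[e^{\theta\sigma}] \leq e^{-\gamma a}\), so the moment is finite.

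To get equality rather than an upper bound, I use analytic continuation. The function \(\lambda \mapsto \mathbb{E}[e^{-\lambda\sigma}]\) is analytic on \(\{\operatorname{Re}\lambda > -\theta_{0}\}\) for every \(\theta_{0} < \nu^{2}/2\), by the finiteness just shown. The function \(\exp(-a(\sqrt{\nu^{2} + 2\lambda} - \nu))\) is analytic on \(\{\operatorname{Re}\lambda > -\nu^{2}/2\}\). The two agree for \(\lambda \geq 0\), hence on \((-\nu^{2}/2, 0]\), which gives the stated formula \(\exp(a(\nu - \sqrt{\nu^{2} - 2\theta}))\).
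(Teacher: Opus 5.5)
Your argument is correct. The paper does not really prove this lemma: it calls the formulas classical and only notes that they can be obtained from boundary-value problems for the generator $\frac{1}{2}\partial_{xx}+\nu\partial_{x}$.

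Your exponential-martingale route is the probabilistic counterpart of that suggestion, so in substance the two coincide. The function $u(x)=e^{\gamma x}$ solves $\frac{1}{2}u''+\nu u'=\lambda u$ exactly when $\gamma\nu+\frac{1}{2}\gamma^{2}=\lambda$. Optional stopping is then the verification step that identifies the ODE solution with the expectation.

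What your version adds is the justification that a bare ODE computation leaves implicit:
\begin{itemize}
\item The choice of root ($\gamma_{+}$ for upward passage, $\gamma_{-}$ for downward) is forced by boundedness of the stopped martingale.
\item The contribution of $\{\sigma_{a}^{-}=\infty\}$ is shown to vanish, including at $\lambda=0$ via $Z_{t}\to\infty$.
\item The means follow by monotone convergence.
\item For $\theta>0$ both characteristic roots $-\nu\pm\sqrt{\nu^{2}-2\theta}$ are negative, so no solution is bounded as $x\to-\infty$ and the boundary conditions alone do not select the right one. Your two-step argument settles this: finiteness from Fatou, then equality by analytic continuation of the Laplace transform.
\end{itemize}

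The paper's citation, in turn, buys brevity and covers Lemma~\ref{lem:4.1}, Corollary~\ref{cor:4.2} and Lemma~\ref{lem:4.3} at once.
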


\begin{corollary}[probability of no backtracking beyond \(R\)]
\label{cor:4.2}
\emph{For every} \(R > 0\)\emph{,}

\begin{equation}\tag{4.3}\label{eq:4.3}
p_{R} \coloneqq \mathbb{P}\left( \inf_{t \geq 0}Z_{t} > - R \right) = 1 - e^{- 2\nu R}.
\end{equation}

\emph{Equivalently, if} \(y \in \mathbb{R}\) \emph{and}

\[\Gamma(y,R) \coloneqq \left\{ \inf_{t \geq 0}\left( y + Z_{t} \right) \geq y - R \right\},\]

\emph{then}

\[\mathbb{P}\left( \Gamma(y,R) \right) = p_{R},\]

\emph{independently of} \(y\)\emph{.}

We will also need a bound on the time spent in a bounded slab. For
\(R > 0\), let

\[\eta_{R} \coloneqq \inf\{ t \geq 0:\ \left| Z_{t} \right| \geq R\}.\]
\end{corollary}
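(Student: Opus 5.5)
The formula \eqref{eq:4.3} follows from the second hitting formula in Lemma~\ref{lem:4.1} by taking complements. For \(R > 0\), the event \(\{\inf_{t \geq 0} Z_{t} \leq -R\}\) coincides with \(\{\sigma_{R}^{-} < \infty\}\). Indeed, \(Z\) is continuous with \(Z_{0} = 0 > -R\), so by the intermediate value theorem the infimum is at most \(-R\) exactly when the level \(-R\) is attained. There is one subtlety: the infimum over \([0,\infty)\) need not be attained, and a priori one could have \(\inf Z = -R\) without hitting. This is excluded because \(Z_{t} \to +\infty\) almost surely (strong law \(Z_{t}/t \to \nu > 0\)). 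Hence the infimum is a minimum over a compact time interval, and equality \(\inf Z = -R\) forces \(\sigma_{R}^{-} < \infty\). Applying Lemma~\ref{lem:4.1} with \(a = R\) gives \(\mathbb{P}(\sigma_{R}^{-} < \infty) = e^{-2\nu R}\), so
\[p_{R} = \mathbb{P}\left( \inf_{t \geq 0} Z_{t} > -R \right) = 1 - e^{-2\nu R}.\]

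The equivalent form needs care about strict versus non-strict inequality. The event \(\Gamma(y,R) = \{\inf_{t}(y + Z_{t}) \geq y - R\}\) reduces, after subtracting the deterministic \(y\), to \(\{\inf_{t} Z_{t} \geq -R\}\). This set is independent of \(y\), which gives the claimed independence. It differs from \(\{\inf Z > -R\}\) only on \(\{\inf Z = -R\}\), so I need to show this event has probability zero. By the argument above it is contained in \(\{\sigma_{R}^{-} < \infty\}\). Moreover, \(\{\inf Z \geq -R\} \subset \{\inf Z > -R'\}\) for every \(R' > R\). Therefore
\[\mathbb{P}\left( \inf Z = -R \right) \leq \mathbb{P}\left( \inf Z > -R' \right) - \mathbb{P}\left( \inf Z > -R \right) = e^{-2\nu R} - e^{-2\nu R'},\]
and letting \(R' \downarrow R\) sends the right-hand side to zero. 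Equivalently, the distribution function \(R \mapsto 1 - e^{-2\nu R}\) of \(-\inf Z\) is continuous, so \(-\inf Z\) has no atom at \(R\). It follows that \(\mathbb{P}(\Gamma(y,R)) = p_{R}\).

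There is no real obstacle here. The only step needing attention is the identification of the infimum event with the hitting event, which uses continuity together with the transience \(Z_{t} \to \infty\); the remaining step is the atomlessness used to pass between \(>\) and \(\geq\).
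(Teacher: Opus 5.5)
Your proposal is correct and follows the paper's route: the paper just calls Corollary~\ref{cor:4.2} a direct consequence of the identity $\mathbb{P}(\sigma_{a}^{-}<\infty)=e^{-2\nu a}$ from Lemma~\ref{lem:4.1}, and taking complements of that identity is exactly your argument. You also supply details the paper skips when it writes ``Equivalently'': using $Z_{t}\to+\infty$ to ensure the infimum is attained, and using continuity of $R\mapsto 1-e^{-2\nu R}$ to get $\mathbb{P}(\inf_{t}Z_{t}=-R)=0$, so that the non-strict event $\Gamma(y,R)$ has the same probability as the strict one.
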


\begin{lemma}[exit from a symmetric slab]
\label{lem:4.3}
For every \(R > 0\) and every
\(0 \leq \theta < \nu^{2}/2\),

\begin{equation}\tag{4.4}\label{eq:4.4}
\mathbb{E}\left[ e^{\theta\eta_{R}} \right] = \frac{\cosh(\nu R)}{\cosh\left( \sqrt{\nu^{2} - 2\theta}\, R \right)}.
\end{equation}

In particular,

\[\mathbb{E}\left[ \eta_{R} \right] = \frac{R}{\nu}\tanh(\nu R) < \infty.\]

More generally,

\[\mathbb{E}\left[ e^{\theta\eta_{R}} \right] < \infty\quad\quad\text{for every}\quad\quad 0 \leq \theta < \frac{\nu^{2}}{2} + \frac{\pi^{2}}{8R^{2}}.\]
\end{lemma}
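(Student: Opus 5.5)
The plan is to compute the Laplace transform of \(\eta_{R}\) through a Girsanov change of measure that removes the drift. After that, the driftless symmetric-slab problem has an explicit solution, and we continue analytically in the spectral parameter.

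\emph{Step 1: Girsanov.} Let \(W\) be a standard Brownian motion under \(\mathbb{Q}\). Let \(\tau \coloneqq \inf\{t: |W_{t}| \geq R\}\). Under \(\mathbb{Q}\), the process \(Z\) is distributed as \(W\), and on \(\mathcal{F}_{t}\) we have \(d\mathbb{P}/d\mathbb{Q} = \exp(\nu W_{t} - \nu^{2}t/2)\). Since \(\tau < \infty\) a.s. and \(W\) is bounded on \([0,\tau]\), optional stopping is legitimate. For \(\lambda \in \mathbb{R}\) we get
\[
\mathbb{E}\left[ e^{-\lambda\eta_{R}} \right] = \mathbb{E}_{\mathbb{Q}}\left[ e^{\nu W_{\tau}}\, e^{-(\lambda + \nu^{2}/2)\tau} \right],
\]
with both sides in \([0,\infty]\). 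We write \(W_{\tau} = \pm R\) and use the symmetry \(W \mapsto -W\), which leaves \(\tau\) invariant and makes the sign of \(W_{\tau}\) independent of \(\tau\) with probability \(1/2\) each. This turns the right-hand side into \(\cosh(\nu R)\,\mathbb{E}_{\mathbb{Q}}[e^{-(\lambda+\nu^{2}/2)\tau}]\).

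\emph{Step 2: driftless transform.} For \(s \geq 0\), apply optional stopping to the bounded martingale \(\cosh(\sqrt{2s}\,W_{t\wedge\tau})e^{-s(t\wedge\tau)}\). This gives the classical \(\mathbb{E}_{\mathbb{Q}}[e^{-s\tau}] = 1/\cosh(\sqrt{2s}R)\).

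\emph{Step 3: extension to negative \(s\).} This is the main obstacle, since we need \(s = \lambda + \nu^{2}/2\) with \(\lambda = -\theta\), which may be negative. The function \(s \mapsto \mathbb{E}_{\mathbb{Q}}[e^{-s\tau}]\) is a Laplace transform of a positive measure. It is therefore analytic on its half-plane of convergence, and it blows up at the abscissa of convergence by Pringsheim/Landau's theorem for nonnegative measures. The function \(1/\cosh(\sqrt{2s}R)\) is an entire function of \(s\), namely \(1/\cos(\sqrt{2|s|}R)\) for \(s<0\). Its first singularity on the negative axis is at \(\sqrt{2|s|}R = \pi/2\), that is \(s = -\pi^{2}/(8R^{2})\).

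We argue as follows. The two sides agree on \(s \geq 0\). By analytic continuation they agree on the convergence region, and the transform stays finite as long as the right-hand side is analytic. This gives finiteness exactly for \(s > -\pi^{2}/(8R^{2})\), which is the last claim, since \(-\theta + \nu^{2}/2 > -\pi^{2}/(8R^{2})\) if and only if \(\theta < \nu^{2}/2 + \pi^{2}/(8R^{2})\).

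Alternatively, to avoid Landau's theorem, we can verify directly that \(\cos(\sqrt{2|s|}\,W_{t\wedge\tau})e^{|s|(t\wedge\tau)}\) is a martingale. For \(\sqrt{2|s|}R < \pi/2\) the cosine factor is bounded below by \(\cos(\sqrt{2|s|}R) > 0\). Fatou's lemma then gives \(\mathbb{E}[e^{|s|\tau}] \leq 1/\cos(\sqrt{2|s|}R)\). Once integrability holds, dominated convergence gives equality.

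\emph{Step 4: conclusion.} For \(0 \leq \theta < \nu^{2}/2\), write \(\sqrt{2s} = \sqrt{\nu^{2}-2\theta}\), which is real. Combining the previous steps gives \eqref{eq:4.4}. Differentiating at \(\theta = 0\) gives the mean: the derivative of \(-\log\cosh(\sqrt{\nu^{2}-2\theta}R)\) at \(\theta = 0\) is \(\tanh(\nu R)\,R/\nu\), which matches the stated formula. Differentiation under the expectation is justified by the finiteness of the exponential moment in a neighbourhood of \(0\).

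For \(\nu^{2}/2 \leq \theta < \nu^{2}/2 + \pi^{2}/(8R^{2})\), the same formula holds with \(\cosh(\sqrt{\nu^{2}-2\theta}\,R)\) read as \(\cos(\sqrt{2\theta-\nu^{2}}\,R)\). This quantity is positive in that range, so the moment is finite.
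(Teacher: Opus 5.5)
Your proof is correct. The paper gives no argument for Lemma~\ref{lem:4.3} beyond calling it classical. The only route it points to (for Lemma~\ref{lem:4.1}) is the boundary-value problem for the generator: solve \(\tfrac12 u'' + \nu u' + \theta u = 0\) on \((-R,R)\) with \(u(\pm R)=1\), which gives \(u(0)=\cosh(\nu R)/\cosh(\kappa R)\) with \(\kappa=\sqrt{\nu^{2}-2\theta}\). One then needs a martingale verification in which positivity of \(u\) on \([-R,R]\) is what limits \(\theta\). Your Girsanov step is the probabilistic form of the substitution \(u=e^{-\nu x}v\). The reflection symmetry is what you gain on top of it, in three ways:
\begin{itemize}
\item the answer factors visibly as \(\cosh(\nu R)\) times the driftless transform evaluated at \(\nu^{2}/2-\theta\);
\item the threshold \(\nu^{2}/2+\pi^{2}/(8R^{2})\) reads directly as the drift shift plus the principal Dirichlet eigenvalue of \(-\tfrac12\partial_{xx}\) on \((-R,R)\);
\item the Fatou step only needs positivity of the even function \(\cos(\sqrt{2|s|}\,x)\) on \([-R,R]\), which is immediate. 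In the direct ODE route you would instead have to check positivity of the non-symmetric solution \(e^{-\nu x}v(x)\), which needs a small Sturm-type argument once \(\kappa\) becomes imaginary.
\end{itemize}

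Three small corrections.
\begin{itemize}
\item \(1/\cosh(\sqrt{2s}\,R)\) is meromorphic, not entire; it is \(\cosh(\sqrt{2s}\,R)\) that is entire in \(s\).
\item Landau's theorem gives a singularity at the abscissa of convergence, not necessarily a blow-up there. Your argument only uses the singularity, and the divergence at \(s=-\pi^{2}/(8R^{2})\) then follows by monotone convergence at the pole.
\item Step~3 is not needed for \eqref{eq:4.4} itself, since there \(s=\nu^{2}/2-\theta>0\). It is needed only for the extended range \(\nu^{2}/2\le\theta<\nu^{2}/2+\pi^{2}/(8R^{2})\). There your direct \(\cos\)-martingale argument with Fatou and dominated convergence is the cleaner and fully self-contained of your two options.
\end{itemize}
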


\begin{proof}
[Proofs] The formulas in Lemma~\ref{lem:4.1}, Corollary~\ref{cor:4.2} and Lemma~\ref{lem:4.3}
are classical; we record them only because they provide the explicit
success probability \(p_{R}\), the exponential moments of the
level-hitting times, and the slab-exit estimate needed later in the
regeneration and boundary analysis.\end{proof}

\subsection{Candidate levels and good cuts}\label{candidate-levels-and-good-cuts}

We now pass back to the planar process \(X\), still through its drift
coordinate \(U\).

Fix \(L > 0\). Starting from time \(0\), define the successive
level-hitting times

\[H_{0} \coloneqq 0,\quad\quad H_{n} \coloneqq \inf\{ t \geq 0:U_{t} = U_{0} + nL\},\quad\quad n \geq 1.\]

Since \(U\) has positive drift, each \(H_{n}\) is almost surely finite.

At the \(n\)-th hit level, we declare the cut to be good if the process
never backtracks by more than \(R\) afterwards:

\[G_{n} \coloneqq \left\{ \inf_{t \geq H_{n}}(U_{t} - U_{H_{n}}) \geq - R \right\}.\]

By Corollary~\ref{cor:4.2} and the strong Markov property,

\[\mathbb{P}\left( G_{n} \mid \mathcal{F}_{H_{n}} \right) = p_{R} = 1 - e^{- 2\nu R}\quad\quad\text{a.s.}\]

In particular, \(G_{n}\) is independent of the past up to time
\(H_{n}\), and the success probability does not depend on \(n\).

It is useful to isolate the level-to-level increments

\[J_{n} \coloneqq H_{n} - H_{n - 1},\quad\quad n \geq 1.\]

\begin{proposition}[i.i.d. level increments and Bernoulli good
cuts]
\label{prop:4.4}
The sequence \(\left( J_{n},G_{n} \right)_{n \geq 1}\) is
i.i.d. Moreover,

(i) \(J_{n}\) has the same law as
\(\sigma_{L}^{+}\);

(ii) \(G_{n}\) is Bernoulli with parameter
\(p_{R}\);

(iii) \(J_{n}\) and \(G_{n}\) are independent for
each \(n\).
\end{proposition}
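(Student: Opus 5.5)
The plan is to derive every claim from the strong Markov property of the planar process \(X\) at the hitting times \(H_{n}\), reduced to the drift coordinate \(U\) as in the setup of this section. Let \(\theta\) be the time shift. Write \(W^{(n)}_{s} \coloneqq U_{H_{n}+s} - U_{H_{n}}\) for \(s \geq 0\). Since \(U_{H_{n}} = U_{0} + nL\) exactly, and \(H_{n}\) is an a.s.\ finite stopping time, the strong Markov property for \(U_{t} = U_{0} + \nu t + B^{(1)}_{t}\) gives the following: \(W^{(n)}\) is a Brownian motion with drift \(\nu\) started at \(0\), and it is independent of \(\mathcal{F}_{H_{n}}\). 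In other words, \(W^{(n)} \overset{d}{=} Z\) with \(Z\) as in Section~\ref{one-dimensional-hitting-times-for-brownian-motion-with-drift}.

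The first step is to express both quantities through these shifted paths. We have \(J_{n+1} = \inf\{s \geq 0 : W^{(n)}_{s} = L\}\), which is the \(\sigma_{L}^{+}\) functional applied to \(W^{(n)}\). We also have \(G_{n} = \{\inf_{s \geq 0} W^{(n)}_{s} \geq -R\}\), which is the event of Corollary~\ref{cor:4.2} applied to \(W^{(n)}\).

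Items (i)--(iii) then follow directly. For (i), \(J_{n} \overset{d}{=} \sigma_{L}^{+}\) because \(W^{(n-1)} \overset{d}{=} Z\). For (ii), \(\mathbb{P}(G_{n}) = p_{R}\) by \eqref{eq:4.3}. For (iii), note that \(J_{n} = H_{n} - H_{n-1}\) is \(\mathcal{F}_{H_{n}}\)-measurable, while \(G_{n} \in \sigma(W^{(n)})\), which is independent of \(\mathcal{F}_{H_{n}}\). The i.i.d.\ property of the level increments \((J_{n})\) comes from iterating the same argument. Since \(J_{1},\dots,J_{n}\) are \(\mathcal{F}_{H_{n}}\)-measurable and \(J_{n+1}\) is a functional of \(W^{(n)}\), induction yields \(\mathbb{P}(J_{1}\in A_{1},\dots,J_{n+1}\in A_{n+1}) = \prod_{k}\mathbb{P}(\sigma_{L}^{+}\in A_{k})\).

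The next step is that the pairs are identically distributed. Each pair has the form \((J_{n},G_{n}) = F(W^{(n-1)})\) for one fixed measurable map \(F\) on path space. The second coordinate is meaningful because \(G_{n}\) is the event \(\{\inf_{s\ge J_n}(W^{(n-1)}_{s} - L) \geq -R\}\). Since all the \(W^{(n-1)}\) have the law of \(Z\), the pairs share a common law. By (i)--(iii), that law is \(\mathrm{Law}(\sigma_{L}^{+}) \otimes \mathrm{Bernoulli}(p_{R})\).

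The main obstacle is mutual independence of the pairs across different \(n\). The event \(G_{n}\) looks at the entire future after \(H_{n}\), so it is not \(\mathcal{F}_{H_{n+1}}\)-measurable, and the induction above does not apply verbatim to pairs. For example, \(G_{n}\) and \(J_{n+1}\) are both functionals of the same path \(W^{(n)}\). The first thing I would try is to compute a joint probability directly, say \(\mathbb{P}(J_{n+1} \in A,\, G_{n})\). I would condition on \(\mathcal{F}_{H_{n+1}}\) and use the strong Markov property at \(H_{n+1}\) to write this as \(\mathbb{E}[\mathbf{1}_{\{J_{n+1}\in A\}}\mathbf{1}_{\{\inf_{[H_n,H_{n+1}]}W^{(n)}\geq -R\}}]\,\mathbb{P}(\inf Z \geq -R-L)\). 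I would then check whether this factorizes. I expect that it does \emph{not} factorize in general, because on the event \(\{J_{n+1}\) large\(\}\) the path between the two levels is more likely to dip. If the check confirms this, the honest way to close the proof is to prove the statement in its usable form. Concretely, I would establish (i)--(iii), the i.i.d.\ property of \((J_{n})\), the identical distribution of the pairs, and the conditional Bernoulli property \(\mathbb{P}(G_{n}\mid\mathcal{F}_{H_{n}}) = p_{R}\). I would also record explicitly that joint independence of the pairs must be replaced by this conditional structure in the regeneration construction that follows.
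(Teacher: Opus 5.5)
Your proof of (i)--(iii), of the i.i.d.\ property of \((J_n)\), and of the common law \(\mathrm{Law}(\sigma_L^+)\otimes\mathrm{Bernoulli}(p_R)\) of each pair is the same strong-Markov argument the paper gives (at \(H_{n-1}\) for \(J_n\), at \(H_n\) for \(G_n\)). On the remaining claim you are right and the paper is not. Its proof ends with ``iterating the same argument proves that the pairs are i.i.d.'', and that is exactly the step you flag, since \(G_n\) is not \(\mathcal{F}_{H_{n+1}}\)-measurable.

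You can drop the hedge, because the first sentence of the proposition is false. The cleanest check needs no hitting-time densities. Write \(G_n=E\cap E'\) with \(E\coloneqq\{\inf_{H_n\le t\le H_{n+1}}(U_t-U_{H_n})\ge -R\}\in\mathcal{F}_{H_{n+1}}\) and \(E'\coloneqq\{\inf_{t\ge H_{n+1}}(U_t-U_{H_{n+1}})\ge -(L+R)\}\). Since \(G_{n+1}\subset E'\), the strong Markov property at \(H_{n+1}\) gives
\[
p_R=\mathbb{P}(G_n)=\mathbb{P}(E)\bigl(1-e^{-2\nu(L+R)}\bigr),\qquad
\mathbb{P}(G_n\cap G_{n+1})=\mathbb{P}(E)\,p_R=\frac{p_R^{2}}{1-e^{-2\nu(L+R)}}>p_R^{2}.
\]
So consecutive good-cut events are strictly positively correlated, and \((J_n,G_n)_{n\ge1}\) is not an independent sequence. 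The joint law you proposed to compute does not factorize either. \(\mathbb{P}(E\mid J_{n+1})\) is not a.s.\ constant, because a fast climb leaves no time to dip to level \(-R\).

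One caution about your fallback. The identity \(\mathbb{P}(G_n\mid\mathcal{F}_{H_n})=p_R\) is true, but it cannot carry what Section~\ref{regeneration-structure-for-drifted-planar-brownian-motion} builds on this proposition. The event \(\{N_1\ge n\}=G_1^c\cap\cdots\cap G_{n-1}^c\) is not \(\mathcal{F}_{H_n}\)-measurable, and the \(\tau_k\) are not stopping times. In fact \(N_1\) is not geometric: \(\mathbb{P}(N_1=2)=p_R-\mathbb{P}(G_1\cap G_2)<p_R(1-p_R)\). Hence Wald's identity and \eqref{eq:4.5}--\eqref{eq:4.6} do not follow as written, and neither does the strong-Markov proof of Theorem~\ref{thm:4.5}.

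What survives is the following. On \(G_n\), the path after \(H_n\) stays above \(U_{H_n}-R=U_{H_k}+(n-k)L-R>U_{H_k}-R\) for every \(k<n\). So earlier failures are decided before \(H_n\), and \(\{N_1=n\}=F_n\cap G_n\) with \(F_n\in\mathcal{F}_{H_n}\). Summing the strong Markov property over \(n\) shows that the recentred path after \(\tau_1\) is independent of the path before \(\tau_1\). It is distributed as the drifted motion conditioned on \(\{\inf_{t\ge0}(U_t-U_0)\ge-R\}\). The same decomposition works under this conditioned law, so iterating shows that the blocks \(k\ge2\) are i.i.d., each distributed as the first block of the conditioned motion. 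They are also independent of the first block, whose law is different. This is the statement to record in place of the i.i.d.\ claim for the pairs.
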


\begin{proof}
By the strong Markov property at time \(H_{n - 1}\), the
shifted process

\[U_{H_{n - 1} + t} - U_{H_{n - 1}},\quad\quad t \geq 0,\]

is a fresh copy of \(Z_{t} = \nu t + W_{t}\), independent of
\(\mathcal{F}_{H_{n - 1}}\). Therefore

\[J_{n} = H_{n} - H_{n - 1}\]

has the same law as the first hitting time of level \(L\) by \(Z\), that
is, \(\sigma_{L}^{+}\), and the sequence
\(\left( J_{n} \right)_{n \geq 1}\) is i.i.d.

Likewise, \(G_{n}\) depends only on the post-\(H_{n}\) process shifted
at \(H_{n}\), hence is independent of \(\mathcal{F}_{H_{n}}\) and has
probability \(p_{R}\). Since \(J_{n}\) is
\(\mathcal{F}_{H_{n}}\)-measurable, \(J_{n}\) and \(G_{n}\) are
independent. Iterating the same argument proves that the pairs
\(\left( J_{n},G_{n} \right)\) are i.i.d.
\end{proof}

\subsection{Regeneration times}\label{regeneration-times}

A regeneration time is the first candidate level hit after which the
process never backtracks by more than \(R\).

Define the first regeneration index by

\[N_{1} \coloneqq \inf\{ n \geq 1:G_{n}\text{ occurs}\},\]

and the first regeneration time by

\[\tau_{1} \coloneqq H_{N_{1}}.\]

Recursively, set \(\tau_{0} \coloneqq 0\), and after \(\tau_{k}\) define the
shifted process

\[X_{t}^{(k)} \coloneqq X_{\tau_{k} + t} - X_{\tau_{k}},\quad\quad t \geq 0.\]

Let \(U_{t}^{(k)} \coloneqq \langle X_{t}^{(k)},e\rangle\). For this shifted
process, define

\[H_{n}^{(k)} \coloneqq \inf\{ t \geq 0:U_{t}^{(k)} = nL\},\quad\quad G_{n}^{(k)} \coloneqq \left\{ \inf_{t \geq H_{n}^{(k)}}(U_{t}^{(k)} - U_{H_{n}^{(k)}}^{(k)}) \geq - R \right\},\]

then set

\[N_{k + 1} \coloneqq \inf\{ n \geq 1:G_{n}^{(k)}\text{ occurs}\},\quad\quad\tau_{k + 1} \coloneqq \tau_{k} + H_{N_{k + 1}}^{(k)}.\]

Thus \(\tau_{k}\) is the \(k\)-th good cut time. By construction,

\[U_{\tau_{k + 1}} - U_{\tau_{k}} = LN_{k + 1},\]

and after time \(\tau_{k}\) the future path never goes more than \(R\)
behind the cut level \(U_{\tau_{k}}\).

We denote the \(k\)-th regeneration block by

\[\Delta\tau_{k} \coloneqq \tau_{k} - \tau_{k - 1},\]

together with the recentered path segment

\[{\widetilde{X}}^{(k)}(t) \coloneqq X_{\tau_{k - 1} + t} - X_{\tau_{k - 1}},\quad\quad 0 \leq t \leq \Delta\tau_{k}.\]

\begin{theorem}[i.i.d. regeneration blocks]
\label{thm:4.5}
The sequence

\[(\Delta\tau_{k},{\widetilde{X}}^{(k)})_{k \geq 1}\]

is i.i.d.
\end{theorem}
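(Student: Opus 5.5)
The plan is to decompose the path at the first good cut by summing over the value of the random index \(N_{1}\), and then to iterate. The one genuine difficulty is that \(\tau_{1} = H_{N_{1}}\) is \emph{not} a stopping time, because the event \(G_{n}\) depends on the entire future after \(H_{n}\). The strong Markov property therefore cannot be applied at \(\tau_{1}\) directly; instead we apply it at the genuine stopping times \(H_{n}\) and sum over \(n\).

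\textbf{Step 1 (the failures \(G_{j}^{c}\) are determined before \(H_{n}\)).} Fix \(n\) and \(j < n\). The event \(G_{j}^{c}\) says that after \(H_{j}\) the path drops strictly below \(U_{0} + jL - R\). On \(G_{n}\), the path after \(H_{n}\) stays at or above \(U_{0} + nL - R\), and \(nL - R > jL - R\) since \(L > 0\). Hence any such drop must happen in the time interval \([H_{j}, H_{n}]\). This gives the identity
\[\{N_{1} = n\} = \Bigl(\textstyle\bigcap_{j < n}\widetilde{G}_{j}^{c}\Bigr) \cap G_{n},\]
where \(\widetilde{G}_{j}^{c}\) is the event that \(U\) goes below \(U_{0} + jL - R\) during \([H_{j}, H_{n}]\). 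Each \(\widetilde{G}_{j}^{c}\) is \(\mathcal{F}_{H_{n}}\)-measurable.

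\textbf{Step 2 (factorization at \(\tau_{1}\)).} Let \(A\) be a measurable set of recentered pre-cut segments, so that \(\{\widetilde{X}^{(1)} \in A\} \cap \{N_{1} = n\}\) is \(\mathcal{F}_{H_{n}}\)-measurable on the \(\widetilde{G}\)-events. Let \(B\) be a measurable set of post-cut paths, and write \(\theta\) for the recentered shift \(X_{H_{n} + \cdot} - X_{H_{n}}\). Apply the strong Markov property of the planar process at \(H_{n}\). The shifted process is then a fresh drifted Brownian motion started at \(0\), independent of \(\mathcal{F}_{H_{n}}\), and \(G_{n}\) is the event \(G_{0} \coloneqq \{\inf_{t}\langle X_{t} - X_{0}, e\rangle \geq -R\}\) evaluated on that shifted path. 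Using Step 1 this gives
\[\mathbb{P}\bigl(N_{1} = n,\ \widetilde{X}^{(1)} \in A,\ \theta \in B\bigr) = \mathbb{P}\Bigl(\textstyle\bigcap_{j < n}\widetilde{G}_{j}^{c} \cap \{\widetilde{X}^{(1)} \in A\}\Bigr)\,\mathbb{P}\bigl(\{X - x_{0} \in B\} \cap G_{0}\bigr).\]
Summing over \(n\) shows that \((\Delta\tau_{1}, \widetilde{X}^{(1)})\) is independent of the post-\(\tau_{1}\) path. It also shows that the post-\(\tau_{1}\) path has law \(\mathbb{P}_{R} \coloneqq \mathbb{P}(\,\cdot \mid G_{0})\), the drifted motion conditioned never to backtrack below \(-R\). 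This conditioning is well defined because \(\mathbb{P}(G_{0}) = p_{R} > 0\) by Corollary~\ref{cor:4.2}.

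\textbf{Step 3 (the same decomposition under \(\mathbb{P}_{R}\)).} Repeat Step 2 with \(\mathbb{P}\) replaced by \(\mathbb{P}_{R}\). Conditioning on \(G_{0}\) is a Doob \(h\)-transform of the longitudinal coordinate with \(h(u) = 1 - e^{-2\nu(u + R)}\), the transverse coordinate being left untouched. Under \(\mathbb{P}_{R}\) the process is still strong Markov, and its law after \(H_{n}\), recentered, is again \(\mathbb{P}_{R}\) on \(G_{n}\). The reason is that \(G_{n} \subset G_{0}\)-type constraints at higher levels absorb the conditioning: on \(G_{n}\), the path after \(H_{n}\) stays above \(nL - R\), which is above \(-R\), so \(G_{0}\) imposes no further restriction on the future. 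The factorization therefore goes through with \(\mathbb{P}_{R}\) on both sides. By induction on \(k\), with the recursion defining \(\tau_{k+1}\) from the shifted process \(X^{(k)}\), the blocks \((\Delta\tau_{k}, \widetilde{X}^{(k)})_{k \geq 1}\) are independent, and each has the law of the first block under \(\mathbb{P}_{R}\).

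\textbf{The main obstacle: identical distribution of the first block.} The first block starts at time \(0\) with no conditioning. Its law coincides with that of the later blocks exactly when the process is started at a regeneration point, that is, under \(\mathbb{P}_{R}\), or equivalently when one declares time \(0\) to be a good cut. I will therefore state explicitly that the identical-distribution part of the claim for all \(k \geq 1\), including \(k = 1\), is proved under this convention (starting from a regeneration point), and that under the unconditioned \(\mathbb{P}\) the first block has a different law. Under \(\mathbb{P}\), Step 2 still shows that the first block is independent of the rest and that the blocks with \(k \geq 2\) are i.i.d. with the \(\mathbb{P}_{R}\)-law. The first block is also a.s. finite with finite mean, since \(N_{1}\) is geometric with parameter \(p_{R}\) and, by Lemma~\ref{lem:4.1}, the level increments have exponential moments. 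For the law of large numbers in Section~\ref{law-of-large-numbers-for-persistence-intensity} this delayed initial block is harmless.
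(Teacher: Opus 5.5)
Your proposal is correct and takes a genuinely different route from the paper. The paper's proof is invalid at exactly the point you flag: it applies the strong Markov property directly at $\tau_{k-1}$ and asserts that the post-cut process is a fresh, unconditioned drifted Brownian motion independent of $\mathcal{F}_{\tau_{k-1}}$. Since $\tau_{k-1}$ is not a stopping time, this is unjustified, and it is also false: as your Step 2 shows, the recentered post-$\tau_1$ path has law $\mathbb{P}_R=\mathbb{P}(\,\cdot\mid G_0)$.

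Your argument supplies the missing idea in three moves. You rewrite $\{N_1=n\}$ as an $\mathcal{F}_{H_n}$-measurable event intersected with $G_n$. You apply the strong Markov property at the genuine stopping times $H_n$. You then sum over $n$. Under $\mathbb{P}_R$ you also observe that, on $G_n$, the constraint $G_0$ is automatically satisfied after $H_n$. This means the $n$-dependent normalisation $1/h(nL)$ is absorbed into the pre-cut factor, which is what lets the induction close.

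Your route buys the correct conclusion. Under $\mathbb{P}$ the blocks are independent, and the blocks with $k\ge 2$ all share the $\mathbb{P}_R$-law of a first block. Full i.i.d. holds only when time $0$ is itself a regeneration point. Your caveat is not pedantry. Under $\mathbb{P}$, with positive probability $U$ drops below $U_0-R$ before $H_1$, hence inside the first block. No later block ever goes more than $R$ below its starting level. So the theorem as literally stated is false for the pair $k=1$, $k=2$.

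One correction to your side remark. You take two facts from the claim that $N_1$ is geometric with parameter $p_R$, i.e. from Proposition~\ref{prop:4.4}:
\begin{itemize}
\item a.s. finiteness of $N_1$, which your summation over $n$ needs so that the events $\{N_1=n\}$ exhaust the space;
\item the finite mean of the first block.
\end{itemize}
That proposition has the same defect as the paper's proof. The event $G_m$ depends on the path after $H_{m+1}$. Your own Step 1 reasoning gives $\mathbb{P}(G_1\cap G_2)=p_R\,\mathbb{P}(\sigma_L^{+}<\sigma_R^{-})=p_R^{2}/p_{L+R}\neq p_R^{2}$. So the $G_n$ are not independent, $N_1$ is not geometric, and \eqref{eq:4.5} is not exact.

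The facts you need are still true, but they should be proved by domination. Run attempts only at genuine stopping times. After a failed attempt at some level, wait for the stopping time at which $U$ drops more than $R$ below that level. Then use the first level $nL$ above the running maximum as the next attempt. The number of attempts is geometric with parameter $p_R$, and each attempt costs a time with exponential moments. Finally, $N_1$ is at most the index of the first successful attempt.

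With this fix, your remark that the delayed first block is harmless for Section~\ref{law-of-large-numbers-for-persistence-intensity} is right as far as the existence of the limit goes. The renewal constant in Proposition~\ref{prop:4.9}, however, becomes the mean length of a $\mathbb{P}_R$-block rather than $\mathbb{E}[\tau_1]$.
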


\begin{proof}
Fix \(k \geq 1\). By the strong Markov property at
\(\tau_{k - 1}\), the shifted post-\(\tau_{k - 1}\) process

\[(X_{\tau_{k - 1} + t} - X_{\tau_{k - 1}})_{t \geq 0}\]

is independent of \(\mathcal{F}_{\tau_{k - 1}}\) and has the same law as
the original drifted Brownian motion started from \(0\). By
construction, \(\Delta\tau_{k}\) and the stopped path
\({\widetilde{X}}^{(k)}\) are measurable functionals of this shifted
process alone. Therefore
\(\left( \Delta\tau_{k},{\widetilde{X}}^{(k)} \right)\) is independent
of the past and has the same law as
\(\left( \Delta\tau_{1}, \widetilde{X}^{(1)} \right)\).
\end{proof}

This is the main deliverable of the section: after recentering at each
good cut, the trajectory decomposes into i.i.d. path blocks. Everything
in Section~\ref{law-of-large-numbers-for-persistence-intensity} will be built on this theorem.

\subsection{Distribution and moments of a regeneration block}\label{distribution-and-moments-of-a-regeneration-block}

The previous construction makes the block structure explicit enough that
the law of \(\Delta\tau_{1}\) can be computed by a geometric-sum
argument.

Since \(N_{1} = \inf\{ n \geq 1:G_{n}\}\) and the pairs
\(\left( J_{n},G_{n} \right)\) are i.i.d., we have

\[\Delta\tau_{1} = \tau_{1} = \sum_{n = 1}^{N_{1}}J_{n},\]

where \(N_{1}\) is geometric with parameter \(p_{R}\):

\[\mathbb{P}\left( N_{1} = n \right) = \left( 1 - p_{R} \right)^{n - 1}p_{R},\quad\quad n \geq 1.\]

\begin{proposition}[mean and exponential moments of the
regeneration length]
\label{prop:4.6}
The first regeneration time satisfies

\begin{equation}\tag{4.5}\label{eq:4.5}
\mathbb{E}\left[ \tau_{1} \right] = \frac{L}{\nu p_{R}} = \frac{L}{\nu\left( 1 - e^{- 2\nu R} \right)} < \infty.
\end{equation}

Moreover, for every \(\theta \geq 0\) such that

\[\left( 1 - p_{R} \right)\,\mathbb{E}\left[ e^{\theta\sigma_{L}^{+}} \right] < 1,\]

one has

\begin{equation}\tag{4.6}\label{eq:4.6}
\mathbb{E}\left[ e^{\theta\tau_{1}} \right] = \frac{p_{R}\mathbb{E}\left[ e^{\theta\sigma_{L}^{+}} \right]}{1 - \left( 1 - p_{R} \right)\mathbb{E}\left[ e^{\theta\sigma_{L}^{+}} \right]} < \infty.
\end{equation}

In particular, there exists \(\theta_{0} > 0\) such that
\(\mathbb{E}\left[ e^{\theta_{0}\tau_{1}} \right] < \infty\).
\end{proposition}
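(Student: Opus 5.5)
The plan is to treat $\tau_1 = \sum_{n=1}^{N_1} J_n$ as a geometric random sum and compute everything by conditioning on the value of $N_1$. By Proposition~\ref{prop:4.6}'s setup (Proposition~\ref{prop:4.4}), the pairs $(J_n,G_n)_{n\ge1}$ are i.i.d., with $J_n \overset{d}{=} \sigma_L^+$ independent of $G_n$. The event $\{N_1 = n\}$ equals $G_1^c\cap\cdots\cap G_{n-1}^c\cap G_n$. Hence, by independence across indices and within each pair, for any nonnegative measurable $f$,
\[
\mathbb{E}\Big[f\Big(\textstyle\sum_{m=1}^{n}J_m\Big);N_1=n\Big]=(1-p_R)^{n-1}p_R\,\mathbb{E}\Big[f\Big(\textstyle\sum_{m=1}^{n}J_m\Big)\Big].
\]
So $N_1$ is independent of the i.i.d. sequence $(J_n)$ in the sense needed, and $\tau_1$ is a compound geometric sum.

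For the mean, I take $f(x)=x$ and sum over $n$ (Tonelli, since everything is nonnegative). This gives $\mathbb{E}[\tau_1]=\mathbb{E}[N_1]\,\mathbb{E}[\sigma_L^+]$, which is Wald's identity. Then I insert $\mathbb{E}[N_1]=1/p_R$ and $\mathbb{E}[\sigma_L^+]=L/\nu$ from Lemma~\ref{lem:4.1}, and use $p_R=1-e^{-2\nu R}$ from Corollary~\ref{cor:4.2}. This yields \eqref{eq:4.5}.

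For the exponential moment, I take $f(x)=e^{\theta x}$ and write $M(\theta)\coloneqq\mathbb{E}[e^{\theta\sigma_L^+}]$. Independence gives $\mathbb{E}[e^{\theta\sum_{m\le n}J_m}]=M(\theta)^n$, so
\[
\mathbb{E}[e^{\theta\tau_1}]=\sum_{n\ge1}(1-p_R)^{n-1}p_R\,M(\theta)^n .
\]
This geometric series converges exactly when $(1-p_R)M(\theta)<1$, and its sum is \eqref{eq:4.6}. Monotone convergence justifies the interchange, and it also shows the value is $+\infty$ otherwise, although only the finite case is needed.

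The last claim, existence of $\theta_0>0$, is the only step that needs an argument. By Lemma~\ref{lem:4.1}, $M(\theta)=\exp\big(L(\nu-\sqrt{\nu^2-2\theta})\big)$ is finite and continuous on $[0,\nu^2/2)$, with $M(0)=1$. Since $1-p_R=e^{-2\nu R}<1$, continuity gives a small $\theta_0>0$ with $e^{-2\nu R}M(\theta_0)<1$. One can even make it explicit: the condition reads $L(\nu-\sqrt{\nu^2-2\theta})<2\nu R$. There is no real obstacle here. The only care needed is bookkeeping the independence of $G_n$ from $J_1,\dots,J_n$ and from earlier $G_m$, and that is exactly what Proposition~\ref{prop:4.4} supplies.
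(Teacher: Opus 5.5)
Your route is the paper's route: a compound geometric sum via Proposition~\ref{prop:4.4}, then Wald's identity and a geometric series. Both arguments fail at the same step. The factorization $\mathbb{E}[f(\sum_{m\le n}J_m);N_1=n]=(1-p_R)^{n-1}p_R\,\mathbb{E}[f(\sum_{m\le n}J_m)]$ needs the pairs $(J_n,G_n)$ to be independent across $n$, and they are not. The event $G_m$ is an infimum over all $t\ge H_m$, so it is a functional of the entire post-$H_m$ path, including $J_{m+1},J_{m+2},\dots$ and the later events $G_{m+1},\dots$.

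The strong Markov property only gives $G_n\perp\mathcal{F}_{H_n}$. That makes $G_n$ independent of $J_1,\dots,J_n$, but not of the earlier $G_m$, which are not $\mathcal{F}_{H_n}$-measurable. So your sentence ``that is exactly what Proposition~\ref{prop:4.4} supplies'' is where the argument breaks, and the paper's ``iterating the same argument'' has the same defect.

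Concretely, write $p_x\coloneqq 1-e^{-2\nu x}$. Then $G_1\cap G_2=\{\inf_{H_1\le t\le H_2}U_t\ge U_0+L-R\}\cap G_2$, so the strong Markov property at $H_2$ gives $\mathbb{P}(G_1\cap G_2)=\mathbb{P}(\sigma_L^+<\sigma_R^-)\,p_R=p_R^2/p_{L+R}>p_R^2$. Hence $\mathbb{P}(N_1=2)=p_R(1-p_R/p_{L+R})<p_R(1-p_R)$, so $N_1$ is not geometric with parameter $p_R$. Nor is $N_1$ independent of $(J_n)$: a failed cut at level $n$ forces the path to drop $R$ below that level, so $J_{n+1}$ is positively correlated with $G_n^c$.

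This is fatal for the closed forms themselves, not just for the proof: \eqref{eq:4.5} is false. So is \eqref{eq:4.6}, since \eqref{eq:4.6} on any interval $[0,\theta_1)$ would give \eqref{eq:4.5} by differentiating at $\theta=0$. To see this, run trials at the stopping times $S_k$ at which a new grid level is first reached. Declare failure at the stopping time $D_k$ at which $U$ falls to $U_{S_k}-R$. Every grid level first reached before $D_k$ is then a failed cut, so the next candidate lies at least $L+R$ above $U_{D_k}$. The number of failed trials is geometric with parameter $p_R$. By Lemma~\ref{lem:4.1}, a failed trial costs in mean at least $R/\nu$ for the undershoot (since $\mathbb{E}[\sigma_R^-\mid\sigma_R^-<\infty]=R/\nu$) plus $(L+R)/\nu$ for the climb. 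Hence
\[
\mathbb{E}[\tau_1]\ \ge\ \frac{L}{\nu}+\frac{1-p_R}{p_R}\cdot\frac{L+2R}{\nu}\ =\ \frac{L}{\nu p_R}+\frac{2R(1-p_R)}{\nu p_R}\ >\ \frac{L}{\nu p_R}.
\]

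Only the qualitative part of the statement can be proved, and this trial decomposition is the right tool for it. The undershoot time has the law of $\sigma_R^-$ given $\{\sigma_R^-<\infty\}$, which has exponential moments by \eqref{eq:4.2} continued to small negative $\lambda$. The climb height has exponential tails, and failures occur with conditional probability $1-p_R$ at stopping times. This yields $\mathbb{E}[\tau_1]<\infty$ and $\mathbb{E}[e^{\theta_0\tau_1}]<\infty$ for some $\theta_0>0$, but not the explicit formulas.
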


\begin{proof}
Since \(N_{1}\) is geometric with mean \(1/p_{R}\),
independent of the i.i.d. level increments
\(J_{n} \sim \sigma_{L}^{+}\), Wald's identity gives

\[\mathbb{E}\left[ \tau_{1} \right]= \mathbb{E}\left[ N_{1} \right]\mathbb{E}\left[ J_{1} \right] = \frac{1}{p_{R}} \cdot \frac{L}{\nu}.\]

For the exponential moment, let

\[M_{L}(\theta) \coloneqq \mathbb{E}\left[ e^{\theta\sigma_{L}^{+}} \right] = \exp\left( L\left( \nu - \sqrt{\nu^{2} - 2\theta} \right) \right),\quad\quad 0 \leq \theta < \nu^{2}/2.\]

Conditioning on \(N_{1}\) gives

\[\mathbb{E}\left[ e^{\theta\tau_{1}} \right] = \sum_{n \geq 1}^{}\mathbb{P}\left( N_{1} = n \right)\, M_{L}(\theta)^{n} = p_{R}M_{L}(\theta)\sum_{n \geq 0}^{}(\left( 1 - p_{R} \right)M_{L}(\theta))^{n},\]

which proves the formula whenever
\(\left( 1 - p_{R} \right)M_{L}(\theta) < 1\). Since \(M_{L}(0) = 1\)
and \(1 - p_{R} = e^{- 2\nu R} < 1\), the inequality holds for all
sufficiently small \(\theta > 0\).
\end{proof}

Thus regeneration blocks are not merely integrable; they have light
tails. This quantitative control is what will later allow us to localize
interaction regions near a cut and to show that the interface correction
has finite mean.

\subsection{Forward and backward window lengths near a cut}\label{forward-and-backward-window-lengths-near-a-cut}

The topology in Section~\ref{law-of-large-numbers-for-persistence-intensity} will be created by the path near each cut
plane, so we isolate here the purely probabilistic time windows during
which the process can stay within a bounded longitudinal distance of a
cut.

Fix \(\rho > 0\). For a regeneration time \(\tau_{k}\), define the
forward window

\[\Theta_{k}^{+}(\rho) \coloneqq \inf\{ t \geq 0:\ U_{\tau_{k} + t} - U_{\tau_{k}} = \rho\}.\]

This is the time needed, after the cut, to move forward by longitudinal
distance \(\rho\).

Define the backward window

\[\Theta_{k}^{-}(\rho) \coloneqq \tau_{k} - \sup\{ s \leq \tau_{k}:\ U_{s} = U_{\tau_{k}} - \rho\}.\]

This is the length of the final climb from level \(U_{\tau_{k}} - \rho\)
to the cut level \(U_{\tau_{k}}\).

The forward window has exactly the same law as \(\sigma_{\rho}^{+}\).

\begin{proposition}[forward windows]
\label{prop:4.7}
For every \(\rho > 0\) and every \(k \geq 1\),

\begin{equation}\tag{4.7}\label{eq:4.7}
\Theta_{k}^{+}(\rho)\overset{d}{=}\sigma_{\rho}^{+}.
\end{equation}

Hence

\[\mathbb{E}\left[ \Theta_{k}^{+}(\rho) \right] = \frac{\rho}{\nu},\]

and for every \(0 \leq \theta < \nu^{2}/2\),

\[\mathbb{E}\left[ e^{\theta\Theta_{k}^{+}(\rho)} \right] = \exp\left( \rho\left( \nu - \sqrt{\nu^{2} - 2\theta} \right) \right).\]
\end{proposition}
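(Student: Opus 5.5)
The plan is to reduce the statement to a one-dimensional fact about the fresh process started at a candidate level, and then read off the mean and exponential moment from Lemma~\ref{lem:4.1}. First, write $\tau_k=\tau_{k-1}+H^{(k-1)}_{N_k}$ and decompose over the value of the regeneration index. For a bounded measurable $F$,
\[
\mathbb{E}\bigl[F(\Theta_k^{+}(\rho))\bigr]=\sum_{n\ge 1}\mathbb{E}\bigl[F(\Theta_k^{+}(\rho));\,N_k=n\bigr].
\]
On $\{N_k=n\}$ one has $\tau_k=\tau_{k-1}+H^{(k-1)}_n$, and $\Theta_k^{+}(\rho)$ is the first passage time to level $+\rho$ of the shifted drift coordinate $Z^{(n)}_t:=U_{\tau_{k-1}+H_n^{(k-1)}+t}-U_{\tau_{k-1}+H_n^{(k-1)}}$. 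The time $\tau_{k-1}+H^{(k-1)}_n$ is a genuine stopping time. So by the strong Markov property (as in Proposition~\ref{prop:4.4} and Theorem~\ref{thm:4.5}), $Z^{(n)}$ is a fresh copy of $Z_t=\nu t+B_t$, independent of $\mathcal{F}_{\tau_{k-1}+H_n^{(k-1)}}$.

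Second, split the event $\{N_k=n\}$ into $(G^{(k-1)}_1)^c\cap\dots\cap(G^{(k-1)}_{n-1})^c$ and $G^{(k-1)}_n$. This split has to be handled with care, because the failure events $(G^{(k-1)}_j)^c$ for $j<n$ are \emph{not} measurable with respect to the past at $H_n$: a backtrack after $H_j$ may occur after $H_n$. I would therefore rewrite $\{N_k=n\}$ in terms of the running infimum, so that each summand becomes an expectation of a functional of $Z^{(n)}$ multiplied by a past-measurable weight. The success event then reads $G_n=\{\inf_t Z^{(n)}_t\ge -R\}$. Once this is done, each summand has the form $\mathbb{E}[w_n]\,\mathbb{E}[F(\sigma^+_\rho);\,\inf Z\ge -R]$, up to the correction coming from earlier failures.

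The main obstacle is exactly here. Even ignoring the earlier failures, the law of $\sigma_\rho^+$ restricted to $\{\inf_t Z_t\ge -R\}$ is the law of the first passage time of the Doob $h$-transform with $h(x)=1-e^{-2\nu(x+R)}$. The event $\{\inf Z\ge -R\}$ is not independent of $\sigma^+_\rho$: paths that reach $+\rho$ slowly tend to dip lower. To get the stated identity $\Theta_k^{+}(\rho)\overset{d}{=}\sigma_\rho^+$, I would first try to show that the correlation cancels after summing over $n$. The idea is to use the strong Markov property at $\sigma^+_\rho$, which splits $\{\inf Z\ge -R\}$ into a pre-$\sigma_\rho^+$ part and a post-$\sigma_\rho^+$ part; the post part has probability $1-e^{-2\nu(R+\rho)}$, independent of $\sigma_\rho^+$. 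I would then check, via the Laplace transforms \eqref{eq:4.1}--\eqref{eq:4.2}, whether the pre part combines with the geometric weights $(1-p_R)^{n-1}$ to restore $\exp(-\rho(\sqrt{\nu^2+2\lambda}-\nu))$. If this exact cancellation fails, the natural fallback is to read $\Theta_k^{+}(\rho)$ as the forward passage time of the unconditioned fresh process at the cut stopping time. Under that reading the identity in law is immediate from the strong Markov property.

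Finally, given \eqref{eq:4.7}, the mean $\rho/\nu$ and the moment generating function $\exp\bigl(\rho(\nu-\sqrt{\nu^2-2\theta})\bigr)$ for $0\le\theta<\nu^2/2$ follow directly from Lemma~\ref{lem:4.1} with $a=\rho$.
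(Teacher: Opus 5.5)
Your diagnosis of the obstacle is correct, but the proposal does not prove the statement. The step you leave open cannot be closed, because the statement is false as written.

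First, for \(k\ge2\) the time \(\tau_{k-1}+H_n^{(k-1)}\) is not a stopping time, since \(\tau_{k-1}\) already depends on the future. Decompose over the absolute index instead. On \(G_m\) the path stays above \(U_0+mL-R\) after \(H_m\), so any backtrack below \(U_0+jL-R\), \(j<m\), must occur before \(H_m\). Hence \(\{\tau_k=H_m\}=B_{m,k}\cap G_m\), where \(B_{m,k}\in\mathcal F_{H_m}\) is the event that exactly \(k-1\) indices \(j<m\) satisfy \(\inf_{[H_j,H_m]}U\ge U_0+jL-R\). In particular there is no leftover ``correction from earlier failures''. The strong Markov property at \(H_m\) then gives
\[
\mathbb E[F(\Theta_k^+(\rho));\tau_k=H_m]=\mathbb P(B_{m,k})\,\mathbb E[F(\sigma_\rho^+);\inf_t Z_t\ge -R].
\]
Summing over \(m\) only produces the normalization \(\sum_m\mathbb P(B_{m,k})=1/p_R\). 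So \(\Theta_k^+(\rho)\) has the law of \(\sigma_\rho^+\) under \(\mathbb P(\cdot\mid\inf_t Z_t\ge -R)\), and the geometric weights have nothing to cancel.

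Completing your split at \(\sigma_\rho^+\) with the two-sided exit formula on \((-R,\rho)\) gives
\[
\mathbb E\bigl[e^{-\lambda\Theta_k^+(\rho)}\bigr]=e^{-\rho(\gamma-\nu)}\cdot\frac{1-e^{-2\nu(\rho+R)}}{1-e^{-2\nu R}}\cdot\frac{1-e^{-2\gamma R}}{1-e^{-2\gamma(\rho+R)}},\qquad\gamma=\sqrt{\nu^{2}+2\lambda}.
\]
The correction factor is strictly greater than \(1\) for every \(\lambda>0\), because \(s\mapsto(1-e^{-2sR})/(1-e^{-2s(\rho+R)})\) is strictly increasing. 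Differentiating at \(\lambda=0\) gives \(\mathbb E[\Theta_k^+(\rho)]=\nu^{-1}\bigl[(\rho+R)\coth(\nu(\rho+R))-R\coth(\nu R)\bigr]<\rho/\nu\). So \eqref{eq:4.7} and both moment identities fail. Your fallback, reading \(\Theta_k^+(\rho)\) as a passage time after a stopping time, proves a different statement.

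That fallback is exactly the paper's proof, which is a one-line appeal to the strong Markov property at \(\tau_k\). It is invalid because \(\tau_k\) is not a stopping time. The post-\(\tau_k\) drift coordinate is the Doob \(h\)-transform of \(Z\) with \(h(x)=1-e^{-2\nu(x+R)}\), not a fresh copy of \(Z\). The proof of Theorem~\ref{thm:4.5} makes the same move, so it cannot be cited for this step.

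What is true, and is all the Boundary Lemma needs, is the domination \(\Theta_k^+(\rho)\preccurlyeq\sigma_\rho^+\). This gives \(\mathbb E[\Theta_k^+(\rho)]\le\rho/\nu\) and the exponential-moment identity with \(\le\) for \(0\le\theta<\nu^2/2\). A short proof: the conditioned process solves \(dY_t=\bigl(\nu+h'(Y_t)/h(Y_t)\bigr)dt+dW_t\), \(Y_0=0\), with \(h'/h>0\). Driving \(Z\) by the same \(W\) gives \(Y_t-Z_t=\int_0^t h'(Y_s)/h(Y_s)\,ds\ge0\), so \(Y\) reaches \(\rho\) no later than \(Z\). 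You should state and prove the proposition in this dominated form.
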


\begin{proof}
By the strong Markov property at \(\tau_{k}\), the shifted
process

\[U_{\tau_{k} + t} - U_{\tau_{k}},\quad\quad t \geq 0,\]

is a fresh copy of \(Z_{t} = \nu t + W_{t}\). The claim follows from
Lemma~\ref{lem:4.1}.
\end{proof}

The backward window is not exactly equal in law to
\(\sigma_{\rho}^{+}\), but it is stochastically dominated by it.

\begin{proposition}[backward windows]
\label{prop:4.8}
For every \(\rho > 0\) and every \(k \geq 1\),

\begin{equation}\tag{4.8}\label{eq:4.8}
\Theta_{k}^{-}(\rho) \preccurlyeq \sigma_{\rho}^{+},
\end{equation}

where \(\preccurlyeq\) denotes stochastic domination. In
particular,

\[\mathbb{E}\left[ \Theta_{k}^{-}(\rho) \right] \leq \frac{\rho}{\nu},\]

and for every \(0 \leq \theta < \nu^{2}/2\),

\[\mathbb{E}\left[ e^{\theta\Theta_{k}^{-}(\rho)} \right] \leq \exp\left( \rho\left( \nu - \sqrt{\nu^{2} - 2\theta} \right) \right).\]
\end{proposition}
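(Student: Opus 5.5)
The plan is to handle a single deterministic level first and only then pass to the random regeneration index. Write \(a\coloneqq U_{0}+nL\) for the \(n\)-th candidate level, \(H_{n}\) for its first hitting time, and
\[D_{n}\coloneqq\sup\{s\le H_{n}:U_{s}=a-\rho\}\]
for the last passage at \(a-\rho\) before \(H_{n}\). On \(\{\tau_{k}=H_{n}\}\) we have \(\Theta_{k}^{-}(\rho)=H_{n}-D_{n}\), so it suffices to control the final crossing \(H_{n}-D_{n}\) and then account for the event \(\{\tau_{k}=H_{n}\}\). If \(a-\rho\) lies below the level at which the relevant block starts, the set in the definition may be empty. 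In that case I will use the convention that the supremum is the starting time. The crossing length is then a first hitting time over a distance at most \(\rho\), which is stochastically dominated by \(\sigma_{\rho}^{+}\) by monotonicity of \(a\mapsto\sigma_{a}^{+}\).

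\textbf{Step 1 (law of the final crossing).} By Williams' path decomposition (equivalently, Pitman's last-exit decomposition) for the diffusion \(Z\), the path on \([D_{n},H_{n}]\) is independent of the path before \(D_{n}\). After recentering at \(a-\rho\), it is the \(h\)-transform of \(Z\), killed at \(0\), with \(h(x)=\mathbb{P}_{x}(\text{hit }\rho\text{ before }0)=(1-e^{-2\nu x})/(1-e^{-2\nu\rho})\), run from \(0^{+}\) until it hits \(\rho\). Its generator is \(\tfrac12\partial_{xx}+\nu\coth(\nu x)\,\partial_{x}\). The drift satisfies \(\nu\coth(\nu x)\ge\nu\). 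The comparison theorem for one-dimensional SDEs driven by the same Brownian motion then puts the \(h\)-process above \(Z\) started from \(0\), pathwise, up to hitting \(\rho\). Hence \(H_{n}-D_{n}\preccurlyeq\sigma_{\rho}^{+}\). The entrance at \(0^{+}\) is handled by starting both processes at \(\varepsilon\) and letting \(\varepsilon\downarrow0\).

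\textbf{Step 2 (the event \(\tau_{1}=H_{n}\)).} Let \(B_{m}\) be the event that \(U\) drops below \(U_{H_{m}}-R\) during \([H_{m},H_{n}]\). On \(G_{n}\) the path never goes below \(a-R<U_{H_{m}}-R\) after \(H_{n}\). Therefore
\[\{\tau_{1}=H_{n}\}=G_{n}\cap\bigcap_{m<n}B_{m},\qquad \bigcap_{m<n}B_{m}\in\mathcal{F}_{H_{n}}.\]
Since \(G_{n}\) is independent of \(\mathcal{F}_{H_{n}}\) and \(H_{n}-D_{n}\) is \(\mathcal{F}_{H_{n}}\)-measurable,
\[\mathbb{P}\bigl(\Theta_{1}^{-}>x,\ \tau_{1}=H_{n}\bigr)=p_{R}\,\mathbb{P}\Bigl(H_{n}-D_{n}>x,\ \bigcap_{m<n}B_{m}\Bigr).\]
The remaining task is to show
\[\mathbb{P}\Bigl(H_{n}-D_{n}>x,\ \bigcap_{m<n}B_{m}\Bigr)\le\mathbb{P}(\sigma_{\rho}^{+}>x)\,\mathbb{P}\Bigl(\bigcap_{m<n}B_{m}\Bigr).\]
Summing over \(n\) then gives the claim for \(k=1\). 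For general \(k\), the same argument applies to the shifted process, using Theorem~\ref{thm:4.5} and the strong Markov property at \(\tau_{k-1}\).

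\textbf{Main obstacle.} The delicate point is the factorization above, because some of the events \(B_{m}\) can be decided inside the final crossing window. This happens exactly for those \(m\) with \(U_{H_{m}}-R>a-\rho\). My first attempt is to split \(\bigcap_{m<n}B_{m}\) into the part decided before \(D_{n}\), which is independent of the crossing by Step 1, and the part decided during \([D_{n},H_{n}]\). For the latter, note that requiring a drop by \(R\) during the crossing only lengthens it. This points the wrong way for domination, so independence alone is not enough here.

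If the splitting fails, I will fall back on a cruder bound. I will dominate \(\Theta^{-}\) by \(H_{n}-\sigma^{(n)}\), where \(\sigma^{(n)}\) is the first hitting of the level \(a-\rho\) after the last time the relevant \(B_{m}\) is realized. That random time is a stopping time, so the strong Markov property gives a fresh hitting time over a distance at most \(\rho\). Choosing it requires care, and this bookkeeping is where I expect the real work to lie. The moment bounds in the statement then follow immediately from \eqref{eq:4.8} and Lemma~\ref{lem:4.1}.
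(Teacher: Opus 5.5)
\textbf{Where the proposal stands.} Your Step 1 is correct. It is the rigorous form of the paper's own argument. The paper invokes a strong Markov property at $L_k(\rho)$, which is a last-exit time and not a stopping time, and it merely asserts that removing the conditioning slows the climb. The last-exit decomposition together with the comparison $\nu\coth(\nu x)\ge\nu$ is what actually justifies this. Your Step 2 also correctly isolates a selection effect, coming from how the regeneration level is chosen, that the paper never addresses.

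\textbf{The gap.} The proposal stops at the one inequality that carries the content,
\[
\mathbb{P}\Bigl(H_n-D_n>x,\ \bigcap_{m<n}B_m\Bigr)\le\mathbb{P}(\sigma_\rho^+>x)\,\mathbb{P}\Bigl(\bigcap_{m<n}B_m\Bigr),
\]
and neither of your routes proves it. The splitting points the wrong way, as you say. The fallback fails as sketched. Suppose the last required drop occurs at a time $\kappa\in(D_n,H_n]$. By definition of $D_n$, the path does not revisit $a-\rho$ before $H_n$, so $\sigma^{(n)}>H_n$ and $H_n-\sigma^{(n)}$ does not bound $\Theta^-$. This is exactly the bad event. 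Your empty-supremum case has the same defect. On that event the crossing is a hitting time conditioned to avoid $a-\rho$ (and to realize the $B_m$), not a plain one, so monotonicity of $a\mapsto\sigma_a^+$ does not apply as stated.

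\textbf{What closes it for small $\rho$.} By your own criterion the obstacle disappears when $\rho$ is small. A drop below $U_{H_m}-R$ can occur in $(D_n,H_n]$ only if $U_{H_m}-R>a-\rho$, and for $m\le n-1$ this requires $\rho>L+R$. Take $\rho\le L$, which covers the only use of the proposition later in the paper, namely $\rho=a<L/2$.
\begin{itemize}
\item The level $a-\rho\ge U_0+(n-1)L$ is always visited before $H_n$.
\item Every $H_m\le D_n$.
\item Every required drop happens before $D_n$, because $U>a-\rho$ on $(D_n,H_n]$.
\end{itemize}
Hence $\bigcap_{m<n}B_m$ is a functional of the pre-$D_n$ path, and the last-exit decomposition makes it independent of $H_n-D_n$. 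Summing your Step 2 identity over $n$ then gives
\[
\mathbb{P}(\Theta_1^->x)\le\mathbb{P}(\sigma_\rho^+>x)\sum_n\mathbb{P}(\tau_1=H_n)=\mathbb{P}(\sigma_\rho^+>x).
\]
The same argument works for $\rho\le L+R$, with a separate $h$-transform comparison on the event that $a-\rho$ is never visited.

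For $\rho>L+R$, the event $\{\tau_1=H_n\}$ forces drops inside the final crossing, which lengthens it. Neither you nor the paper controls this. You should either restrict the claim or find a new idea.

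\textbf{The reduction to $k=1$.} Passing from $k=1$ to general $k$ ``by the strong Markov property at $\tau_{k-1}$'' is not valid as stated, for three reasons.
\begin{itemize}
\item $\tau_{k-1}$ is not a stopping time.
\item Given the past, the post-$\tau_{k-1}$ longitudinal path is the drifted motion conditioned never to drop below $-R$, not a fresh copy.
\item For $\rho>U_{\tau_k}-U_{\tau_{k-1}}$, the window reaches into earlier blocks.
\end{itemize}
For $\rho\le L$ this is repaired by rerunning Step 2 with one extra event: the shifted path stays $\ge -R$ up to $H_n$. That event is again decided before $D_n$, so the factorization goes through unchanged.
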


\begin{proof}
Let

\[L_{k}(\rho) \coloneqq \sup\{ s \leq \tau_{k}:\ U_{s} = U_{\tau_{k}} - \rho\},\]

so that \(\Theta_{k}^{-}(\rho) = \tau_{k} - L_{k}(\rho)\). By
definition, on the interval
\(\left[ L_{k}(\rho),\tau_{k} \right]\) the process makes a
climb of height \(\rho\), and \(L_{k}(\rho)\) is the last visit to the
lower level before the cut. By the strong Markov property at
\(L_{k}(\rho)\), the shifted post-\(L_{k}(\rho)\) process is a drifted
Brownian motion started from \(0\), conditioned to hit \(\rho\) before
any future return to \(0\). Removing this conditioning can only increase
the hitting time. Therefore the duration of the conditioned climb is
stochastically dominated by the unconditioned upward hitting time
\(\sigma_{\rho}^{+}\). The moment bounds follow from Lemma~\ref{lem:4.1}.
\end{proof}

This domination is exactly the kind of estimate we will use in Section~\ref{law-of-large-numbers-for-persistence-intensity}
when showing that the overlap between neighboring block sausages is
generated inside a random time window of integrable size.

\subsection{Renewal counting}\label{renewal-counting}

Finally, define the regeneration counting process

\[N(T) \coloneqq \max\{ k \geq 0:\ \tau_{k} \leq T\},\quad\quad T \geq 0.\]

Since \(\left( \Delta\tau_{k} \right)_{k \geq 1}\) is i.i.d. with finite
mean, standard renewal theory (\cite{ref1}) gives:

\begin{proposition}[renewal law of large numbers]
\label{prop:4.9}
Almost surely and in \(L^{1}\),

\begin{equation}\tag{4.9}\label{eq:4.9}
\frac{N(T)}{T} \rightarrow \frac{1}{\mathbb{E}\left[ \tau_{1} \right]} = \frac{\nu\left( 1 - e^{- 2\nu R} \right)}{L},\quad\quad T \rightarrow \infty.
\end{equation}
\end{proposition}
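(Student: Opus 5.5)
The statement is the elementary renewal theorem applied to the regeneration sequence built in Theorem~\ref{thm:4.5}, so the plan reduces it to the i.i.d. structure plus one integrability check. By Theorem~\ref{thm:4.5}, the increments \(\Delta\tau_{k} = \tau_{k} - \tau_{k-1}\), \(k \geq 1\), are i.i.d. with the law of \(\tau_{1}\). They are strictly positive, since each block requires the longitudinal coordinate to advance by at least \(L > 0\). By Proposition~\ref{prop:4.6} they have finite mean \(m \coloneqq \mathbb{E}[\tau_{1}] = L/(\nu p_{R})\). Hence \(\tau_{k} = \sum_{j \leq k}\Delta\tau_{j}\) is an ordinary (non-delayed) renewal sequence, because \(\tau_{0} = 0\), and \(N(T)\) is its counting process.

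For the almost-sure statement I would run the standard sandwich argument.
\begin{itemize}
\item The strong law of large numbers gives \(\tau_{k}/k \to m\) almost surely.
\item Since \(\tau_{k} < \infty\) for all \(k\), we have \(N(T) \to \infty\) almost surely as \(T \to \infty\).
\item By the definition of \(N(T)\), \(\tau_{N(T)} \leq T < \tau_{N(T)+1}\), so
\[\frac{\tau_{N(T)}}{N(T)} \leq \frac{T}{N(T)} < \frac{\tau_{N(T)+1}}{N(T)+1}\cdot\frac{N(T)+1}{N(T)}.\]
\item Both outer ratios tend to \(m\) almost surely, which gives \(N(T)/T \to 1/m\) almost surely.
\end{itemize}
Substituting \(m = L/(\nu(1 - e^{-2\nu R}))\) from \eqref{eq:4.5} yields the explicit constant in \eqref{eq:4.9}.

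For the \(L^{1}\) convergence, almost-sure convergence together with uniform integrability of \(\{N(T)/T\}_{T \geq 1}\) suffices. I would obtain uniform integrability by bounding \(\sup_{T\geq 1}\mathbb{E}[(N(T)/T)^{2}]\). The simplest route is a comparison. Each block contains at least one level increment \(J_{n} \sim \sigma_{L}^{+}\), so \(\Delta\tau_{k}\) stochastically dominates \(\sigma_{L}^{+}\). Moreover, the increments \(J_{n}\) along the whole path are i.i.d. by Proposition~\ref{prop:4.4}, and every regeneration time is a level-hitting time \(H_{n}\). Hence \(N(T) \leq M(T) \coloneqq \max\{n : H_{n} \leq T\}\), the counting process of the i.i.d. renewal sequence \((H_{n})\).

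The main point requiring care is the second-moment bound. I would get it from a Chernoff estimate on \(H_{n}\):
\[\mathbb{P}(M(T) \geq n) = \mathbb{P}(H_{n} \leq T) \leq e^{\lambda T}\,\mathbb{E}\bigl[e^{-\lambda\sigma_{L}^{+}}\bigr]^{n}.\]
By \eqref{eq:4.1}, \(\mathbb{E}[e^{-\lambda\sigma_{L}^{+}}] < 1\) for any fixed \(\lambda > 0\). Choosing \(n \geq cT\) with \(c\) large then gives a geometric tail for \(M(T)/T\), uniformly in \(T \geq 1\). This yields \(\sup_{T \geq 1}\mathbb{E}[(M(T)/T)^{2}] < \infty\), hence the same bound for \(N(T)/T\), hence uniform integrability, and \(L^{1}\) convergence follows.

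Alternatively, I could simply cite the elementary renewal theorem \(\mathbb{E}[N(T)]/T \to 1/m\) from \cite{ref1}. Combined with almost-sure convergence and nonnegativity, Scheffé's lemma then gives \(L^{1}\) convergence directly.
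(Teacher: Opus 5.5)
Your argument is correct given the results the paper states earlier, and its almost-sure part is the paper's proof: the strong law for $\tau_k/k$, then the inversion $\tau_{N(T)}\le T<\tau_{N(T)+1}$. The paper gives no argument for the $L^{1}$ part beyond invoking ``standard renewal theory'', i.e.\ the elementary renewal theorem, which is your Scheff\'e fallback. Your main route differs: you bound $N(T)\le M(T)$ and use a Chernoff estimate on $H_n=\sum_{j\le n}J_j$ to get $\sup_{T\ge 1}\mathbb{E}[(M(T)/T)^2]<\infty$. This is self-contained. More usefully, it relies only on the i.i.d.\ increments $J_n$, which come from genuine stopping times, and not on the block structure of Theorem~\ref{thm:4.5}.

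That independence from Theorem~\ref{thm:4.5} matters, because the block structure is shakier than the paper, and hence your first step, assume. The time $\tau_1$ is not a stopping time, since $G_n$ depends on the whole path after $H_n$. So the post-$\tau_k$ path is the drifted motion \emph{conditioned} never to backtrack by more than $R$, and the blocks are i.i.d.\ only from $k=2$ on. The $G_n$ are also positively correlated: a direct computation gives $\mathbb{P}(G_1\cap G_2)>p_R^2$. Hence $N_1$ is not geometric, and \eqref{eq:4.5} is not justified as written. Your sandwich argument still works for a delayed sequence, but it identifies the limit as $1/\mathbb{E}[\Delta\tau_2]$, which need not equal $1/\mathbb{E}[\tau_1]$. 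Your domination idea recovers the stated constant cleanly, in four steps. First, the regeneration levels are exactly the good levels, so $N(T)=\sum_{n=1}^{M(T)}\mathbf{1}_{G_n}$. Second, $M(T)/T\to\nu/L$ almost surely. Third, $(\mathbf{1}_{G_n})_{n\ge 1}$ is a stationary factor of the i.i.d.\ level pieces, hence ergodic with mean $p_R$. Together these give $N(T)/T\to\nu p_R/L$ almost surely without using \eqref{eq:4.5}. Finally, your uniform bound on $M(T)/T$ upgrades this to $L^{1}$ convergence.
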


\begin{proof}
By the strong law of large numbers,

\[\frac{\tau_{n}}{n} = \frac{1}{n}\sum_{k = 1}^{n}\Delta\tau_{k}\to \mathbb{E}\left[ \tau_{1} \right]\quad\quad\text{a.s.}\]

The convergence of \(N(T)/T\) is the standard inverse relation between a
renewal process and its partial sums.
\end{proof}

This proposition will later convert ``persistence per block'' into
``persistence per unit time.''

\section{Law of large numbers for persistence intensity}\label{law-of-large-numbers-for-persistence-intensity}

We now combine the persistence formalism of Section~\ref{setup-and-preliminaries} with the
regeneration structure of Section~\ref{regeneration-structure-for-drifted-planar-brownian-motion}. The goal is to prove the first
large-time limit theorem for persistence of the Wiener sausage in the
drifted planar case.

The main difficulty is that persistence is not literally additive under
concatenation of path pieces. The remedy has two parts. First, one
rewrites the persistence functional in terms of the Betti curve
\(r \mapsto \beta_{1}(r)\), so that one only has to control topology at
a fixed radius. Second, one uses the regeneration cuts to localize all
non-additivity to narrow interface slabs, whose contribution has finite
mean.

Throughout this section, \(X_{t}\) is the same planar Brownian motion
with nonzero drift \(\mu\), and \(\left( \tau_{k} \right)_{k \geq 0}\)
are the same regeneration times, as in Section~\ref{regeneration-structure-for-drifted-planar-brownian-motion}. We fix once and for
all a compact radius window \(0 < r_{0} < r_{1} < \infty\) and a bounded
Borel weight \(\psi\) supported in
\(\left[ r_{0},r_{1} \right]\). Recall from Subsection~\ref{persistence-counting-measures-betti-curves-and-smoothed-functionals} that
for \(\phi_{\psi}(b,d) = \int_{b}^{d}\psi(r)\, dr\),

\begin{equation}\tag{5.1}\label{eq:5.1}
\Phi_{\psi}(T) = \int_{r_{0}}^{r_{1}}\beta_{1}^{T}(r)\,\psi(r)\, dr.
\end{equation}

We will use this representation in this section to reduce the
persistence problem to fixed-radius hole counts.

\subsection{Block sausages and interface slabs}\label{block-sausages-and-interface-slabs}

Fix the regeneration parameters \(L > 0\) and \(R > 0\) from Section~\ref{regeneration-structure-for-drifted-planar-brownian-motion}.
Choose a constant

\[a > R + 2r_{1}\]

such that

\[L > 2a.\]

This separation condition ensures that, for radii
\(r \in \left[ r_{0},r_{1} \right]\), only adjacent
regeneration blocks can interact, and it is strong enough to localize
the overlap exactly to local pre-cut and post-cut windows.

For \(k \geq 1\), define the \(k\)-th regeneration range by

\[\mathcal{K}_{k} \coloneqq X\left( \left[ \tau_{k - 1},\tau_{k} \right] \right),\]

and for \(r \in \left[ r_{0},r_{1} \right]\), its
\(r\)-sausage by

\[\mathcal{S}_{k}(r) \coloneqq \mathcal{K}_{k}^{(r)}.\]

For \(n \geq 1\), write

\[\mathcal{U}_{n}(r) \coloneqq \underset{k = 1}{\bigcup^{n}}\mathcal{S}_{k}(r) = K_{\tau_{n}}^{(r)}.\]

For each cut time \(\tau_{k}\), define the interface slab

\[\Sigma_{k} \coloneqq \{ x \in \mathbb{R}^{2}:\ \left| \langle x,e\rangle - U_{\tau_{k}} \right| \leq a\}.\]

The next lemma records the basic geometric consequences of the good-cut
property.

\begin{lemma}[localization of overlaps]
\label{lem:5.1}
For every \(r \in \left[ r_{0},r_{1} \right]\)
and every \(k \geq 1\),

\[\mathcal{S}_{k}(r) \cap \mathcal{S}_{k + 1}(r) \subset \Sigma_{k}.\]

Moreover, if \(|j - k| \geq 2\), then

\[\mathcal{S}_{k}(r) \cap \mathcal{S}_{j}(r) = \varnothing.\]
\end{lemma}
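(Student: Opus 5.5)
The plan is to reduce both claims to one-dimensional statements about the longitudinal coordinate \(U_{t} = \langle X_{t},e\rangle\). The key fact is that each block range \(\mathcal{K}_{k}\) lives in a known longitudinal strip. Thickening by \(r \leq r_{1}\) moves the \(e\)-coordinate by at most \(r\), because \(\left| \langle x,e\rangle - \langle y,e\rangle \right| \leq \parallel x - y \parallel\). Hence \(\mathcal{S}_{k}(r)\) lives in that strip enlarged by \(r_{1}\) on each side.

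\textbf{Step 1 (upper bound on a block).} I will show \(U_{t} \leq U_{\tau_{k}}\) for all \(t \leq \tau_{k}\). By construction \(\tau_{k} = \tau_{k-1} + H^{(k-1)}_{N_{k}}\) is the \emph{first} time after \(\tau_{k-1}\) at which \(U\) reaches the level \(U_{\tau_{k-1}} + LN_{k}\), so \(U_{t} < U_{\tau_{k}}\) on \([\tau_{k-1},\tau_{k})\). For \(t \leq \tau_{k-1}\) I argue inductively: \(U_{t} \leq U_{\tau_{k-1}} < U_{\tau_{k}}\), with base case \(\tau_{0} = 0\). The only points \(t \leq \tau_{0}\) are \(t = 0\), and then \(U_{t} = U_{0} = U_{\tau_{0}}\). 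This first-hitting monotonicity is the step that needs care. I will spell it out, since the paper defines \(H_{n}^{(k)}\) only via the shifted process, and the bound must hold for all earlier times, not only inside the current block.

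\textbf{Step 2 (lower bound on a block).} The good-cut event at \(\tau_{k-1}\) gives \(U_{t} \geq U_{\tau_{k-1}} - R\) for all \(t \geq \tau_{k-1}\); for \(k=1\) I will use the analogous bound at \(\tau_{0}=0\) or treat the first block separately. Together with Step 1,
\[\langle \mathcal{K}_{k},e\rangle \subset \left[ U_{\tau_{k-1}} - R,\ U_{\tau_{k}} \right],\quad\quad \langle \mathcal{S}_{k}(r),e\rangle \subset \left[ U_{\tau_{k-1}} - R - r_{1},\ U_{\tau_{k}} + r_{1} \right].\]

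\textbf{Step 3 (adjacent overlap).} If \(x \in \mathcal{S}_{k}(r) \cap \mathcal{S}_{k+1}(r)\), Step 2 applied to block \(k\) gives \(\langle x,e\rangle \leq U_{\tau_{k}} + r_{1}\). Applied to block \(k+1\) it gives \(\langle x,e\rangle \geq U_{\tau_{k}} - R - r_{1}\). Since \(a > R + 2r_{1}\), both bounds lie within distance \(a\) of \(U_{\tau_{k}}\), so \(x \in \Sigma_{k}\).

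\textbf{Step 4 (non-adjacent blocks are disjoint).} Take \(j \geq k+2\). Then \(U_{\tau_{j-1}} \geq U_{\tau_{k+1}} = U_{\tau_{k}} + LN_{k+1} \geq U_{\tau_{k}} + L\). Points of \(\mathcal{K}_{j}\) therefore have \(e\)-coordinate at least \(U_{\tau_{k}} + L - R\), while points of \(\mathcal{K}_{k}\) have \(e\)-coordinate at most \(U_{\tau_{k}}\). Any \(y \in \mathcal{K}_{k}\) and \(z \in \mathcal{K}_{j}\) thus satisfy \(\parallel y - z \parallel \geq L - R\). Using \(L > 2a > 2R + 4r_{1}\), we get \(L - R > R + 4r_{1} > 2r\). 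Two closed \(r\)-balls with centres more than \(2r\) apart are disjoint, so \(\mathcal{S}_{k}(r) \cap \mathcal{S}_{j}(r) = \varnothing\). All steps are deterministic on the almost-sure event that all \(\tau_{k}\) are finite and all cuts are good.
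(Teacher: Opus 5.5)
Your proof is correct and takes essentially the same route as the paper. You get the upper bound $U_t \le U_{\tau_k}$ on block $k$ from the first-hitting (ladder) property and the lower bounds on later blocks from the good cuts at $\tau_k$ (resp.\ $\tau_{j-1}$), then conclude with $a > R + 2r_1$ and $L > 2a$; your Step~1 usefully justifies the fact ``block $k$ ends at level $U_{\tau_k}$'', which the paper uses without comment. The one slip is the hedge in Step~2 for $k=1$: time $\tau_0 = 0$ is not a good cut, so block $1$ can backtrack arbitrarily far below $U_0$ and the ``analogous bound'' is false, but you never use it, since Steps~3 and~4 only need lower bounds for blocks $k+1 \ge 2$ and $j \ge 3$.
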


\begin{proof}
Fix \(r \in \left[ r_{0},r_{1} \right]\). If
\(x \in \mathcal{S}_{k}(r)\), then \(x\) lies within distance \(r\) of
some point of the \(k\)-th block. Since the \(k\)-th block ends at level
\(U_{\tau_{k}}\), we have

\[\langle x,e\rangle \leq U_{\tau_{k}} + r \leq U_{\tau_{k}} + r_{1} < U_{\tau_{k}} + a.\]

If \(x \in \mathcal{S}_{k + 1}(r)\), then \(x\) lies within distance
\(r\) of some point visited after \(\tau_{k}\). By the good-cut property
at \(\tau_{k}\), every such point has longitudinal coordinate at least
\(U_{\tau_{k}} - R\). Hence

\[\langle x,e\rangle \geq U_{\tau_{k}} - R - r \geq U_{\tau_{k}} - R - r_{1} > U_{\tau_{k}} - a.\]

This proves

\[\mathcal{S}_{k}(r) \cap \mathcal{S}_{k + 1}(r) \subset \Sigma_{k}.\]

Now suppose \(|j - k| \geq 2\). Without loss of generality
\(j \geq k + 2\). Every point of block \(j\) occurs after
\(\tau_{k + 1}\), so by the good-cut property at \(\tau_{k + 1}\),

\[\langle x,e\rangle \geq U_{\tau_{k + 1}} - R\quad\quad\text{for all }x \in \mathcal{K}_{j}.\]

Since \(U_{\tau_{k + 1}} - U_{\tau_{k}} \geq L\), every point of
\(\mathcal{S}_{j}(r)\) has longitudinal coordinate at least

\[U_{\tau_{k}} + L - R - r.\]

On the other hand, every point of \(\mathcal{S}_{k}(r)\) has
longitudinal coordinate at most

\[U_{\tau_{k}} + r.\]

Therefore the two sets are disjoint as soon as

\[L - R - 2r > 0.\]

Because \(r \leq r_{1}\) and \(L > 2a > R + 2r_{1}\), this holds
uniformly on \(\left[ r_{0},r_{1} \right]\).
\end{proof}

Thus, at the radii we care about, the block sausages form a
one-dimensional chain: only neighboring blocks may overlap, and that
overlap lives in a fixed-width slab.

\subsection{A deterministic Mayer--Vietoris bound}\label{a-deterministic-mayervietoris-bound}

We next record the fixed-radius inequality that controls the topological
error when two sausages are glued together.

\begin{lemma}[Mayer--Vietoris bound in degree one]
\label{lem:5.2}
Let \(A,B \subset \mathbb{R}^{2}\) be compact sets such
that \(A\), \(B\), \(A \cap B\), and \(A \cup B\)
have finite Betti numbers. Then

\begin{equation}\tag{5.2}\label{eq:5.2}
- \beta_{1}(A \cap B) \leq \beta_{1}(A \cup B) - \beta_{1}(A) - \beta_{1}(B) \leq \beta_{0}(A \cap B).
\end{equation}

In particular,

\begin{equation}\tag{5.3}\label{eq:5.3}
|\beta_{1}(A \cup B) - \beta_{1}(A) - \beta_{1}(B)| \leq \beta_{1}(A \cap B) + \beta_{0}(A \cap B).
\end{equation}
\end{lemma}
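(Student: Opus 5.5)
The plan is to read both inequalities off the degree-one portion of the Mayer--Vietoris long exact sequence for the couple \((A,B)\). Everything reduces to rank counting in an exact sequence of finite-dimensional \(\mathbb{k}\)-vector spaces.

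The relevant segment, in unreduced homology, is
\[
H_{1}(A\cap B)\xrightarrow{\ \alpha\ } H_{1}(A)\oplus H_{1}(B)\xrightarrow{\ \beta\ } H_{1}(A\cup B)\xrightarrow{\ \partial\ } H_{0}(A\cap B).
\]
Unreduced \(H_0\) is used so that the case \(A\cap B=\varnothing\) needs no separate treatment; since reduced \(\beta_0\) is at most unreduced \(\beta_0\), the unreduced bound is the weaker and safer statement. All four spaces are finite-dimensional by hypothesis.

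For the upper bound, apply rank--nullity to \(\partial\). This gives \(\beta_{1}(A\cup B)=\rankop(\ker\partial)+\rankop(\im\partial)\). By exactness \(\ker\partial=\im\beta\), which has rank at most \(\beta_{1}(A)+\beta_{1}(B)\). Also \(\rankop(\im\partial)\le\beta_{0}(A\cap B)\). Together these give the right-hand inequality of \eqref{eq:5.2}.

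For the lower bound, apply rank--nullity to \(\beta\). This gives \(\beta_{1}(A)+\beta_{1}(B)=\rankop(\ker\beta)+\rankop(\im\beta)\). By exactness \(\ker\beta=\im\alpha\), which has rank at most \(\beta_{1}(A\cap B)\). Also \(\rankop(\im\beta)\le\beta_{1}(A\cup B)\). Rearranging gives the left-hand inequality, and \eqref{eq:5.3} follows immediately by taking absolute values.

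The main obstacle is not the algebra but the existence of the exact sequence. For singular homology, Mayer--Vietoris requires an excisive couple, for instance one where the interiors of \(A\) and \(B\) (relative to \(A\cup B\)) cover \(A\cup B\). Arbitrary compact \(A,B\) do not satisfy this, and the sequence can genuinely fail for wild compacta.

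I would handle this in one of two ways.
\begin{itemize}
\item First, I would check directly that in the intended application every set involved is excisive. There \(A,B\) are \(r\)-sausages \(\mathcal{S}_k(r)\) or unions of them, and \(A\cap B\) is a compact subset of the plane. Closed sets in \(\mathbb{R}^2\) of this kind admit neighborhoods that deformation retract onto them; such a couple is excisive, so the sequence holds.
\item If that route is unavailable, I would work instead with Čech homology, which satisfies Mayer--Vietoris for arbitrary compact pairs with field coefficients and agrees with singular homology on the tame sets considered. I would then state explicitly that \(\beta_q\) in Lemma~\ref{lem:5.2} is understood in that sense, or under this excisiveness assumption.
\end{itemize}
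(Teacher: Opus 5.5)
Your proof is correct and is essentially the paper's: the paper uses the same Mayer--Vietoris segment (with unreduced $H_0(A\cap B)$; your $\partial$ is its $\gamma$) and derives the identity $\beta_1(A\cup B)-\beta_1(A)-\beta_1(B)=\dim\im\gamma-\dim\im\alpha$, which is your two rank--nullity steps combined. Your excisiveness caveat is a legitimate point that the paper passes over, since it simply invokes the sequence for arbitrary compact sets, and of your two remedies the \v{C}ech-homology reading closes it without further work, whereas the first still requires verifying the neighborhood-retract property for the intersections of offsets.
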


\begin{proof}
Consider the Mayer--Vietoris exact sequence

\[H_{1}(A \cap B)\overset{\alpha}{\rightarrow}H_{1}(A) \oplus H_{1}(B)\overset{\beta}{\rightarrow}H_{1}(A \cup B)\overset{\gamma}{\rightarrow}H_{0}(A \cap B).\]

Exactness gives

\[\dim H_{1}(A \cup B) = \dim \im\beta + \dim \im\gamma.\]

Also,

\[\dim \im\beta = \beta_{1}(A) + \beta_{1}(B) - \dim \im\alpha.\]

Therefore

\[\beta_{1}(A \cup B) - \beta_{1}(A) - \beta_{1}(B) = - \dim \im\alpha + \dim \im\gamma.\]

Since

\[\dim \im\alpha \leq \beta_{1}(A \cap B),\quad\quad \dim \im\gamma \leq \beta_{0}(A \cap B),\]

the claimed inequalities follow.
\end{proof}

Integrating over the radius window gives the interface control we
actually use.

\begin{corollary}[integrated interface bound]
\label{cor:5.3}
For compact sets \(A,B \subset \mathbb{R}^{2}\), define

\[\mathcal{I}_{\psi}(A,B) \coloneqq \int_{r_{0}}^{r_{1}}\left[ \beta_{1}\left( A^{(r)} \cap B^{(r)} \right) + \beta_{0}\left( A^{(r)} \cap B^{(r)} \right) \right] \lvert \psi(r) \rvert \, dr.\]

Then

\begin{equation}\tag{5.4}\label{eq:5.4}
|\Phi_{\psi}(A \cup B) - \Phi_{\psi}(A) - \Phi_{\psi}(B)| \leq \mathcal{I}_{\psi}(A,B),
\end{equation}

where

\[\Phi_{\psi}(A) \coloneqq \int_{r_{0}}^{r_{1}}\beta_{1}\left( A^{(r)} \right)\psi(r)\, dr.\]
\end{corollary}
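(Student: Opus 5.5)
The plan is to apply Lemma~\ref{lem:5.2} pointwise in the radius and then integrate. The first observation is that offsets commute with unions: for every \(r \geq 0\),
\[(A \cup B)^{(r)} = A^{(r)} \cup B^{(r)},\]
because \(\dist(x,A\cup B) = \min(\dist(x,A),\dist(x,B))\). No such identity holds for intersections, and none is needed. The intersection term in the Mayer--Vietoris sequence for the pair \(A^{(r)}, B^{(r)}\) is exactly \(A^{(r)}\cap B^{(r)}\), which is the set appearing in \(\mathcal{I}_{\psi}\). So for each fixed \(r\in[r_0,r_1]\), (5.3) applied to the compact sets \(A^{(r)}\) and \(B^{(r)}\) gives
\[\bigl|\beta_1\bigl((A\cup B)^{(r)}\bigr)-\beta_1(A^{(r)})-\beta_1(B^{(r)})\bigr| \le \beta_1\bigl(A^{(r)}\cap B^{(r)}\bigr)+\beta_0\bigl(A^{(r)}\cap B^{(r)}\bigr),\]
as long as the finiteness hypotheses of Lemma~\ref{lem:5.2} hold at that \(r\).

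Next I multiply this inequality by \(|\psi(r)|\) and integrate over \([r_0,r_1]\). The left-hand side becomes the difference of the three \(\Phi_\psi\) values, using
\[\Bigl|\int (f-g-h)\psi\Bigr| \le \int |f-g-h|\,|\psi|.\]
The right-hand side is \(\mathcal{I}_\psi(A,B)\) by definition, which gives (5.4).

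The main obstacle is the hypothesis handling. Two things must be checked.
\begin{enumerate}
\item Lemma~\ref{lem:5.2} requires finite Betti numbers, and ordinary singular homology and Mayer--Vietoris need an excisive couple.
\item The integrands must be measurable in \(r\), so that the integrals make sense.
\end{enumerate}

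For the first point, Proposition~\ref{prop:2.1} together with the local finiteness of the diagram on windows bounded away from \(0\) shows that \(\beta_1(A^{(r)})\), \(\beta_1(B^{(r)})\) and \(\beta_1((A\cup B)^{(r)})\) are finite for \(r\ge r_0>0\). For the intersection I would simply adopt the convention that if \(\beta_0\) or \(\beta_1\) of \(A^{(r)}\cap B^{(r)}\) is infinite, the right-hand side is \(+\infty\) at that \(r\) and the bound is trivial there. Indeed, the exact-sequence argument of Lemma~\ref{lem:5.2} only needs the ranks of the outer terms, and \(\dim\im\alpha\le\beta_1\) and \(\dim\im\gamma\le\beta_0\) hold in \([0,\infty]\). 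The same convention makes (5.4) trivially true whenever \(\mathcal{I}_\psi=\infty\).

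For excisiveness, I would replace each closed offset by the open offset \(\{d<r+\epsilon\}\). Open sets always form an excisive couple, and the closed offset is a deformation retract of these open offsets for all but countably many \(r\), namely the non-critical values in the sense of the distance function. Alternatively, one can use Čech or Alexander--Spanier homology, for which Mayer--Vietoris holds for closed sets. Changing the integrands on a countable, hence Lebesgue-null, set of radii does not affect the integrals.

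Measurability of \(r\mapsto\beta_q(A^{(r)})\) follows from the interval representation \(\beta_q=\mu(\{b\le r<d\})\), a countable sum of indicators of intervals. For the intersection term I would argue that it is measurable as a countable combination of rank functions of the nested family \(A^{(r)}\cap B^{(r)}\), which is again a filtration. If that proves delicate, I would interpret the integral in \(\mathcal{I}_\psi\) as an upper integral, which suffices for the inequality.
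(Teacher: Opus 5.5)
Your core argument is the paper's one-line proof: use \((A\cup B)^{(r)}=A^{(r)}\cup B^{(r)}\), apply Lemma~\ref{lem:5.2} to \(A^{(r)}\) and \(B^{(r)}\) at each fixed \(r\), multiply by \(\lvert\psi(r)\rvert\), and integrate. The paper does no hypothesis checking at all, so your additional remarks go beyond it, but two of them are inaccurate as stated.

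First, \(q\)-tameness does not give \(\beta_1(A^{(r)})<\infty\) at every \(r\ge r_0\). Diagram points of a planar offset filtration can accumulate at a diagonal point \((r,r)\) inside \(\Delta[r_0,r_1]\), so the paper's local-finiteness sentence is itself too strong. Finiteness is only guaranteed for \(r\) outside the critical values of the relevant distance functions. In the plane that exceptional set is Lebesgue-null, though not necessarily countable, which is still enough for the integrals.

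Second, replacing closed offsets by open ones does not control the intersection term. The set \(\{d_A<r+\varepsilon\}\cap\{d_B<r+\varepsilon\}\) is a sublevel set of \(\max(d_A,d_B)\), to which the isotopy lemma for distance functions does not apply. Its Betti numbers therefore need not equal those of \(A^{(r)}\cap B^{(r)}\). The route that works is the one you also mention: the \v{C}ech/Alexander--Spanier Mayer--Vietoris sequence for closed sets, with the intersection's Betti numbers read in that theory.
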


\begin{proof}
Apply Lemma~\ref{lem:5.2} to \(A^{(r)}\) and \(B^{(r)}\) for each
fixed \(r\), multiply by \(\lvert \psi(r) \rvert\), and integrate over
\(\left[ r_{0},r_{1} \right]\).
\end{proof}

Applying the corollary recursively to the chain
\(\mathcal{S}_{1}(r),\ldots,\mathcal{S}_{n}(r)\) and using Lemma~\ref{lem:5.1}
yields the following deterministic almost-additivity estimate.

\begin{proposition}[deterministic almost-additivity at
regeneration times]
\label{prop:5.4}
For every \(n \geq 1\),

\begin{equation}\tag{5.5}\label{eq:5.5}
\left| \Phi_{\psi}\left( \tau_{n} \right) - \sum_{k = 1}^{n}Y_{k} \right| \leq \sum_{k = 1}^{n - 1}I_{k},
\end{equation}

where

\[Y_{k} \coloneqq \int_{r_{0}}^{r_{1}}\beta_{1}\left( \mathcal{S}_{k}(r) \right)\,\psi(r)\, dr\]

and

\[I_{k} \coloneqq \int_{r_{0}}^{r_{1}}\left[ \beta_{1}\left( \mathcal{S}_{k}(r) \cap \mathcal{S}_{k + 1}(r) \right) + \beta_{0}\left( \mathcal{S}_{k}(r) \cap \mathcal{S}_{k + 1}(r) \right) \right] \lvert \psi(r) \rvert \, dr.\]
\end{proposition}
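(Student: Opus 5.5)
We argue by induction on \(n\), gluing one block at a time and applying Lemma~\ref{lem:5.2} at each fixed radius \(r \in [r_{0},r_{1}]\) before integrating against \(\psi\). For \(n=1\) there is nothing to prove, since \(\Phi_{\psi}(\tau_{1}) = Y_{1}\) because \(K_{\tau_{1}}^{(r)} = \mathcal{S}_{1}(r)\). For the inductive step, write \(\mathcal{U}_{n+1}(r) = \mathcal{U}_{n}(r) \cup \mathcal{S}_{n+1}(r)\). We then apply the Mayer--Vietoris bound \eqref{eq:5.3} with \(A = \mathcal{U}_{n}(r)\) and \(B = \mathcal{S}_{n+1}(r)\):
\[
\bigl|\beta_{1}(\mathcal{U}_{n+1}(r)) - \beta_{1}(\mathcal{U}_{n}(r)) - \beta_{1}(\mathcal{S}_{n+1}(r))\bigr| \leq \beta_{1}(A \cap B) + \beta_{0}(A \cap B).
\]

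The key geometric input is Lemma~\ref{lem:5.1}. Because \(\mathcal{S}_{j}(r) \cap \mathcal{S}_{n+1}(r) = \varnothing\) for \(j \leq n-1\), we get
\[
A \cap B = \bigcup_{j \leq n} \bigl(\mathcal{S}_{j}(r) \cap \mathcal{S}_{n+1}(r)\bigr) = \mathcal{S}_{n}(r) \cap \mathcal{S}_{n+1}(r).
\]
So the interface term is exactly the integrand of \(I_{n}\). Multiplying by \(|\psi(r)|\) and integrating over \([r_{0},r_{1}]\), as in Corollary~\ref{cor:5.3}, gives
\[
|\Phi_{\psi}(\tau_{n+1}) - \Phi_{\psi}(\tau_{n}) - Y_{n+1}| \leq I_{n}.
\]
Here \(\psi\) may change sign, but the bound \(|\int f\psi| \leq \int |f||\psi|\) suffices. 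The triangle inequality with the inductive hypothesis then yields \eqref{eq:5.5} at level \(n+1\).

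The main obstacle is the finiteness hypothesis in Lemma~\ref{lem:5.2}: the sets \(A\), \(B\), \(A\cap B\), \(A\cup B\) must have finite Betti numbers, at least for almost every \(r\) in the window, so that the integrals are meaningful. For unions of offsets this holds by the local finiteness in Proposition~\ref{prop:2.1}; those are themselves offsets of compact sets, and finitely many diagram points on a compact window means a finite Betti curve. The intersection of two offsets is not itself an offset, however, and I would handle it as follows. If \(\beta_{0}\) or \(\beta_{1}\) of \(\mathcal{S}_{n}(r)\cap\mathcal{S}_{n+1}(r)\) is infinite, the right-hand side is \(+\infty\) and the inequality is trivial at that radius. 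Otherwise, the exact-sequence argument only needs finiteness of the terms that appear, and \(\beta_{1}(A\cup B)\), \(\beta_{1}(A)\), \(\beta_{1}(B)\) are finite for \(r \geq r_{0} > 0\). With this convention (allowing \(I_{k} = +\infty\)), the estimate holds deterministically for every \(n\), and measurability in \(r\) follows because the Betti curves are step functions on the window.
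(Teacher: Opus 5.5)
Your proof is correct and is essentially the paper's argument. The paper also adds the blocks one at a time, uses Lemma~\ref{lem:5.1} to collapse the interface $\mathcal{U}_{n}(r)\cap\mathcal{S}_{n+1}(r)$ to $\mathcal{S}_{n}(r)\cap\mathcal{S}_{n+1}(r)$, and iterates Corollary~\ref{cor:5.3}; your extra care about the finiteness hypotheses of Lemma~\ref{lem:5.2} and the sign of $\psi$ goes beyond the paper's one-line proof, which takes finiteness of all the Betti numbers for granted.
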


\begin{proof}
By Lemma~\ref{lem:5.1}, only adjacent block sausages intersect.
Hence, when the blocks are added one by one, the only interface term
created at step \(k + 1\) is the overlap between \(\mathcal{S}_{k}(r)\)
and \(\mathcal{S}_{k + 1}(r)\). Iterating Corollary~\ref{cor:5.3} gives the claim.
\end{proof}

This is the deterministic part of the Boundary Lemma. The probabilistic
part is to show that \(I_{k}\) has finite mean.

\subsection{A finite-time complexity input}\label{a-finite-time-complexity-input}

We now prove the fixed-time estimate needed later in the boundary
analysis.

\begin{proposition}[finite-time complexity bound]
\label{prop:5.5}
Let \(K_{t} \coloneqq X\left( [ 0,t] \right).\) Fix
\(0 < r_{0} < r_{1} < \infty.\)

Then, there exists a constant
\(C = C\left( r_{0},r_{1},\mu \right) < \infty\) such that for
every \(t \geq 0\),

\begin{equation}\tag{5.6}\label{eq:5.6}
\mathbb{E}\left[ \int_{r_{0}}^{r_{1}}\beta_{1}(K_{t}^{(r)})\, dr \right] \leq C(1 + t).
\end{equation}

In fact, one has the deterministic pathwise bound

\[\int_{r_{0}}^{r_{1}}\beta_{1}\left( K_{t}^{(r)} \right)\, dr \leq \frac{\left| K_{t}^{\left( r_{1} \right)} \right|}{2\pi r_{0}},\]

where \(| \cdot |\) denotes planar Lebesgue measure.
\end{proposition}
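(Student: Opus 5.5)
The plan is to prove the deterministic pathwise bound first and then take expectations. The deterministic part rests on the coarea formula for the distance function \(d_{K_t}\). The expectation part rests on a linear-in-time bound for the mean area of the sausage \(K_t^{(r_1)}\).

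\emph{Step 1 (coarea).} The function \(d_{K_t}\) is \(1\)-Lipschitz and satisfies \(\lvert\nabla d_{K_t}\rvert = 1\) almost everywhere off \(K_t\). The planar range \(K_t\) has Lebesgue measure zero almost surely; in any case only \(\{d_{K_t}>0\}\) matters. By the coarea formula,
\[
\bigl|K_t^{(r_1)}\bigr| \;\ge\; \int_{0}^{r_1}\mathcal H^{1}\bigl(\{d_{K_t}=r\}\bigr)\,dr \;\ge\; \int_{r_0}^{r_1}\mathcal H^{1}\bigl(\{d_{K_t}=r\}\bigr)\,dr .
\]
For each \(r\in[r_0,r_1]\), the \(\beta_1(K_t^{(r)})\) holes are the bounded components \(O\) of \(\{d_{K_t}>r\}\) (Subsection~\ref{persistence-counting-measures-betti-curves-and-smoothed-functionals}). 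Distinct holes are disjoint open sets whose boundaries lie in \(\{d_{K_t}=r\}\) and overlap in at most finitely many points. The goal is therefore the per-hole estimate \(\mathcal H^1(\partial O)\ge 2\pi r\ge 2\pi r_0\). Summing this over holes and integrating in \(r\) gives \(2\pi r_0\int_{r_0}^{r_1}\beta_1(K_t^{(r)})\,dr\le |K_t^{(r_1)}|\), which is the stated bound.

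\emph{Step 2 (main obstacle: perimeter of a hole).} This is the step I expect to be hardest, and it is where I am least certain the argument closes. A hole at radius \(r\) is a bounded region lying outside every disk \(\overline B(X_s,r)\). Its boundary is a union of circular arcs of radius \(r\) that are concave as seen from \(O\), joined at corners. A naive Gauss--Bonnet count shows the problem. The total turning \(2\pi\) equals the sum of exterior corner angles minus \(\mathcal H^1(\partial O)/r\). Since corner angles can absorb all the turning, a tiny hole between three nearly tangent disks has small perimeter, so the pointwise bound can fail.

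I would first try to repair this by integrating along the lifetime of a hole rather than bounding each radius separately. A hole born at \(b\) stays (possibly splitting) a region of \(\{d_{K_t}>s\}\) until it dies at \(d\). One can charge to it the area it sweeps: the set of points of \(O_b\) with distance values in \((b,d)\) has area \(\int_b^d\mathcal H^1(\{d_{K_t}=s\}\cap O_b)\,ds\) by coarea. One would then bound \((d-b)\) from above by this area divided by \(2\pi r_0\), using that a hole of positive area whose maximal inscribed distance exceeds \(r_0\) has boundary length at least that of the inscribed circle. If only a weaker constant \(c(r_0,r_1)\) comes out, it still suffices for \eqref{eq:5.6}.

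\emph{Step 3 (expectation).} Cover \([0,t]\) by \(\lceil t\rceil\) unit intervals. The sausage \(K_t^{(r_1)}\) is contained in the union of the unit-time sausages, so \(|K_t^{(r_1)}|\le\sum_{j<\lceil t\rceil}|X([j,j+1])^{(r_1)}|\). By stationarity of increments, each summand has the law of \(|K_1^{(r_1)}|\). That area is at most \(\pi\bigl(r_1+\sup_{s\le 1}\lVert X_s-X_0\rVert\bigr)^2\), whose mean is finite by Gaussian tails of the supremum together with \(\lVert\mu\rVert<\infty\). Hence \(\mathbb E|K_t^{(r_1)}|\le C'(1+t)\). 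Combining this with the deterministic bound gives \eqref{eq:5.6} with \(C=C'/(2\pi r_0)\).
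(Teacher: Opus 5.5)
\textbf{There is a genuine gap, at your Step 2 (the step you flagged yourself).} Steps 1 and 3 are sound. Step 3 is also a simpler route to $\mathbb{E}\lvert K_t^{(r_1)}\rvert\le C(1+t)$ than the paper's hitting-time/occupation-measure argument. Cutting $[0,t]$ into unit intervals and using stationary increments with $\lvert X([j,j+1])^{(r_1)}\rvert\le\pi\bigl(r_1+\sup_{s\le 1}\lVert X_{j+s}-X_j\rVert\bigr)^2$ needs no strong Markov argument.

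Your suspicion in Step 2 is justified. The per-hole estimate $\mathcal{H}^1(\partial O)\ge 2\pi r$ is exactly what the paper's proof asserts (via ``curvature at most $1/r$'' and total turning $2\pi$), and it is false. It ignores the turning carried by corners. The curvature bound also fails for smooth hole boundaries: for the range of a circular loop of radius $R$ and $r\in(R/2,R)$, the hole is a disc of radius $R-r$, with curvature $1/(R-r)>1/r$ and perimeter $2\pi(R-r)<2\pi r$. Counting the outer boundary does not rescue \eqref{eq:5.7} either. A path tracing a triangular grid of $N$ cells of inradius $\rho$ slightly above $r$ has $N$ holes of perimeter $6\sqrt{3}(\rho-r)$ each, but only $O(\sqrt{N})$ of outer boundary. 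So you cannot fall back on the paper's argument here.

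Your lifetime-charging repair does not close the gap. At level $s\in(b,d)$ the hole has inradius $d-s$, not something at least $r_0$. The condition $d_{K_t}>r_0$ measures distance to $K_t$, not to $\partial O_s\subset\{d_{K_t}=s\}$. The inscribed-circle bound therefore gives only $\mathcal{H}^1(\partial O_s)\ge 2\pi(d-s)$, and integrating in $s$ gives swept area at least $\pi(d-b)^2$. This is quadratic in the lifetime, so it cannot dominate $c\,(d-b)$ for short bars, whatever the constant $c$. Two examples:
\begin{itemize}
\item For the circular loop with $r_0<R<3r_0$ (and $r_1\ge R$), the area swept inside the hole over the window is $\pi(R-r_0)^2<2\pi r_0(R-r_0)$.
\item For the grid with cell inradius $r_0+\eta$, each of the $N$ bars has window length $\eta$ but sweeps area only $O(\eta^2)$.
\end{itemize}
Splitting holes also make the charged regions overlap. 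As written, neither the pathwise bound nor \eqref{eq:5.6} is established.

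The missing idea is to pay for lifetimes with area in the walls of the sausage, not inside the holes. One way to do this is Gauss--Bonnet with corner terms, applied globally.
\begin{itemize}
\item Take a finite union $P^{(r)}$ of $r$-discs. Write $\Theta(r)=\mathcal{H}^1(\partial P^{(r)})/r$, and let $\theta_c\in(0,\pi)$ be the corner turning angles.
\item At generic $r$, $2\pi\chi(P^{(r)})=\Theta(r)-\sum_c\theta_c$. The two arcs meeting at a corner lose angle at total rate $2\tan(\theta_c/2)/r\ge\theta_c/r$.
\item Hence $\Theta$ is non-increasing and $\frac{d}{dr}\mathcal{H}^1(\partial P^{(r)})\le 2\pi\chi(P^{(r)})$.
\item If $P\subset K_t$ is $\varepsilon$-dense with $\varepsilon<r_0$, then $P^{(r)}$ is connected for $r\ge r_0$, so $\chi=1-\beta_1$ there.
\item Integrate over $[r_0,r_1]$ and use $\mathcal{H}^1(\partial P^{(r_0)})\le 2\lvert P^{(r_0)}\rvert/r_0$ (coarea plus monotonicity of $\Theta$). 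This gives $\int_{r_0}^{r_1}\beta_1(P^{(r)})\,dr\le (r_1-r_0)+\lvert K_t^{(r_0)}\rvert/(\pi r_0)$.
\item Pass to $K_t$ via $\rankop\bigl(H_1(K_t^{(r-\varepsilon)})\to H_1(K_t^{(r)})\bigr)\le\beta_1(P^{(r)})$, letting $\varepsilon\downarrow 0$.
\end{itemize}
Together with your Step 3, this yields \eqref{eq:5.6}, though not the pathwise bound in the exact form stated.
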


\begin{proof}
The first part of the proof establishes a deterministic
geometric bound. Fix a continuous path realization and write
\(K \coloneqq K_{t}\). Since \(K\) is connected, every offset \(K^{(r)}\) is
connected. Hence, in the plane, \(\beta_{1}\left( K^{(r)} \right)\) is
exactly the number of bounded connected components of
\(\mathbb{R}^{2}\setminus K^{(r)}\).

We claim that for almost every \(r > 0\),

\begin{equation}\tag{5.7}\label{eq:5.7}
\beta_{1}\left( K^{(r)} \right) \leq \frac{\mathcal{H}^{1}\left( \partial K^{(r)} \right)}{2\pi r},
\end{equation}

where \(\mathcal{H}^{1}\) is one-dimensional Hausdorff measure.

Indeed, let \(m = \beta_{1}\left( K^{(r)} \right)\), and let

\[H_{1}(r),\ldots,H_{m}(r)\]

be the bounded connected components of
\(\mathbb{R}^{2}\setminus K^{(r)}\). Their boundaries

\[\Gamma_{j}(r) \coloneqq \partial H_{j}(r),\quad\quad j = 1,\ldots,m,\]

are pairwise disjoint subsets of \(\partial K^{(r)}\). For regular
values of the distance function, each \(\Gamma_{j}(r)\) is a rectifiable
Jordan curve belonging to the boundary of an \(r\)-parallel set. A
standard geometric fact for planar parallel sets is that the curvature
of such a boundary component is bounded in absolute value by \(1/r\)
almost everywhere. Since a closed Jordan curve has total turning
\(2\pi\), we obtain

\[2\pi \leq \int_{\Gamma_{j}(r)}^{}|\kappa|\, ds \leq \frac{1}{r}\,\mathcal{H}^{1}\left( \Gamma_{j}(r) \right),\]

and therefore

\[\mathcal{H}^{1}\left( \Gamma_{j}(r) \right) \geq 2\pi r.\]

Summing over the \(m\) hole boundaries yields

\[\mathcal{H}^{1}\left( \partial K^{(r)} \right) \geq \sum_{j = 1}^{m}\mathcal{H}^{1}\left( \Gamma_{j}(r) \right) \geq 2\pi r\,\beta_{1}\left( K^{(r)} \right),\]

which is exactly \eqref{eq:5.7}.

Now use the coarea formula for the distance function
\(d_{K}(x) = dist(x,K)\). Since \(d_{K}\) is \(1\)-Lipschitz,

\begin{equation}\tag{5.8}\label{eq:5.8}
\left| K^{\left( r_{1} \right)} \right| - \left| K^{\left( r_{0} \right)} \right| = \int_{r_{0}}^{r_{1}}\mathcal{H}^{1}\left( \partial K^{(r)} \right)\, dr.
\end{equation}

Combining \eqref{eq:5.7} and \eqref{eq:5.8}, and using \(r \geq r_{0}\) on
\(\left[ r_{0},r_{1} \right]\), we get

\begin{equation}\tag{5.9}\label{eq:5.9}
\int_{r_{0}}^{r_{1}}\beta_{1}\left( K^{(r)} \right)\, dr \leq \frac{1}{2\pi r_{0}}\int_{r_{0}}^{r_{1}}\mathcal{H}^{1}\left( \partial K^{(r)} \right)\, dr = \frac{\left| K^{\left( r_{1} \right)} \right| - \left| K^{\left( r_{0} \right)} \right|}{2\pi r_{0}} \leq \frac{\left| K^{\left( r_{1} \right)} \right|}{2\pi r_{0}}.
\end{equation}

The second stage of the proof establishes a linear bound for the
expected sausage area.

Fix \(\rho > 0\), and define

\[W_{t}(\rho) \coloneqq K_{t}^{(\rho)} = \bigcup_{0 \leq s \leq t}\overline{B}\left( X_{s},\rho \right).\]

We show that

\begin{equation}\tag{5.10}\label{eq:5.10}
\mathbb{E}\left[ \lvert W_{t}(\rho) \rvert \right] \leq C_{\rho}(1 + t)
\end{equation}

for some \(C_{\rho} < \infty\).

For \(x \in \mathbb{R}^{2}\), let

\[\tau_{x} \coloneqq \inf\{ u \geq 0:\ X_{u} \in \overline{B}(x,\rho)\}.\]

Then

\[\lvert W_{t}(\rho) \rvert = \int_{\mathbb{R}^{2}}^{}\mathbf{1}_{\{\tau_{x} \leq t\}}\, dx,\]

Hence

\begin{equation}\tag{5.11}\label{eq:5.11}
\mathbb{E}\left[ \lvert W_{t}(\rho) \rvert \right] = \int_{\mathbb{R}^{2}}^{}\mathbb{P}\left( \tau_{x} \leq t \right)\, dx.
\end{equation}

Choose \(\delta > 0\), and define

\[m_{\rho,\delta} \coloneqq \inf_{\lvert y \rvert \leq \rho}\mathbb{E}_{y}\left[ \int_{0}^{\delta}\mathbf{1}_{\{\lvert X_{s} \rvert \leq 2\rho\}}\, ds \right].\]

We claim that \(m_{\rho,\delta} > 0\).

Indeed, if \(\lvert y \rvert \leq \rho\) and

\[\sup_{0 \leq s \leq \delta}\left| \mu s + B_{s} \right| \leq \rho,\]

then \(y + \mu s + B_{s} \in \overline{B}(0,2\rho)\) for all
\(0 \leq s \leq \delta\). Therefore

\[\mathbb{E}_{y}\left[ \int_{0}^{\delta}\mathbf{1}_{\{\lvert X_{s} \rvert \leq 2\rho\}}\, ds \right] \geq \delta\,\mathbb{P}\left( \sup_{0 \leq s \leq \delta}\left| \mu s + B_{s} \right| \leq \rho \right),\]

and the probability on the right is strictly positive. Hence

\begin{equation}\tag{5.12}\label{eq:5.12}
m_{\rho,\delta} > 0.
\end{equation}

Now fix \(x \in \mathbb{R}^{2}\). On the event \(\{\tau_{x} \leq t\}\),
we have \(X_{\tau_{x}} \in \overline{B}(x,\rho)\). By the strong Markov
property at time \(\tau_{x}\),

\[\mathbb{E}\left[ \int_{\tau_{x}}^{\tau_{x} + \delta}\mathbf{1}_{\{ X_{s} \in \overline{B}(x,2\rho)\}}\, ds\, \middle| \,\mathcal{F}_{\tau_{x}} \right] \geq m_{\rho,\delta}\quad\quad\text{on }\{\tau_{x} \leq t\}.\]

Hence

\[\mathbb{P}\left( \tau_{x} \leq t \right) \leq \frac{1}{m_{\rho,\delta}}\mathbb{E}\left[ \int_{0}^{t + \delta}\mathbf{1}_{\{ X_{s} \in \overline{B}(x,2\rho)\}}\, ds \right].\]

Integrating over \(x \in \mathbb{R}^{2}\) and using Fubini,

\[\mathbb{E}\left[ \lvert W_{t}(\rho) \rvert \right] \leq \frac{1}{m_{\rho,\delta}}\mathbb{E}\left[ \int_{0}^{t + \delta}\left( \int_{\mathbb{R}^{2}}^{}\mathbf{1}_{\{ X_{s} \in \overline{B}(x,2\rho)\}}\, dx \right)ds \right].\]

But for each fixed \(s\),

\[\int_{\mathbb{R}^{2}}^{}\mathbf{1}_{\{ X_{s} \in \overline{B}(x,2\rho)\}}\, dx = \left| \overline{B}(0,2\rho) \right| = 4\pi\rho^{2}.\]

Therefore

\[\mathbb{E}\left[ \lvert W_{t}(\rho) \rvert \right] \leq \frac{4\pi\rho^{2}}{m_{\rho,\delta}}(t + \delta),\]

which proves \eqref{eq:5.10}.

Applying \eqref{eq:5.9} with \(\rho = r_{1}\) yields the claim.
\end{proof}

\subsection{The Boundary Lemma}\label{the-boundary-lemma}

We now show that the overlap term \(I_{k}\) is integrable. This is where
the regeneration geometry and the fixed-time complexity input meet.

For the window width \(a\) fixed above, define the forward and backward
local times around the cut \(\tau_{k}\) by

\[\Theta_{k}^{+}(a) \coloneqq \inf\{ t \geq 0:\ U_{\tau_{k} + t} - U_{\tau_{k}} = a\},\]

and

\[\Theta_{k}^{-}(a) \coloneqq \tau_{k} - \sup\{ s \leq \tau_{k}:\ U_{s} = U_{\tau_{k}} - a\}.\]

These were introduced in Section~\ref{regeneration-structure-for-drifted-planar-brownian-motion}, where we proved that they have
finite mean and even small exponential moments.

Define the local pre-cut and post-cut path segments by

\[\mathcal{K}_{k}^{-}(a) \coloneqq X\left( \left[ \tau_{k} - \Theta_{k}^{-}(a),\,\tau_{k} \right] \right),\quad\quad\mathcal{K}_{k}^{+}(a) \coloneqq X\left( \left[ \tau_{k},\,\tau_{k} + \Theta_{k}^{+}(a) \right] \right).\]

\begin{lemma}[exact localization of the overlap]
\label{lem:5.6}
For \(r \in \left[ r_{0},r_{1} \right]\), set

\[A_{k}(r) \coloneqq (\mathcal{K}_{k}^{-}(a))^{(r)},\quad\quad B_{k}(r) \coloneqq (\mathcal{K}_{k}^{+}(a))^{(r)}.\]

Then for every \(k \geq 1\) and every
\(r \in \left[ r_{0},r_{1} \right]\),

\[\mathcal{S}_{k}(r) \cap \mathcal{S}_{k + 1}(r) = A_{k}(r) \cap B_{k}(r).\]
\end{lemma}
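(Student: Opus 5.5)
We prove the two inclusions separately. Throughout, fix \(k \geq 1\) and \(r \in [r_{0},r_{1}]\), and write \(\ell_{k} \coloneqq \tau_{k} - \Theta_{k}^{-}(a)\) and \(m_{k} \coloneqq \tau_{k} + \Theta_{k}^{+}(a)\). The basic facts used are:
\begin{itemize}[leftmargin=2em]
\item \(\tau_{k}\) is a first level-hitting time, so \(U_{s} < U_{\tau_{k}}\) for \(s < \tau_{k}\);
\item \(U_{\tau_{k+1}} - U_{\tau_{k}} \geq L > 2a\);
\item the good-cut property gives \(U_{u} \geq U_{\tau_{k}} - R\) for all \(u \geq \tau_{k}\);
\item the constants satisfy \(a > R + 2r_{1}\).
\end{itemize}

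\emph{The inclusion \(\supseteq\).} It suffices to show \(\mathcal{K}_{k}^{-}(a) \subset \mathcal{K}_{k}\) and \(\mathcal{K}_{k}^{+}(a) \subset \mathcal{K}_{k+1}\), since offsets are monotone in the set. In other words, we need \(\tau_{k-1} \leq \ell_{k}\) and \(m_{k} \leq \tau_{k+1}\). For the first, note that \(U_{\tau_{k-1}} \leq U_{\tau_{k}} - L < U_{\tau_{k}} - a\). By continuity, level \(U_{\tau_{k}} - a\) is therefore visited in \([\tau_{k-1},\tau_{k}]\), so its last visit \(\ell_{k}\) before \(\tau_{k}\) is at least \(\tau_{k-1}\). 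For the second, \(U_{\tau_{k+1}} \geq U_{\tau_{k}} + L > U_{\tau_{k}} + a\), so the first passage above \(U_{\tau_{k}} + a\) after \(\tau_{k}\) occurs before \(\tau_{k+1}\). Hence \(A_{k}(r) \subset \mathcal{S}_{k}(r)\) and \(B_{k}(r) \subset \mathcal{S}_{k+1}(r)\), and intersecting gives \(\supseteq\).

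\emph{The inclusion \(\subseteq\).} Let \(x \in \mathcal{S}_{k}(r) \cap \mathcal{S}_{k+1}(r)\). Pick \(s \in [\tau_{k-1},\tau_{k}]\) with \(\lVert x - X_{s} \rVert \leq r\), and \(u \in [\tau_{k},\tau_{k+1}]\) with \(\lVert x - X_{u} \rVert \leq r\). Projecting onto \(e\) as in Lemma~\ref{lem:5.1}, we get
\[ U_{\tau_{k}} - R - r \leq \langle x,e\rangle \leq U_{\tau_{k}} + r. \]
From this,
\[ U_{s} \geq \langle x,e\rangle - r \geq U_{\tau_{k}} - R - 2r_{1} > U_{\tau_{k}} - a, \qquad U_{u} \leq \langle x,e\rangle + r \leq U_{\tau_{k}} + 2r_{1} < U_{\tau_{k}} + a. \]
The goal is to replace \(s\) by a time in \([\ell_{k},\tau_{k}]\) and \(u\) by a time in \([\tau_{k},m_{k}]\) whose position is still within \(r\) of \(x\).
\begin{itemize}[leftmargin=2em]
\item If \(s \geq \ell_{k}\) and \(u \leq m_{k}\), then \(x \in A_{k}(r) \cap B_{k}(r)\) directly.
\item Otherwise, the plan is to argue by contradiction. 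Suppose \(s < \ell_{k}\) (the case \(u > m_{k}\) is symmetric). Then on \([\tau_{k-1},\ell_{k}]\) the path made an excursion reaching longitudinal height above \(U_{\tau_{k}} - a\). We would try to use the strict gap \(a - R - 2r_{1} > 0\), together with the fact that \(U\) stays strictly below \(U_{\tau_{k}}\) before \(\tau_{k}\) and at or above \(U_{\tau_{k}} - R\) after \(\tau_{k}\), to rule such an excursion out at the relevant transverse location.
\end{itemize}

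\emph{Main obstacle.} This last step is where I expect the difficulty, and possibly a genuine gap. Nothing in the definition of a good cut prevents the pre-cut path, before its last visit to \(U_{\tau_{k}} - a\), from having entered the band \((U_{\tau_{k}} - a, U_{\tau_{k}})\) and then retreated. Likewise, nothing prevents the post-cut path, after first reaching \(U_{\tau_{k}} + a\), from returning to heights in \([U_{\tau_{k}} - R, U_{\tau_{k}} + 2r_{1}]\); the good-cut property only bounds it below by \(U_{\tau_{k}} - R\). Either event can produce overlap points outside \(A_{k}(r) \cap B_{k}(r)\).

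I would first attempt the argument above. If it does not close, I would check whether the contributions of such stray excursions can be absorbed into \(A_{k}\) and \(B_{k}\) by a connectedness argument along the path. Failing that, the equality as stated would require strengthening either the cut event (e.g.\ no return below \(U_{\tau_{k}} + a\) after \(m_{k}\)) or the window definitions. In that case I would record the clean inclusion \(\supseteq\), together with \(\mathcal{S}_{k}(r) \cap \mathcal{S}_{k+1}(r) \subset \Sigma_{k}\) from Lemma~\ref{lem:5.1}, as what is actually established.
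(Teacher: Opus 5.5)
\textbf{The inclusion $\supseteq$.} Your argument is correct and is the paper's. You also make explicit the window containments $\mathcal{K}_k^-(a)\subset\mathcal{K}_k$ and $\mathcal{K}_k^+(a)\subset\mathcal{K}_{k+1}$, which the paper only justifies later, in the proof of Theorem~\ref{thm:5.8}.

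\textbf{The inclusion $\subseteq$.} You stop short here, but your diagnosis is right, and the gap lies in the statement rather than in your approach. The paper runs exactly the contradiction you sketch. It asserts that $y\notin\mathcal{K}_k^-(a)$ forces $\langle y,e\rangle\le U_{\tau_k}-a$ (``$y$ is visited before the last time at which the longitudinal coordinate equals $U_{\tau_k}-a$''). It likewise asserts that $z\notin\mathcal{K}_k^+(a)$ forces $\langle z,e\rangle\ge U_{\tau_k}+a$. Both implications confuse a last or first passage time with confinement on one side of the level. In fact the inclusion fails with positive probability, so no connectedness argument can repair it: the two sets genuinely differ.

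For a concrete failure, take $k=1$, $x_0=0$, $e=(1,0)$, $\varepsilon=\min(r_0,R)/4$ and $p=(L-R/2,D)$. Consider the event that:
\begin{itemize}
\item before $H_1$, the path comes within $\varepsilon$ of $p$, then returns below level $L-a$ near the axis $V=0$, then climbs to level $L$ inside the strip $\{|V|\le 1\}$;
\item after $H_1$, it comes within $\varepsilon$ of $p$ while staying inside $\{L-R<U<2L\}$ (hence before $\tau_2$), and thereafter never goes below $L-R$.
\end{itemize}
This event has positive probability, by the support theorem plus the strong Markov property at the genuine stopping time $H_1$. On it, $N_1=1$ and $\tau_1=H_1$. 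Moreover $\ell_1$ falls in the final climb, so $X([\ell_1,\tau_1])\subset\{|V|\le 1\}$. The pre-cut point $x$ near $p$ lies in $\mathcal{S}_1(r)$. It also lies in $\mathcal{S}_2(r)$, because a block-$2$ point is within $2\varepsilon\le r$ of it. But $x$ lies outside $A_1(r)$ as soon as $D>1+r_1+\varepsilon$. Symmetrically, a return of the post-cut path below $U_{\tau_k}+a$ after $m_k$, near a point of block $k$, gives the same failure for $B_k(r)$.

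\textbf{What can be salvaged.} Keeping only $\supseteq$, as in your fallback, would not rescue the downstream use. Theorem~\ref{thm:5.8} applies Lemma~\ref{lem:5.7} to $A_k(r)$ and $B_k(r)$, so it needs the identity itself, since Betti numbers are not monotone under inclusion.

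Your band estimates $U_s\ge U_{\tau_k}-R-2r_1$ and $U_u\le U_{\tau_k}+2r_1$ do prove the identity for windows defined by band occupation instead of passage at distance $a$. Let $\lambda_k^-$ be the first time in $[\tau_{k-1},\tau_k]$ at which $U\ge U_{\tau_k}-R-2r_1$. Let $\lambda_k^+$ be the last time in $[\tau_k,\tau_{k+1}]$ at which $U\le U_{\tau_k}+2r_1$. Then every witness satisfies $s\in[\lambda_k^-,\tau_k]$ and $u\in[\tau_k,\lambda_k^+]$, so $\mathcal{S}_k(r)\cap\mathcal{S}_{k+1}(r)=X([\lambda_k^-,\tau_k])^{(r)}\cap X([\tau_k,\lambda_k^+])^{(r)}$.

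These window lengths are bounded only by $\Delta\tau_k$ and $\Delta\tau_{k+1}$, not dominated by $\sigma_a^+$. The integrability in the Boundary Lemma must then come from the block-length moments of Proposition~\ref{prop:4.6}, not from Propositions~\ref{prop:4.7}--\ref{prop:4.8}. Alternatively, you can skip the localization entirely and apply Lemma~\ref{lem:5.7} directly to the connected sets $\mathcal{S}_k(r)$ and $\mathcal{S}_{k+1}(r)$, which meet at $X_{\tau_k}$.
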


\begin{proof}
The inclusion

\[A_{k}(r) \cap B_{k}(r) \subset \mathcal{S}_{k}(r) \cap \mathcal{S}_{k + 1}(r)\]

is immediate, since \(\mathcal{K}_{k}^{-}(a) \subset \mathcal{K}_{k}\)
and \(\mathcal{K}_{k}^{+}(a) \subset \mathcal{K}_{k + 1}\).

For the reverse inclusion, let

\[x \in \mathcal{S}_{k}(r) \cap \mathcal{S}_{k + 1}(r).\]

Choose \(y \in \mathcal{K}_{k}\) and \(z \in \mathcal{K}_{k + 1}\) such
that

\[\parallel x - y \parallel \leq r,\quad\quad \parallel x - z \parallel \leq r.\]

Suppose first that \(y \notin \mathcal{K}_{k}^{-}(a)\). Then \(y\) is
visited before the last time at which the longitudinal coordinate equals
\(U_{\tau_{k}} - a\), so

\[\langle y,e\rangle \leq U_{\tau_{k}} - a.\]

Hence

\[\langle x,e\rangle \leq \langle y,e\rangle + r \leq U_{\tau_{k}} - a + r.\]

Since \(a > R + 2r_{1}\) and \(r \leq r_{1}\),

\[- a + r < - R - r.\]

Therefore

\[\langle x,e\rangle < U_{\tau_{k}} - R - r.\]

But every point of \(\mathcal{S}_{k + 1}(r)\) has longitudinal
coordinate at least \(U_{\tau_{k}} - R - r\), a contradiction. Thus
\(y \in \mathcal{K}_{k}^{-}(a)\), so \(x \in A_{k}(r)\).

Similarly, if \(z \notin \mathcal{K}_{k}^{+}(a)\), then

\[\langle z,e\rangle \geq U_{\tau_{k}} + a,\]

and therefore

\[\langle x,e\rangle \geq \langle z,e\rangle - r \geq U_{\tau_{k}} + a - r > U_{\tau_{k}} + r.\]

But every point of \(\mathcal{S}_{k}(r)\) has longitudinal coordinate at
most \(U_{\tau_{k}} + r\), again a contradiction. Hence
\(z \in \mathcal{K}_{k}^{+}(a)\), so \(x \in B_{k}(r)\).

Therefore \(x \in A_{k}(r) \cap B_{k}(r)\), proving the reverse
inclusion.
\end{proof}

We also need a deterministic planar bound for intersections of connected
sets.

\begin{lemma}[planar intersection complexity for connected
sets]
\label{lem:5.7}
Let \(A,B \subset \mathbb{R}^{2}\) be compact connected
sets such that \(A \cup B\) has finite Betti numbers. Then:

(i) if \(A \cap B \neq \varnothing\),

\[\beta_{0}(A \cap B) \leq \beta_{1}(A \cup B) + 1;\]

(ii) if \(A \cap B\) also has finite Betti numbers,

\[\beta_{1}(A \cap B) \leq \beta_{1}(A) + \beta_{1}(B).\]

Consequently,

\begin{equation}\tag{5.13}\label{eq:5.13}
\beta_{1}(A \cap B) + \beta_{0}(A \cap B) \leq \beta_{1}(A) + \beta_{1}(B) + \beta_{1}(A \cup B) + 1.
\end{equation}
\end{lemma}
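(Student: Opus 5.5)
Both parts come from the Mayer--Vietoris exact sequence of Lemma~\ref{lem:5.2}, now read in reduced homology and combined with planar Alexander duality. Throughout, $A$ and $B$ are compact, connected subsets of $\mathbb{R}^{2}$.

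\emph{Part (i).} Take the tail of the reduced Mayer--Vietoris sequence
\[H_{1}(A)\oplus H_{1}(B)\to H_{1}(A\cup B)\xrightarrow{\partial}\tilde H_{0}(A\cap B)\to \tilde H_{0}(A)\oplus\tilde H_{0}(B).\]
Since $A$ and $B$ are connected, the last group is zero. Hence $\partial$ is surjective, and
\[\beta_{0}(A\cap B)-1=\dim\tilde H_{0}(A\cap B)\leq \beta_{1}(A\cup B).\]
This is (i); the hypothesis $A\cap B\neq\varnothing$ makes the reduced count meaningful.

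One caveat: for arbitrary compacta, singular $H_{0}$ counts path components, and these can differ from connected components. For the sets actually used here (offsets of path segments at positive radius, which are locally path-connected), this causes no trouble. In general I would use Čech homology, for which Mayer--Vietoris also holds for compact pairs.

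\emph{Part (ii).} This is the genuinely planar step, and I expect it to be the main obstacle. The plan is to use Alexander duality, which identifies $\beta_{1}(C)$ with the number of bounded components of $\mathbb{R}^{2}\setminus C$ for compact $C$ (Čech homology), and then compare components of complements. Write
\[\mathbb{R}^{2}\setminus(A\cap B)=(\mathbb{R}^{2}\setminus A)\cup(\mathbb{R}^{2}\setminus B).\]
Each bounded component $W$ of this set is a connected open set. I distinguish two cases.

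\begin{itemize}
\item If $W$ meets a bounded component $H$ of $\mathbb{R}^{2}\setminus A$ or of $\mathbb{R}^{2}\setminus B$, then $H\subset W$, since $H$ is connected and contained in the union. Assign $W$ to one such $H$. Distinct $W$ are disjoint, so this assignment is injective.
\item Otherwise $W$ is contained in the union of the unbounded components $O_{A}$ and $O_{B}$. Each of these is connected, and their union must meet $W$. If $W$ meets $O_{A}$ (say), then $O_{A}\subset W$, so $W$ is unbounded, a contradiction. So this case cannot occur.
\end{itemize}

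The number of bounded components is therefore at most $\beta_{1}(A)+\beta_{1}(B)$, which is (ii). In this argument the connectedness of $A$ and $B$ is not even needed. What is needed is the duality identification, and this is where finiteness of the Betti numbers and the choice of homology theory enter. For the offsets used later, singular and Čech homology agree, because these sets are ENRs at regular radii; I would note this restriction explicitly rather than prove it in full generality.

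\emph{Consequence.} Adding (i) and (ii) gives \eqref{eq:5.13} immediately. If $A\cap B=\varnothing$, both intersection terms vanish and \eqref{eq:5.13} holds trivially.
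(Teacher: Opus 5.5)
Your proof is correct. Part (i) is the paper's argument: the paper uses the unreduced tail $H_1(A\cup B)\to H_0(A\cap B)\to H_0(A)\oplus H_0(B)\to H_0(A\cup B)\to 0$ and notes that the summation map has one-dimensional kernel, which is your reduced-homology statement.

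Part (ii) takes a genuinely different route. The paper stays inside Mayer--Vietoris algebra. It writes $\chi(A\cap B)=\chi(A)+\chi(B)-\chi(A\cup B)$ with $\chi=\beta_0-\beta_1$ for planar compacta, and uses connectedness of $A$, $B$, $A\cup B$ to solve for $\beta_1(A\cap B)=\beta_0(A\cap B)-1+\beta_1(A)+\beta_1(B)-\beta_1(A\cup B)$. It then substitutes (i). This is equivalent to saying that $H_2(A\cup B)=0$ makes $H_1(A\cap B)\to H_1(A)\oplus H_1(B)$ injective. It uses the finiteness hypothesis on $A\cap B$, the connectedness of the pieces, and the vanishing of planar $H_2$.

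Your argument uses Alexander duality plus an injective assignment of each bounded component of $\mathbb{R}^2\setminus(A\cap B)$ to a bounded component of $\mathbb{R}^2\setminus A$ or of $\mathbb{R}^2\setminus B$. It is more geometric and decouples (ii) from (i). It is also more general: it needs neither connectedness nor finiteness, since the inequality holds as cardinals, so your remark that finiteness "enters" here is unnecessary. It also matches the paper's own reading of $\beta_1$ as a hole count in Subsection~\ref{persistence-counting-measures-betti-curves-and-smoothed-functionals} and Proposition~\ref{prop:5.5}.

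The cost is that your argument is intrinsically a \v{C}ech statement. Your ENR remark covers the offsets $A$ and $B$, but $A\cap B$ is not an offset. To read (ii) with singular Betti numbers of the intersection, you would still need a tameness argument for $A\cap B$. Alternatively, declare that all Betti numbers in the lemma are \v{C}ech, in which case your argument needs no further hypotheses.
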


\begin{proof}
Since \(A\) and \(B\) are connected and
\(A \cap B \neq \varnothing\), the union \(A \cup B\) is connected.

For the first claim, consider the degree-zero tail of the
Mayer--Vietoris exact sequence:

\[H_{1}(A \cup B)\overset{\partial}{\rightarrow}H_{0}(A \cap B) \rightarrow H_{0}(A) \oplus H_{0}(B) \rightarrow H_{0}(A \cup B) \rightarrow 0.\]

Because \(A\), \(B\), and \(A \cup B\) are connected,

\[H_{0}(A) \cong H_{0}(B) \cong H_{0}(A \cup B)\cong \mathbb{k}.\]

The map

\[H_{0}(A) \oplus H_{0}(B) \rightarrow H_{0}(A \cup B)\]

is therefore the summation map \((u,v) \mapsto u + v\), whose kernel has
dimension \(1\). By exactness,

\[\beta_{0}(A \cap B) - 1 = \dim \im\partial \leq \beta_{1}(A \cup B),\]

which proves the first claim.

For the second claim, use Euler characteristic. In the plane,

\[\chi(Y) = \beta_{0}(Y) - \beta_{1}(Y)\]

for every compact set \(Y\) with finite Betti numbers. Since Euler
characteristic is additive under unions,

\[\chi(A \cap B) = \chi(A) + \chi(B) - \chi(A \cup B).\]

Because \(A\), \(B\), and \(A \cup B\) are connected, this becomes

\[\beta_{0}(A \cap B) - \beta_{1}(A \cap B) = \left( 1 - \beta_{1}(A) \right) + \left( 1 - \beta_{1}(B) \right) - \left( 1 - \beta_{1}(A \cup B) \right).\]

Rearranging,

\[\beta_{1}(A \cap B) = \beta_{0}(A \cap B) - 1 + \beta_{1}(A) + \beta_{1}(B) - \beta_{1}(A \cup B).\]

Using the first part,

\[\beta_{0}(A \cap B) - 1 \leq \beta_{1}(A \cup B),\]

hence

\[\beta_{1}(A \cap B) \leq \beta_{1}(A) + \beta_{1}(B).\]

Adding the two bounds gives \eqref{eq:5.13}.
\end{proof}

We can now prove the probabilistic Boundary Lemma.

\begin{theorem}[Boundary Lemma]
\label{thm:5.8}
Let \(I_{k}\) be the interface correction from Proposition
5.4:

\[I_{k} \coloneqq \int_{r_{0}}^{r_{1}}\left[ \beta_{1}\left( \mathcal{S}_{k}(r) \cap \mathcal{S}_{k + 1}(r) \right) + \beta_{0}\left( \mathcal{S}_{k}(r) \cap \mathcal{S}_{k + 1}(r) \right) \right] \lvert \psi(r) \rvert \, dr.\]

Then the sequence \(\left( I_{k} \right)_{k \geq 1}\) is
stationary and \(1\)-dependent. Moreover,

\[\mathbb{E}\left[ I_{1} \right] < \infty.\]
\end{theorem}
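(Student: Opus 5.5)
Both parts of the statement rest on Lemma~\ref{lem:5.6}, which writes the overlap as $A_k(r)\cap B_k(r)$. Here $A_k(r)$ and $B_k(r)$ are the $r$-sausages of the pre-cut segment $\mathcal{K}_k^-(a)$ and the post-cut segment $\mathcal{K}_k^+(a)$.

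\emph{Stationarity and 1-dependence.} I would show that $I_k$ is a fixed measurable functional $F(\widetilde{X}^{(k)},\widetilde{X}^{(k+1)})$ of two consecutive recentered blocks.
\begin{itemize}
\item The overlap $\mathcal{S}_k(r)\cap\mathcal{S}_{k+1}(r)$ is determined by $\mathcal{K}_k$ and $\mathcal{K}_{k+1}$.
\item Translating both ranges by $-X_{\tau_{k-1}}$ changes neither $\beta_0$ nor $\beta_1$ of the intersection.
\item After this translation, $\mathcal{K}_k$ and $\mathcal{K}_{k+1}$ are the ranges of $\widetilde{X}^{(k)}$ and of $\widetilde{X}^{(k)}(\Delta\tau_k)+\widetilde{X}^{(k+1)}(\cdot)$.
\end{itemize}
By Theorem~\ref{thm:4.5} the blocks are i.i.d. Hence $(I_k)$ is stationary, being a sliding window of width two over an i.i.d. sequence. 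It is also $1$-dependent, since $\{I_j: j\le k\}$ and $\{I_j: j\ge k+2\}$ depend on the disjoint index sets $\{1,\dots,k+1\}$ and $\{k+2,\dots\}$.

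\emph{Integrability.} Since $\psi$ is bounded, it suffices to bound
\[
\mathbb{E}\int_{r_0}^{r_1}\bigl[\beta_1+\beta_0\bigr](A_1(r)\cap B_1(r))\,dr .
\]
The segments $\mathcal{K}_1^\pm(a)$ are connected and meet at $X_{\tau_1}$. So $A_1(r)$ and $B_1(r)$ are compact and connected with nonempty intersection, and \eqref{eq:5.13} of Lemma~\ref{lem:5.7} gives
\[
[\beta_1+\beta_0](A_1\cap B_1)\le \beta_1(A_1(r))+\beta_1(B_1(r))+\beta_1\bigl((\mathcal{K}_1^-\cup\mathcal{K}_1^+)^{(r)}\bigr)+1 .
\]
The finiteness hypotheses of Lemma~\ref{lem:5.7} need to hold for a.e.\ $r$. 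I would justify this using the $q$-tameness in Proposition~\ref{prop:2.1}, together with the bound \eqref{eq:5.7}, which gives finite $\beta_1$ whenever the boundary length is finite.

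Each $\beta_1$ term is then controlled by the pathwise area bound of Proposition~\ref{prop:5.5}: its integral over $[r_0,r_1]$ is at most $|\cdot^{(r_1)}|/(2\pi r_0)$. That proposition is stated for initial segments $K_t$, but the same deterministic argument applies to the range of any continuous path segment. Each area is at most the area of the $r_1$-sausage of the path over the time window $[\tau_1-\Theta_1^-(a),\,\tau_1+\Theta_1^+(a)]$.

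\emph{Main obstacle.} The hard step is bounding the expected sausage area over this random window. The window length has exponential moments by Propositions~\ref{prop:4.7} and~\ref{prop:4.8}, but its endpoints are random times that are not stopping times, so the linear bound \eqref{eq:5.10} cannot be applied directly. I would try the following route.
\begin{itemize}
\item Dominate the area by $\pi r_1^2+2r_1\,\mathrm{length}$ of the path's range, or more robustly by $(2r_1+\mathrm{diam})^2$ of the segment.
\item Bound the diameter through the longitudinal and transverse extents. The longitudinal extent is at most $2a+R$, by the good-cut property at $\tau_1$ together with the definitions of the windows.
\item The transverse extent is $\sup|V_s-V_{s'}|$ over a window of length $\Theta_1^-+\Theta_1^+$. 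Since $V$ is independent of $U$, and the window is determined by $U$ alone, I would condition on $U$. The transverse oscillation then has conditional second moment $O(\Theta_1^-+\Theta_1^+)$, with one caveat: the window straddles $\tau_1$, which is itself determined by $U$, so the conditioning must be on the full $U$-path. That is permissible because $V$ is independent of the whole of $U$.
\end{itemize}
Taking expectations and using the finite means of $\Theta_1^\pm(a)$ gives $\mathbb{E}[I_1]<\infty$.
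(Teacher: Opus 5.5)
Your proof is correct. The stationarity part is essentially the paper's; the integrability part takes a genuinely different route, and that route avoids a step the paper does not justify.

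\emph{Stationarity and $1$-dependence.} The paper gets $I_k\in\sigma(\widetilde X^{(k)},\widetilde X^{(k+1)})$ by noting that, since $a<L$, $\Theta_k^-(a)$ is a functional of block $k$ and $\Theta_k^+(a)$ of block $k+1$. You read this off directly from $\mathcal S_k(r)\cap\mathcal S_{k+1}(r)$ and make the translation-invariance step explicit. Your past/future formulation of $1$-dependence is also the right one; the paper only records pairwise independence of $I_k,I_j$ for $|j-k|\ge 2$.

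\emph{Integrability.} The paper conditions on the window lengths and invokes the strong Markov property. It uses this to transfer $\mathbb E\int_{r_0}^{r_1}\beta_1(K_t^{(r)})\,dr\le C(1+t)$ to the pre- and post-cut segments. As you point out, that is not licensed as written. $\tau_k$ depends on the future through the good-cut event, and $\tau_k-\Theta_k^-(a)$ is a last-exit time, so neither is a stopping time. Moreover, conditioning on the length of such a window changes the law of the path on it.

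Your argument sidesteps this in three steps:
\begin{itemize}
\item The pathwise half of Proposition~\ref{prop:5.5} reduces everything to the area of the $r_1$-sausage of $X([\tau_1-\Theta_1^-(a),\tau_1+\Theta_1^+(a)])$.
\item On that window $U$ stays in $[U_{\tau_1}-a,\,U_{\tau_1}+a]$. Indeed, $U$ is below $U_{\tau_1}$ before the first hitting time $\tau_1$, above $U_{\tau_1}-a$ after the last exit, and above $U_{\tau_1}-R>U_{\tau_1}-a$ after the cut.
\item The transverse oscillation is handled by Doob's inequality conditionally on the whole $U$-path. This is legitimate because the entire regeneration construction is $\sigma(U)$-measurable and $V$ is independent of $U$.
\end{itemize}
The paper's approach, if it could be repaired, would be more portable: it uses only a linear expected-area bound, not the product structure of drifted Brownian motion. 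Yours exploits that product structure and actually closes the argument.

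Three small points.
\begin{itemize}
\item Drop the bound $\pi r_1^2+2r_1\cdot\mathrm{length}$, which is vacuous for Brownian paths.
\item If you use a rectangle instead of $(2r_1+\mathrm{diam})^2$, the area is linear in the transverse side. Then $\mathbb E\sqrt{\Theta_1^-(a)+\Theta_1^+(a)}<\infty$ already suffices.
\item Your proposed justification of the finiteness hypotheses in Lemma~\ref{lem:5.7} does not work as stated. $q$-tameness does not give finite Betti numbers at a single scale, and neither it nor (5.7) applies to $A_1(r)\cap B_1(r)$, which is not an offset. The paper does not address these hypotheses at all, so this is not a point where you fall short of it.
\end{itemize}
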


\begin{proof}
Fix \(k \geq 1\), and for
\(r \in \left[ r_{0},r_{1} \right]\) define

\[A_{k}(r) \coloneqq (\mathcal{K}_{k}^{-}(a))^{(r)},\quad\quad B_{k}(r) \coloneqq (\mathcal{K}_{k}^{+}(a))^{(r)},\]

as in Lemma~\ref{lem:5.6}. Also set

\[W_{k}(r) \coloneqq A_{k}(r) \cup B_{k}(r) = \left( X\left( \left[ \tau_{k} - \Theta_{k}^{-}(a),\tau_{k} + \Theta_{k}^{+}(a) \right] \right) \right)^{(r)}.\]

Because both local path segments contain the cut point \(X_{\tau_{k}}\),
the sets \(A_{k}(r)\), \(B_{k}(r)\), and \(W_{k}(r)\) are compact and
connected. By Lemma~\ref{lem:5.6},

\[\mathcal{S}_{k}(r) \cap \mathcal{S}_{k + 1}(r) = A_{k}(r) \cap B_{k}(r).\]

Applying Lemma~\ref{lem:5.7} with \(A = A_{k}(r)\) and \(B = B_{k}(r)\), we obtain

\begin{equation}\tag{5.14}\label{eq:5.14}
\beta_{1}\left( \mathcal{S}_{k}(r) \cap \mathcal{S}_{k + 1}(r) \right) + \beta_{0}\left( \mathcal{S}_{k}(r) \cap \mathcal{S}_{k + 1}(r) \right) \leq \beta_{1}\left( A_{k}(r) \right) + \beta_{1}\left( B_{k}(r) \right) + \beta_{1}\left( W_{k}(r) \right) + 1.
\end{equation}

Integrating over \(r \in \left[ r_{0},r_{1} \right]\), we
get

\begin{equation}\tag{5.15}\label{eq:5.15}
I_{k} \leq \parallel \psi \parallel_{\infty}\int_{r_{0}}^{r_{1}}\left[ \beta_{1}\left( A_{k}(r) \right) + \beta_{1}\left( B_{k}(r) \right) + \beta_{1}\left( W_{k}(r) \right) + 1 \right] dr.
\end{equation}

We now prove the three claims.

First, \(I_{k}\) is measurable with respect to the sigma-field generated
by the pair of consecutive regeneration blocks
\(\left( {\widetilde{X}}^{(k)},{\widetilde{X}}^{(k + 1)} \right)\).
Indeed, since \(a < L\), the level \(U_{\tau_{k}} - a\) lies strictly
above \(U_{\tau_{k - 1}}\), so the last visit to that level before
\(\tau_{k}\) occurs inside block \(k\); hence \(\Theta_{k}^{-}(a)\) is a
measurable functional of block \(k\) alone. Similarly, the first time
after \(\tau_{k}\) at which the process advances by \(a\) occurs before
\(\tau_{k + 1}\), so \(\Theta_{k}^{+}(a)\) is a measurable functional of
block \(k + 1\) alone. Therefore \(A_{k}(r)\), \(B_{k}(r)\),
\(W_{k}(r)\), and hence \(I_{k}\), are measurable with respect to
\(\sigma\left( {\widetilde{X}}^{(k)},{\widetilde{X}}^{(k + 1)} \right)\).
Since the regeneration blocks are i.i.d. by Theorem~\ref{thm:4.5},
\(\left( I_{k} \right)\) is stationary.

Second, because \(I_{k}\) is measurable with respect to
\(\sigma\left( {\widetilde{X}}^{(k)},{\widetilde{X}}^{(k + 1)} \right)\),
and the block sequence \(\left( {\widetilde{X}}^{(j)} \right)\) is
independent across disjoint index sets, it follows that \(I_{k}\) is
independent of \(I_{j}\) whenever \(|j - k| \geq 2\). Hence
\(\left( I_{k} \right)\) is \(1\)-dependent.

Third, we prove integrability. By Proposition~\ref{prop:5.5}, there exists
\(C = C\left( r_{0},r_{1},\mu \right)\) such that for every
\(t \geq 0\),

\[\mathbb{E}\left[ \int_{r_{0}}^{r_{1}}\beta_{1}(X\left( [ 0,t] \right)^{(r)})\, dr \right] \leq C(1 + t).\]

Conditioning on the window lengths and using the strong Markov property,
we obtain

\[\mathbb{E}\left[ \int_{r_{0}}^{r_{1}}\beta_{1}\left( A_{k}(r) \right)\, dr\, \middle| \,\Theta_{k}^{-}(a) \right] \leq C\left( 1 + \Theta_{k}^{-}(a) \right),\]

\[\mathbb{E}\left[ \int_{r_{0}}^{r_{1}}\beta_{1}\left( B_{k}(r) \right)\, dr\, \middle| \,\Theta_{k}^{+}(a) \right] \leq C\left( 1 + \Theta_{k}^{+}(a) \right),\]

and

\[\mathbb{E}\left[ \int_{r_{0}}^{r_{1}}\beta_{1}\left( W_{k}(r) \right)\, dr\, \middle| \,\Theta_{k}^{-}(a),\Theta_{k}^{+}(a) \right] \leq C\left( 1 + \Theta_{k}^{-}(a) + \Theta_{k}^{+}(a) \right).\]

Taking expectations in \eqref{eq:5.15}, we conclude that

\[\mathbb{E}\left[ I_{k} \right] \leq C'\left( 1 + \mathbb{E}\left[ \Theta_{k}^{-}(a) \right] + \mathbb{E}\left[ \Theta_{k}^{+}(a) \right] \right) < \infty,\]

because Propositions 4.7 and 4.8 imply that both window lengths have
finite mean. This proves the theorem.
\end{proof}

\subsection{Regeneration-time increments and the LLN}\label{regeneration-time-increments-and-the-lln}

Define the increments of the smoothed persistence functional along
regeneration times by

\[Z_{1} \coloneqq \Phi_{\psi}\left( \tau_{1} \right),\quad\quad Z_{k} \coloneqq \Phi_{\psi}\left( \tau_{k} \right) - \Phi_{\psi}\left( \tau_{k - 1} \right),\quad\quad k \geq 2.\]

For \(k \geq 1\), let

\[\mathcal{G}_{k} \coloneqq \sigma\left( {\widetilde{X}}^{(k)} \right)\]

be the sigma-field generated by the \(k\)-th regeneration block.

The next lemma makes the locality of the increment \(Z_{k}\) completely
explicit.

\begin{lemma}[locality of the regeneration-time increments]
\label{lem:5.9}
For every \(k \geq 2\),

\[Z_{k} \in \mathcal{G}_{k - 1} \vee \mathcal{G}_{k}.\]

Equivalently, there exists a measurable map \(F\) such
that

\[Z_{k} = F\left( {\widetilde{X}}^{(k - 1)},{\widetilde{X}}^{(k)} \right),\quad\quad k \geq 2.\]

Consequently, the sequence \(\left( Z_{k} \right)_{k \geq 2}\)
is stationary and \(1\)-dependent.
\end{lemma}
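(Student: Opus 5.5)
The plan is to prove the locality statement for each fixed radius \(r\in[r_0,r_1]\). Write \(A\coloneqq\mathcal{U}_{k-1}(r)=K_{\tau_{k-1}}^{(r)}\) and \(B\coloneqq\mathcal{S}_k(r)\). I will show that
\[
\beta_1(A\cup B)-\beta_1(A)=:\delta_k(r)
\]
is a measurable functional of the pair of blocks \(\mathcal{K}_{k-1},\mathcal{K}_k\) alone. Since \(Z_k=\int_{r_0}^{r_1}\delta_k(r)\psi(r)\,dr\) by \eqref{eq:5.1}, this gives the claim, provided the map \(r\mapsto\delta_k(r)\) is jointly measurable in \(r\) and the path.

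\emph{Step 1: exact Mayer--Vietoris bookkeeping.} From the proof of Lemma~\ref{lem:5.2},
\[
\delta_k(r)=\beta_1(B)-\dim\im\alpha+\dim\im\gamma,
\qquad
\alpha\colon H_1(A\cap B)\to H_1(A)\oplus H_1(B).
\]
By Lemma~\ref{lem:5.1}, the intersection satisfies \(A\cap B=\mathcal{S}_{k-1}(r)\cap\mathcal{S}_k(r)\), so it depends only on blocks \(k-1\) and \(k\).

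The term \(\beta_1(B)\) is obviously local. For \(\im\gamma\), exactness gives \(\im\gamma=\ker\bigl(H_0(A\cap B)\to H_0(A)\oplus H_0(B)\bigr)\). Since \(A\) and \(B\) are connected, every component of \(A\cap B\) maps to the generator in each of \(H_0(A)\) and \(H_0(B)\). Hence \(\dim\im\gamma=\beta_0(A\cap B)-1\), or \(0\) if the intersection is empty. This is again local.

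\emph{Step 2: the rank of \(\alpha\) (the main obstacle).} Only the component \(H_1(A\cap B)\to H_1(A)\) involves the whole past. I plan to show that, on the image of \(H_1(A\cap B)\), the map \(H_1(\mathcal{S}_{k-1}(r))\to H_1(A)\) is injective. Then \(\rankop\alpha\) equals the rank of \(H_1(A\cap B)\to H_1(\mathcal{S}_{k-1}(r))\oplus H_1(B)\), which is local.

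The tool is planar Alexander duality. For a compact planar set \(Y\) with finitely generated homology, a class \(z\in H_1(Y)\) vanishes if and only if its winding number around every point of \(\mathbb{R}^2\setminus Y\) is zero. Concretely, \(H_1(Y)\cong\check H^1(Y)^\ast\) is detected by the bounded components of \(\mathbb{R}^2\setminus Y\); I will argue with representing cycles chosen in a slightly enlarged neighbourhood, or pass to Čech homology, to avoid wild-set issues.

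Any cycle in \(A\cap B\) lies in the half-plane \(P\coloneqq\{\langle x,e\rangle\ge U_{\tau_{k-1}}-R-r\}\), because \(B\) lies there by the good-cut property. Points with nonzero winding number lie in the convex hull of the cycle, hence also in \(P\). Inside \(P\) we have \(A\cap P=\mathcal{S}_{k-1}(r)\cap P\), since \(\mathcal{S}_j(r)\subset\{\langle x,e\rangle\le U_{\tau_{k-2}}+r\}\) for \(j\le k-2\), and \(L>R+2r_1\) separates this from \(P\). So the complements of \(A\) and of \(\mathcal{S}_{k-1}(r)\) agree on \(P\). Consequently a class coming from \(A\cap B\) dies in \(H_1(A)\) exactly when it dies in \(H_1(\mathcal{S}_{k-1}(r))\).

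\emph{Step 3: measurability and translation invariance.} All quantities above are invariant under translating the plane. Recentring at \(X_{\tau_{k-2}}\), the sets \(\mathcal{S}_{k-1}(r)\), \(\mathcal{S}_k(r)\) and their intersection are determined by \(\widetilde X^{(k-1)}\) and \(\widetilde X^{(k)}\), the latter shifted by the endpoint of the former. This yields a single measurable \(F\) with \(Z_k=F(\widetilde X^{(k-1)},\widetilde X^{(k)})\) for all \(k\ge2\). Joint measurability in \(r\) follows because Betti numbers of offsets are semicontinuous off the finitely many critical radii in \([r_0,r_1]\) (Proposition~\ref{prop:2.1}).

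Stationarity and \(1\)-dependence then follow from Theorem~\ref{thm:4.5}. The pairs \((\widetilde X^{(k-1)},\widetilde X^{(k)})\) have the same law for every \(k\), and for \(|j-k|\ge2\) the variables \(Z_j\) and \(Z_k\) are functions of disjoint sets of independent blocks. I expect Step 2 to be the main difficulty, specifically the rigorous use of Alexander duality for these possibly non-tame offsets and the verification that the convex-hull argument confines the relevant holes to \(P\).
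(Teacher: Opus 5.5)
Your argument is correct, modulo the \v{C}ech-versus-singular technicalities you yourself flag. These are of the same kind the paper passes over (exactness of Mayer--Vietoris for closed offsets, finiteness of Betti numbers). It follows a genuinely different route from the paper.

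The paper never looks at the Mayer--Vietoris maps. With \(A=\mathcal{U}_{k-2}(r)\), \(B=\mathcal{S}_{k-1}(r)\), \(C=\mathcal{S}_{k}(r)\), it uses only \(A\cap C=\varnothing\), hence \(A\cap(B\cup C)=A\cap B\). Inclusion--exclusion for \(\chi\) then gives \(\chi(\mathcal{U}_k(r))-\chi(\mathcal{U}_{k-1}(r))=\chi(B\cup C)-\chi(B)\). The non-local terms \(\chi(A)\) and \(\chi(A\cap B)\) cancel, and \(\beta_1=1-\chi\) for connected planar sets yields \eqref{eq:5.17} at once.

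You instead keep the two-set sequence for \((\mathcal{U}_{k-1}(r),\mathcal{S}_k(r))\) and prove \(\rankop\alpha=\rankop\alpha'\), where \(\alpha'\) maps \(H_1(\mathcal{S}_{k-1}(r)\cap\mathcal{S}_k(r))\) into \(H_1(\mathcal{S}_{k-1}(r))\oplus H_1(\mathcal{S}_k(r))\). You get this through a stronger claim: the kernels into \(H_1(\mathcal{U}_{k-1}(r))\) and into \(H_1(\mathcal{S}_{k-1}(r))\) coincide. That costs planar Alexander duality and a stronger geometric input, namely separation of \(\mathcal{U}_{k-2}(r)\) from \(\mathcal{S}_k(r)\) by the line \(\partial P\) rather than mere disjointness. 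This input is genuinely needed for your intermediate claim. Take \(\mathcal{S}_{k-1}=\{1\le|x|\le 3\}\), \(\mathcal{S}_k=\{2\le|x|\le 4\}\) and an old piece \(\{|x|\le 1\}\), which is disjoint from \(\mathcal{S}_k\) but not separated from it by a line. There the two kernels differ, yet \(\rankop\alpha=\rankop\alpha'\) and the increment identity still hold, exactly as the \(\chi\)-argument predicts. What your route buys is a finer, class-level locality statement that the dimension count does not give.

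Your local expression is in fact the paper's. Mayer--Vietoris for the pair \((\mathcal{S}_{k-1}(r),\mathcal{S}_k(r))\) gives \(\beta_1(\mathcal{S}_k(r))-\rankop\alpha'+\beta_0(\mathcal{S}_{k-1}(r)\cap\mathcal{S}_k(r))-1=\beta_1(\mathcal{S}_{k-1}(r)\cup\mathcal{S}_k(r))-\beta_1(\mathcal{S}_{k-1}(r))\). This is the form \eqref{eq:5.17} used later in Proposition~\ref{prop:5.10}, so it is worth recording.

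Finally, Proposition~\ref{prop:2.1} only asserts finitely many diagram points with birth and death inside \([r_0,r_1]\). It does not give ``finitely many critical radii'', so your measurability justification is unsupported as cited. The paper, for its part, does not address measurability in \(r\) at all.
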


\begin{proof}
Fix \(k \geq 2\). For
\(r \in \left[ r_{0},r_{1} \right]\), write

\[\mathcal{U}_{m}(r) \coloneqq \underset{j = 1}{\bigcup^{m}}\mathcal{S}_{j}(r) = K_{\tau_{m}}^{(r)}.\]

Then

\begin{equation}\tag{5.16}\label{eq:5.16}
Z_{k} = \int_{r_{0}}^{r_{1}}\left[ \beta_{1}\left( \mathcal{U}_{k}(r) \right) - \beta_{1}\left( \mathcal{U}_{k - 1}(r) \right) \right]\psi(r)dr.
\end{equation}

Fix \(r \in \left[ r_{0},r_{1} \right]\), and set

\[A \coloneqq \mathcal{U}_{k - 2}(r),\quad\quad B \coloneqq \mathcal{S}_{k - 1}(r),\quad\quad C \coloneqq \mathcal{S}_{k}(r).\]

By Lemma~\ref{lem:5.1}, \(A \cap C = \varnothing\). Also \(A\), \(B\), \(C\),
\(A \cup B\), and \(A \cup B \cup C\) are compact connected planar sets
with finite Betti numbers. Since Euler characteristic is additive under
unions,

\[\chi(A \cup B) = \chi(A) + \chi(B) - \chi(A \cap B),\]

and, because \(A \cap C = \varnothing\),

\[\chi(A \cup B \cup C) = \chi(A) + \chi(B \cup C) - \chi(A \cap B).\]

Subtracting gives

\[\chi\left( \mathcal{U}_{k}(r) \right) - \chi\left( \mathcal{U}_{k - 1}(r) \right) = \chi(B \cup C) - \chi(B).\]

As all sets involved are connected,

\[\beta_{1}(Y) = 1 - \chi(Y)\]

for each of them. Hence

\begin{equation}\tag{5.17}\label{eq:5.17}
\beta_{1}\left( \mathcal{U}_{k}(r) \right) - \beta_{1}\left( \mathcal{U}_{k - 1}(r) \right) = \beta_{1}\left( \mathcal{S}_{k - 1}(r) \cup \mathcal{S}_{k}(r) \right) - \beta_{1}\left( \mathcal{S}_{k - 1}(r) \right).
\end{equation}

The right-hand side depends only on blocks \(k - 1\) and \(k\).
Integrating \eqref{eq:5.17} against \(\psi(r)\) over
\(\left[ r_{0},r_{1} \right]\) yields

\[Z_{k} = \int_{r_{0}}^{r_{1}}\left[ \beta_{1}\left( \mathcal{S}_{k - 1}(r) \cup \mathcal{S}_{k}(r) \right) - \beta_{1}\left( \mathcal{S}_{k - 1}(r) \right) \right]\psi(r)dr,\]

so \(Z_{k} \in \mathcal{G}_{k - 1} \vee \mathcal{G}_{k}\).

Since \(\left( \mathcal{X}_{k} \right)_{k \geq 1}\) is i.i.d., the
representation
\(Z_{k} = F\left( {\widetilde{X}}^{(k - 1)},{\widetilde{X}}^{(k)} \right)\)
implies that \(\left( Z_{k} \right)_{k \geq 2}\) is stationary. It is
also \(1\)-dependent, since if \(|j - k| \geq 2\), then the pairs
\(\left( {\widetilde{X}}^{(k - 1)},{\widetilde{X}}^{(k)} \right)\) and
\(\left( {\widetilde{X}}^{(j - 1)},{\widetilde{X}}^{(j)} \right)\) are
functions of disjoint sets of i.i.d. blocks.
\end{proof}

We next record the required integrability.

\begin{proposition}[integrability of the cycle increments and
cycle oscillations]
\label{prop:5.10}
One has

\[\mathbb{E}\left[ \left| Z_{2} \right| \right] < \infty.\]

Moreover, if for \(k \geq 2\) we define the cycle
oscillation

\[M_{k} \coloneqq \sup_{\tau_{k - 1} \leq t \leq \tau_{k}}|\Phi_{\psi}(t) - \Phi_{\psi}\left( \tau_{k - 1} \right)|,\]

then

\[\mathbb{E}\left[ M_{2} \right] < \infty.\]
\end{proposition}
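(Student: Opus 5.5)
Both claims will follow from one pathwise domination: for $\tau_{1}\le t\le\tau_{2}$ I will bound $|\Phi_{\psi}(t)-\Phi_{\psi}(\tau_{1})|$ by sausage areas. Since $Z_{2}=\Phi_{\psi}(\tau_{2})-\Phi_{\psi}(\tau_{1})$, we have $|Z_{2}|\le M_{2}$, so it suffices to show $\mathbb{E}[M_{2}]<\infty$.

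\textbf{Step 1: pathwise bound.} For $t\in[\tau_{1},\tau_{2}]$ write
\[
\lvert\Phi_{\psi}(t)-\Phi_{\psi}(\tau_{1})\rvert\le \lVert\psi\rVert_{\infty}\int_{r_{0}}^{r_{1}}\bigl(\beta_{1}(K_{t}^{(r)})+\beta_{1}(K_{\tau_{1}}^{(r)})\bigr)\,dr .
\]
The pathwise part of Proposition~\ref{prop:5.5} gives
\[
\int_{r_{0}}^{r_{1}}\beta_{1}(K_{t}^{(r)})\,dr\le \frac{\lvert K_{t}^{(r_{1})}\rvert}{2\pi r_{0}}\le \frac{\lvert K_{\tau_{2}}^{(r_{1})}\rvert}{2\pi r_{0}},
\]
by monotonicity of $t\mapsto K_{t}^{(r_{1})}$. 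Hence
\[
M_{2}\le \frac{\lVert\psi\rVert_{\infty}}{\pi r_{0}}\,\lvert K_{\tau_{2}}^{(r_{1})}\rvert ,
\]
which is already uniform in $t$, so the supremum is handled. This crude bound uses the whole history before $\tau_{1}$. That history is only a single block, so it is fine for $k=2$. For general $k$ one would instead localize via the Euler-characteristic identity of Lemma~\ref{lem:5.9}, applied with $\mathcal{S}_{k}$ replaced by the partial block $X([\tau_{k-1},t])^{(r)}$, which also lies beyond $\mathcal{U}_{k-2}$ by the good-cut property. For $k=2$ no localization is needed.

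\textbf{Step 2: area at a random time.} We must show $\mathbb{E}\lvert K_{\tau_{2}}^{(r_{1})}\rvert<\infty$. Proposition~\ref{prop:5.5} only controls deterministic times, and $\tau_{2}$ is not independent of the path, so I will not condition naively. Instead I will use the elementary bound
\[
\lvert K_{s}^{(\rho)}\rvert\le \pi\bigl(\rho+\sup_{u\le s}\lVert X_{u}-X_{0}\rVert\bigr)^{2}.
\]
By Cauchy--Schwarz it then suffices to bound $\mathbb{E}[\sup_{u\le\tau_{2}}\lVert X_{u}-X_{0}\rVert^{2}]$. Split the displacement as $\nu u + B_{u}$: the drift part is at most $\nu\tau_{2}$, which is square integrable because $\tau_{2}=\Delta\tau_{1}+\Delta\tau_{2}$ has exponential moments (Proposition~\ref{prop:4.6} and Theorem~\ref{thm:4.5}). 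For the martingale part, the Burkholder--Davis--Gundy inequality at the stopping time $\tau_{2}$ gives $\mathbb{E}\sup_{u\le\tau_{2}}\lVert B_{u}\rVert^{2}\le C\,\mathbb{E}[\tau_{2}]<\infty$. Therefore $\mathbb{E}[M_{2}]<\infty$, and consequently $\mathbb{E}|Z_{2}|<\infty$.

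\textbf{Main obstacle.} The delicate point is justifying the pathwise inequality from Proposition~\ref{prop:5.5} simultaneously for all $t$ and all $r$. It relies on the curvature/coarea argument, which holds for almost every $r$; that is harmless after integrating in $r$, so Step 1 is safe. The probabilistic step is the random-time issue. A quadratic area bound is wasteful in principle, since the area should grow linearly, but BDG together with the exponential moments of $\tau_{2}$ makes it sufficient here. This avoids relying on the heuristic conditioning used in the Boundary Lemma.
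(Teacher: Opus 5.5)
Your Step~1 is fine given Proposition~\ref{prop:5.5}. At $k=2$ one has $\mathcal{U}_{0}(r)=\varnothing$, so your bound $M_2\le \lVert\psi\rVert_\infty\lvert K_{\tau_2}^{(r_1)}\rvert/(\pi r_0)$ is the paper's pathwise bound for $M_k$. The observation $|Z_2|\le M_2$ also neatly replaces the paper's separate treatment of $Z_2$ via \eqref{eq:5.17}.

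The gap is in Step~2, where you apply Burkholder--Davis--Gundy ``at the stopping time $\tau_2$''. But $\tau_2$ is not a stopping time. Indeed $\tau_1=H_{N_1}$, where $N_1$ is the first $n$ for which $G_n=\{\inf_{t\ge H_n}(U_t-U_{H_n})\ge -R\}$ occurs, and $G_n$ is an event about the whole trajectory after $H_n$. Hence $\{\tau_2\le t\}\notin\mathcal{F}_t$. BDG, Doob's inequality and optional stopping are therefore unavailable at $\tau_2$, and $\mathbb{E}\sup_{u\le\tau_2}\lVert B_u\rVert^2\le C\,\mathbb{E}[\tau_2]$ is unjustified as written. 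This is exactly the random-time issue you yourself flagged for Proposition~\ref{prop:5.5}. If $\tau_2$ were a stopping time you would not even need the quadratic area bound: the proof of \eqref{eq:5.10} goes through verbatim at any stopping time $\tau$, using $\{\tau_x\le\tau\}\in\mathcal{F}_{\tau_x}$.

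The step is easily repaired using the product structure of $X$.
\begin{itemize}
\item The quantities $\tau_2$, $N_1$, $N_2$ and $D\coloneqq\sup_{s\ge 0}(U_0-U_s)$ are functionals of $U$ alone, whereas $V-V_0$ is a Brownian motion independent of $U$.
\item Transversally, condition on $U$ and apply Doob's $L^2$ inequality at a fixed horizon. This gives $\mathbb{E}[\sup_{u\le\tau_2}|V_u-V_0|^2]\le 4\,\mathbb{E}[\tau_2]<\infty$.
\item Longitudinally, no martingale inequality is needed. Since $U<U_{\tau_1}$ before $\tau_1$, we have $\tau_2=H_{N_1+N_2}$, the first hitting time of $U_0+(N_1+N_2)L$. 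Hence $-D\le U_u-U_0\le L(N_1+N_2)$ for $u\le\tau_2$.
\item Here $\mathbb{P}(D\ge a)=e^{-2\nu a}$ by Lemma~\ref{lem:4.1}, and $N_1+N_2$ has geometric tails.
\end{itemize}
Therefore $\sup_{u\le\tau_2}\lVert X_u-X_0\rVert\in L^2$, and your area bound closes. Alternatively, split on $\{N_1+N_2=m\}$, apply BDG at the genuine stopping times $H_m$, and sum using Cauchy--Schwarz.

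Once repaired, your route is correct and departs from the paper's exactly where it matters. The paper passes to expectations by invoking the deterministic-time estimate of Proposition~\ref{prop:5.5} at the random time $\tau_2$. For $Z_2$ it conditions on the block lengths, which changes the law of the path. Your displacement bound needs only moment bounds on functionals of $U$ plus the independence of the transverse motion. The price, a quadratic rather than linear area estimate, is irrelevant for integrability.
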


\begin{proof}
By Lemma~\ref{lem:5.9} and formula \eqref{eq:5.17},

\[\left| Z_{k} \right| \leq \parallel \psi \parallel_{\infty}\int_{r_{0}}^{r_{1}}\left[ \beta_{1}\left( \mathcal{S}_{k - 1}(r) \cup \mathcal{S}_{k}(r) \right) + \beta_{1}\left( \mathcal{S}_{k - 1}(r) \right) \right] dr.\]

Now
\(\mathcal{S}_{k - 1}(r) \cup \mathcal{S}_{k}(r) = \left( \mathcal{K}_{k - 1} \cup \mathcal{K}_{k} \right)^{(r)}\),
and \(\mathcal{K}_{k - 1} \cup \mathcal{K}_{k}\) is the image of a
connected path segment of duration
\(\Delta\tau_{k - 1} + \Delta\tau_{k}\). Hence Proposition~\ref{prop:5.5} gives

\[\mathbb{E}\left[ \int_{r_{0}}^{r_{1}}\beta_{1}\left( \mathcal{S}_{k - 1}(r) \cup \mathcal{S}_{k}(r) \right)\, dr\, \middle| \,\Delta\tau_{k - 1},\Delta\tau_{k} \right] \leq C\left( 1 + \Delta\tau_{k - 1} + \Delta\tau_{k} \right),\]

and similarly

\[\mathbb{E}\left[ \int_{r_{0}}^{r_{1}}\beta_{1}\left( \mathcal{S}_{k - 1}(r) \right)\, dr\, \middle| \,\Delta\tau_{k - 1} \right] \leq C\left( 1 + \Delta\tau_{k - 1} \right).\]

Since regeneration lengths have finite mean by Proposition~\ref{prop:4.6}, it
follows that \(\mathbb{E}\left| Z_{2} \right| < \infty\).

Now fix \(t \in \left[ \tau_{k - 1},\tau_{k} \right]\), and
let

\[\mathcal{K}_{k,t} \coloneqq X\left( \left[ \tau_{k - 1},t \right] \right)\]

be the truncated \(k\)-th block. Repeating the Euler-characteristic
argument from Lemma~\ref{lem:5.9} with \(\mathcal{K}_{k,t}\) in place of
\(\mathcal{K}_{k}\), we obtain

\[\Phi_{\psi}(t) - \Phi_{\psi}\left( \tau_{k - 1} \right) = \int_{r_{0}}^{r_{1}}\left[ \beta_{1}\left( \left( \mathcal{K}_{k - 1} \cup \mathcal{K}_{k,t} \right)^{(r)} \right) - \beta_{1}\left( \mathcal{K}_{k - 1}^{(r)} \right) \right]\psi(r)dr.\]

Hence

\[\left| \Phi_{\psi}(t) - \Phi_{\psi}\left( \tau_{k - 1} \right) \right| \leq \parallel \psi \parallel_{\infty}\int_{r_{0}}^{r_{1}}\left[ \beta_{1}\left( \left( \mathcal{K}_{k - 1} \cup \mathcal{K}_{k,t} \right)^{(r)} \right) + \beta_{1}\left( \mathcal{K}_{k - 1}^{(r)} \right) \right] dr.\]

Taking the supremum over
\(t \in \left[ \tau_{k - 1},\tau_{k} \right]\), and using
the pathwise bound in Proposition~\ref{prop:5.5} together with the monotonicity of
the \(r_{1}\)-sausage area in \(t\), we get

\[M_{k} \leq \frac{\parallel \psi \parallel_{\infty}}{2\pi r_{0}}\left( \left| \left( \mathcal{K}_{k - 1} \cup \mathcal{K}_{k} \right)^{\left( r_{1} \right)} \right| + \left| \mathcal{K}_{k - 1}^{\left( r_{1} \right)} \right| \right).\]

Taking expectations and using again Proposition~\ref{prop:5.5} together with
\(\mathbb{E}\left[ \Delta\tau_{1} \right] < \infty\) from
Proposition~\ref{prop:4.6}, we conclude that
\(\mathbb{E}\left[ M_{2} \right] < \infty\).
\end{proof}

\begin{corollary}[uniform \(L^{1}\)-bound for the last incomplete
cycle]
\label{cor:5.11}
Let

\[R_{T} \coloneqq \Phi_{\psi}(T) - \Phi_{\psi}\left( \tau_{N(T)} \right),\quad\quad T \geq 0.\]

Then

\[\sup_{T \geq 1}\mathbb{E}\left[ \lvert R_{T} \rvert \right] < \infty.\]
\end{corollary}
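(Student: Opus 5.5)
The plan is to bound $\lvert R_T\rvert$ pathwise by the sausage areas of the (at most two) regeneration blocks that straddle $T$, and then control the expectation of those areas uniformly in $T$ by a renewal-measure argument. The difficulty is the inspection paradox: the block covering time $T$ is size-biased, so we cannot simply quote $\mathbb{E}[M_2]<\infty$.

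\emph{Pathwise reduction.} Set $A_k \coloneqq \lvert \mathcal{K}_k^{(r_1)}\rvert$ and $k=N(T)+1$.
\begin{itemize}[leftmargin=2em]
\item If $N(T)\ge 1$, then $\tau_{k-1}\le T<\tau_k$, so $\lvert R_T\rvert\le M_k$. The bound obtained in the proof of Proposition~\ref{prop:5.10}, together with subadditivity of area under unions, gives
\[\lvert R_T\rvert\le c\,(A_{k-1}+A_k), \qquad c=\frac{\lVert\psi\rVert_\infty}{\pi r_0}.\]
\item If $N(T)=0$, the pathwise bound of Proposition~\ref{prop:5.5} and monotonicity of the sausage area in $t$ give $\lvert R_T\rvert=\lvert\Phi_\psi(T)\rvert\le c\,A_1$.
\end{itemize}
Hence it suffices to bound
\[\mathbb{E}\Bigl[\sum_{k\ge 1}(A_{k-1}+A_k)\mathbf{1}_{\{\tau_{k-1}\le T<\tau_k\}}\Bigr]\]
uniformly in $T\ge 1$, with the convention $A_0=0$.

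\emph{Renewal estimate for the term $A_k$.} The pair $(A_k,\Delta\tau_k)$ is a functional of block $k$. By Theorem~\ref{thm:4.5} it is independent of $\tau_{k-1}$ and distributed as $(A_1,\Delta\tau_1)$. Therefore
\[\mathbb{E}\Bigl[\sum_k A_k\mathbf{1}_{\{\tau_{k-1}\le T<\tau_k\}}\Bigr]=\int_{[0,T]} g(T-s)\,U(ds),\qquad g(u)\coloneqq \mathbb{E}\bigl[A_1\mathbf{1}_{\{\Delta\tau_1>u\}}\bigr],\]
where $U=\sum_{k\ge0}\mathbb{P}(\tau_k\in\cdot)$ is the renewal measure. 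The standard bound $U([x,x+1])\le U([0,1])\eqqcolon c_U<\infty$ holds because $\Delta\tau_1>0$ with finite mean. Splitting $[0,T]$ into unit intervals and using that $g$ is nonincreasing, the integral is at most
\[c_U\sum_{j\ge0}g(j)\le c_U\,\mathbb{E}\bigl[A_1(1+\Delta\tau_1)\bigr].\]

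\emph{The term $A_{k-1}$.} This block is correlated with $\tau_{k-1}$, so we condition instead at $\tau_{k-2}$. By independence of blocks $k-1$ and $k$,
\[\mathbb{E}\bigl[A_{k-1}\mathbf{1}_{\{\tau_{k-1}\le T<\tau_k\}}\bigr]\le \mathbb{E}\bigl[A_{k-1}\mathbf{1}_{\{\tau_{k-2}\le T\}}\,\overline F(T-\tau_{k-1})\bigr],\qquad \overline F(u)=\mathbb{P}(\Delta\tau_1>u).\]
We then crudely bound $\overline F(T-\tau_{k-1})\le \mathbf{1}_{\{\Delta\tau_{k-1}> T-\tau_{k-2}-1\}}+\overline F(\,\cdot\,)$ restricted to the regions $\tau_{k-1}\ge T-1$ and $\tau_{k-1}<T-1$ respectively. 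Summing over $k$ with the same renewal-measure argument reduces this to finiteness of $\mathbb{E}[A_1(1+\Delta\tau_1)]$ and of $\mathbb{E}[A_1]\sum_j\overline F(j)\le \mathbb{E}[A_1](1+\mathbb{E}\Delta\tau_1)$. The bookkeeping here is the fiddly part, but each piece is of the form already treated.

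\emph{Main obstacle: the joint moment $\mathbb{E}[A_1\Delta\tau_1]<\infty$.} I would use Cauchy--Schwarz. For the second moment of the area, use
\[A_1\le \pi\bigl(r_1+\sup_{t\le\tau_1}\lVert X_t-X_0\rVert\bigr)^2,\qquad \sup_{t\le\tau_1}\lVert X_t-X_0\rVert\le \nu\tau_1+\sup_{t\le\tau_1}\lVert B_t\rVert.\]
The first term has all moments by the exponential moments of Proposition~\ref{prop:4.6}. For the Brownian supremum, bound
\[\mathbb{P}\Bigl(\sup_{t\le\tau_1}\lVert B_t\rVert>x\Bigr)\le \mathbb{P}(\tau_1>x)+\mathbb{P}\Bigl(\sup_{t\le x}\lVert B_t\rVert>x\Bigr),\]
where the first term decays exponentially (again by Proposition~\ref{prop:4.6}) and the second has Gaussian-type decay in $x$. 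This gives $\mathbb{E}[A_1^2]<\infty$. Combined with $\mathbb{E}[\Delta\tau_1^2]<\infty$, also from Proposition~\ref{prop:4.6}, Cauchy--Schwarz yields $\mathbb{E}[A_1\Delta\tau_1]<\infty$. This completes the uniform bound $\sup_{T\ge1}\mathbb{E}\lvert R_T\rvert<\infty$.
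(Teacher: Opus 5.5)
Your argument is correct, granting (as the paper does) the i.i.d.\ block structure of Theorem~\ref{thm:4.5} and the pathwise area bound of Proposition~\ref{prop:5.5}. It follows a different route from the paper's proof.

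The paper stays local to the last cut. Via Lemmas~\ref{lem:5.6} and~\ref{lem:5.7}, it bounds $\lvert R_T\rvert$ by integrated Betti numbers of offsets of two pieces: the pre-cut window $X([\tau_n-\Theta_n^-(a),\tau_n])$ and the partial segment $X([\tau_n,T])$, with $n=N(T)$. It then conditions on $\Theta_n^-(a)$ and on the age $T-\tau_{N(T)}$, applies Proposition~\ref{prop:5.5} conditionally, and finishes with $\mathbb{E}[\Theta_n^-(a)]\le a/\nu$ and a uniform bound on the mean age derived from $\mathbb{E}[\Delta\tau_1^2]<\infty$.

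You instead accept the cruder whole-block bound $\lvert R_T\rvert\le c\,(A_{N(T)}+A_{N(T)+1})$, which needs Proposition~\ref{prop:5.5} only in pathwise form. You then handle the random index by an explicit renewal-measure computation, in which the size bias of the straddling block surfaces as the joint moment $\mathbb{E}[A_1\Delta\tau_1]$. This is the counterpart of the paper's second-moment condition: your estimate with $A_1$ replaced by $\Delta\tau_1$ gives the paper's bound on the mean age.

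The paper's route is shorter and only involves a window of longitudinal width $a$ around the cut. Yours is more careful exactly where the paper is loose. On $\{N(T)=n,\ T-\tau_n=t\}$ the segment $X([\tau_n,T])$ is not a fresh drifted Brownian path of duration $t$, because it carries the information $\Delta\tau_{n+1}>t$. Also, Proposition~\ref{prop:4.8} is stated for a deterministic index, not for $k=N(T)$. You, by contrast, only ever integrate functionals of a single block against its own law.

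Two minor points:
\begin{itemize}
\item The bookkeeping for the $A_{k-1}$ term can be skipped. Block independence and the identity $\int_{[0,x]}\overline{F}(x-s)\,U(ds)=\sum_{j\ge0}\mathbb{P}(N(x)=j)=1$ give directly $\sum_{k\ge2}\mathbb{E}\bigl[A_{k-1}\mathbf{1}_{\{\tau_{k-1}\le T<\tau_k\}}\bigr]=\mathbb{E}\bigl[A_1\mathbf{1}_{\{\Delta\tau_1\le T\}}\bigr]\le\mathbb{E}[A_1]$. In other words, the last completed block is not size-biased.
\item $\mathbb{P}(\sup_{t\le x}\lVert B_t\rVert>x)$ decays like $e^{-cx}$, not in a Gaussian way in $x$. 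This still gives all moments of $A_1$.
\end{itemize}
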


\begin{proof}
Fix \(T \geq 0\), and write

\[n \coloneqq N(T),\quad\quad A_{T} \coloneqq T - \tau_{n}.\]

Thus \(T \in [\tau_{n},\tau_{n + 1})\), and \(A_{T}\) is the age
of the current regeneration cycle at time \(T\).

Let

\[\mathcal{K}_{n,T} \coloneqq X\left( \left[ \tau_{n},T \right] \right)\]

be the partial path segment of the current cycle up to time \(T\).
Arguing exactly as in the proof of Lemma~\ref{lem:5.9}, but with
\(\mathcal{K}_{n,T}\) in place of the full block
\(\mathcal{K}_{n + 1}\), one sees that the increment from \(\tau_{n}\)
to \(T\) depends only on the local pre-cut window of block \(n\) and the
partial post-cut segment \(\mathcal{K}_{n,T}\). More precisely, using
the exact localization argument of Lemma~\ref{lem:5.6} and the planar intersection
bound of Lemma~\ref{lem:5.7}, one obtains

\begin{equation}\tag{5.18}\label{eq:5.18}
\left| R_{T} \right| \leq \parallel \psi \parallel_{\infty}\int_{r_{0}}^{r_{1}}\left[ \beta_{1}\left( A_{n,T}(r) \right) + \beta_{1}\left( B_{n,T}(r) \right) + \beta_{1}\left( W_{n,T}(r) \right) + 1 \right]\, dr,
\end{equation}

where

- \(A_{n,T}(r)\) is the \(r\)-offset of the local pre-cut window
\(X\left( \left[ \tau_{n} - \Theta_{n}^{-}(a),\,\tau_{n} \right] \right),\)

- \(B_{n,T}(r)\) is the \(r\)-offset of the partial post-cut segment
\(X\left( \left[ \tau_{n},T \right] \right),\)

- and \(W_{n,T}(r) \coloneqq A_{n,T}(r) \cup B_{n,T}(r).\)

Conditioning on \(\Theta_{n}^{-}(a)\) and \(A_{T}\), Proposition~\ref{prop:5.5}
yields a constant \(C < \infty\), depending only on
\(\left( r_{0},r_{1},\psi,\mu \right)\), such that

\[\mathbb{E[}\left| R_{T} \right|\,\left| \,\Theta_{n}^{-}(a),A_{T} \right] \leq C\left( 1 + \Theta_{n}^{-}(a) + A_{T} \right).\]

Taking expectations gives

\begin{equation}\tag{5.19}\label{eq:5.19}
\mathbb{E}\left[ \lvert R_{T} \rvert \right] \leq C\left( 1 + \mathbb{E}\left[ \Theta_{n}^{-}(a) \right] + \mathbb{E}\left[ A_{T} \right] \right).
\end{equation}

Now \(\Theta_{n}^{-}(a)\) is stochastically dominated by
\(\sigma_{a}^{+}\) by Proposition~\ref{prop:4.8}, so

\[\mathbb{E}\left[ \Theta_{n}^{-}(a) \right] \leq \frac{a}{\nu} < \infty.\]

It remains to control \(\mathbb{E}\left[ A_{T} \right]\)
uniformly in \(T\). Since the regeneration lengths \(\Delta\tau_{k}\)
have finite second moment by Proposition~\ref{prop:4.6}, standard renewal theory
(\cite{ref23}) implies that the age process

\[A_{T} = T - \tau_{N(T)}\]

has uniformly bounded mean:

\[\sup_{T \geq 0}\mathbb{E}\left[ A_{T} \right] < \infty.\]

Therefore \eqref{eq:5.19} yields

\[\sup_{T \geq 1}\mathbb{E}\left[ \lvert R_{T} \rvert \right] < \infty.\]

This proves the corollary.
\end{proof}

We now obtain the law of large numbers at regeneration times.

\begin{theorem}[LLN at regeneration times]
\label{thm:5.12}
There exists a deterministic constant

\[\gamma_{\psi} \coloneqq \mathbb{E}\left[ Z_{2} \right]\]

such that

\begin{equation}\tag{5.20}\label{eq:5.20}
\frac{\Phi_{\psi}\left( \tau_{n} \right)}{n} \rightarrow \gamma_{\psi}\text{almost surely and in }L^{1}.
\end{equation}
\end{theorem}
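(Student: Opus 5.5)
We write the telescoping decomposition
\[
\Phi_{\psi}(\tau_{n}) = Z_{1} + \sum_{k=2}^{n} Z_{k},
\]
and treat the two pieces separately.

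The first term is a single random variable. It is $\sigma(\widetilde{X}^{(1)})$-measurable, and by the pathwise bound of Proposition~\ref{prop:5.5},
\[
|Z_{1}| \leq \|\psi\|_{\infty}\,\bigl|\mathcal{K}_{1}^{(r_{1})}\bigr|/(2\pi r_{0}).
\]
The second half of that proof, with conditioning on $\Delta\tau_{1}$, gives $\mathbb{E}|Z_{1}| \leq C(1+\mathbb{E}[\tau_{1}]) < \infty$ by Proposition~\ref{prop:4.6}. Hence $Z_{1}/n \to 0$ almost surely and in $L^{1}$.

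For the sum, Lemma~\ref{lem:5.9} gives $Z_{k} = F(\widetilde{X}^{(k-1)},\widetilde{X}^{(k)})$ for $k \geq 2$, with $(\widetilde{X}^{(j)})$ i.i.d. by Theorem~\ref{thm:4.5}, and Proposition~\ref{prop:5.10} gives $\mathbb{E}|Z_{2}| < \infty$. To get the strong law for a stationary $1$-dependent sequence, I will split indices by parity. The subsequences $(Z_{2j})_{j\geq 1}$ and $(Z_{2j+1})_{j\geq 1}$ are each i.i.d., since they are functions of disjoint consecutive pairs of i.i.d. blocks, and each has the law of $Z_{2}$. Kolmogorov's strong law applied to each gives
\[
\frac{1}{m}\sum_{j=1}^{m} Z_{2j} \to \mathbb{E}[Z_{2}] \quad\text{and}\quad \frac{1}{m}\sum_{j=1}^{m} Z_{2j+1} \to \mathbb{E}[Z_{2}] \quad \text{a.s.}
\]
Recombining with $m \approx n/2$ yields $\frac{1}{n}\sum_{k=2}^{n} Z_{k} \to \gamma_{\psi} = \mathbb{E}[Z_{2}]$ almost surely. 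Equivalently, $(Z_{k})_{k\ge2}$ is a factor of the i.i.d. shift on block sequences, hence ergodic, and Birkhoff's theorem applies directly.

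For $L^{1}$ convergence, I will avoid extra moment assumptions by using the $L^{1}$ strong law for i.i.d. sequences: Ces\`aro averages of i.i.d. integrable variables converge in $L^{1}$ because the family $\{\frac{1}{m}\sum_{j\le m} \xi_{j}\}$ is uniformly integrable, being averages of a single uniformly integrable law. Applying this to each parity subsequence and using the triangle inequality gives $L^{1}$ convergence of $\frac{1}{n}\sum_{k=2}^{n} Z_{k}$. Together with $\mathbb{E}|Z_{1}|/n \to 0$, this proves \eqref{eq:5.20}.

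I expect the main obstacle to be the justification of the standing inputs rather than the limit argument itself. Specifically, the integrability $\mathbb{E}|Z_{1}| < \infty$ requires the conditional use of Proposition~\ref{prop:5.5} given a block length that is a stopping-time functional; the bound should instead be taken via the area bound on $|\mathcal{K}_{1}^{(r_{1})}| \le |K_{\tau_{1}}^{(r_{1})}|$ together with a linear-in-time expected area up to a stopping time (Wald-type). I would first try to establish this directly: apply the hitting-time argument of \eqref{eq:5.11} with $t$ replaced by the stopping time $\tau_{1}$, bounding $\mathbb{P}(\tau_{x}\le\tau_{1})$ by $m_{\rho,\delta}^{-1}\mathbb{E}\int_{0}^{\tau_{1}+\delta}\mathbf{1}\{X_{s}\in\overline{B}(x,2\rho)\}\,ds$ via the strong Markov property. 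This gives $\mathbb{E}|K_{\tau_{1}}^{(r_{1})}| \le 4\pi r_{1}^{2}(\mathbb{E}[\tau_{1}]+\delta)/m_{r_{1},\delta}$. The same device, applied to two consecutive blocks, underlies $\mathbb{E}|Z_{2}|<\infty$ in Proposition~\ref{prop:5.10}, which we take as given.
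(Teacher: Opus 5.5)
Granting the inputs the paper's proof also uses (Lemma~\ref{lem:5.9}, Theorem~\ref{thm:4.5}, Proposition~\ref{prop:5.10}), your argument is correct, but it reaches the limit by a different route.

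\textbf{Comparison with the paper.} The paper argues that $(Z_k)_{k\ge2}$ is stationary and $1$-dependent, hence $\alpha$-mixing and ergodic, and then applies Birkhoff's theorem. It writes out only the almost-sure half and dismisses $Z_1/n$ in one line, so the $L^1$ claim and the integrability of $Z_1$ are left implicit. You instead split the increments by parity. Each parity class is an i.i.d.\ sequence, because its members are one fixed function of disjoint pairs of i.i.d.\ blocks. You apply Kolmogorov's strong law to each class, and you get $L^1$ convergence from uniform integrability of Ces\`aro means of an identically distributed integrable family, together with $\mathbb{E}|Z_1|<\infty$. This avoids mixing and ergodic theory entirely and makes both the $L^1$ half and the first-cycle term explicit. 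The paper's route is shorter and would apply to any stationary ergodic increment sequence, not just block functions.

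\textbf{A correction to your last paragraph.} $\tau_1$ is not a stopping time. Each good-cut event $G_n$ depends on the whole trajectory after $H_n$, so $\{\tau_1\le t\}\notin\mathcal{F}_t$, and likewise $\{\tau_x\le\tau_1\}\notin\mathcal{F}_{\tau_x}$. The strong-Markov step in your Wald-type area bound is therefore unavailable.

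You are right to distrust conditioning on $\Delta\tau_1$ as if the block were a fresh path of that duration, but the repair must not use the stopping property. For instance,
\[
|\mathcal{K}_1^{(r_1)}|\le\pi\bigl(r_1+\nu\tau_1+\sup_{s\le\tau_1}\|B_s\|\bigr)^2,
\]
and
\[
\mathbb{E}\Bigl[\sup_{s\le\tau_1}\|B_s\|^2\Bigr]\le\sum_{m\ge0}\Bigl(\mathbb{E}\Bigl[\sup_{s\le m+1}\|B_s\|^4\Bigr]\Bigr)^{1/2}\mathbb{P}(\tau_1\ge m)^{1/2}<\infty
\]
by Doob's inequality and the exponential moment in Proposition~\ref{prop:4.6}.

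\textbf{Consequence for Theorem~\ref{thm:4.5}.} The same fact affects Theorem~\ref{thm:4.5}, whose proof invokes the strong Markov property at $\tau_{k-1}$. By the good-cut property, $\langle\widetilde{X}^{(2)}(t),e\rangle\ge-R$ for all $t$. By contrast, $\widetilde{X}^{(1)}$ violates this with positive probability, for example when $U$ reaches $U_0-R-1$ before $U_0+L$. So the first block is not distributed like the later ones, and $Z_2$ need not be distributed like $Z_k$ for $k\ge3$.

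The usual decomposition over $\{N_1=n\}$ still shows that the blocks are independent and that $\widetilde{X}^{(k)}$, $k\ge2$, are identically distributed. Your parity argument therefore runs unchanged on $(Z_k)_{k\ge3}$, with $Z_2$ treated like $Z_1$. The limit is then $\mathbb{E}[Z_3]$, which need not equal $\mathbb{E}[Z_2]$. This affects the paper's Birkhoff argument in exactly the same way.
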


\begin{proof}
By Lemma~\ref{lem:5.9} and Proposition~\ref{prop:5.10},
\(\left( Z_{k} \right)_{k \geq 2}\) is stationary, \(1\)-dependent, and
integrable. Since a stationary 1-dependent sequence is
\(\alpha\)-mixing \cite{ref6}, it is ergodic, and
Birkhoff's theorem \cite{ref3} yields

\begin{equation}\tag{5.21}\label{eq:5.21}
\frac{1}{n}\sum_{k = 2}^{n}Z_{k}\to \mathbb{E}\left[ Z_{2} \right]\quad\quad\text{almost surely.}
\end{equation}

Since

\[\Phi_{\psi}\left( \tau_{n} \right) = Z_{1} + \sum_{k = 2}^{n}Z_{k},\]

the contribution of \(Z_{1}/n\) vanishes, and the claim follows.
\end{proof}

Finally, to pass from regeneration times to continuous time and thus
obtain a law of large numbers for smoothed persistence intensity, we
need the following Lemma~\ref{lem:5.13}.

\begin{lemma}[time-change lemma for $L^{1}$ convergence]
\label{lem:5.13}
Let \(\left( \tau_{n} \right)_{n \geq 0}\) be an increasing
sequence of random times with \(\tau_{0} = 0\), and define

\[N(T) \coloneqq \max\{ n \geq 0:\tau_{n} \leq T\},\quad\quad T \geq 0.\]

Let \(\left( A_{n} \right)_{n \geq 0}\) be real-valued
random variables and let \(Y = \left( Y_{T} \right)_{T \geq 0}\)
be a real-valued process such that

\[Y_{T} = A_{N(T)} + R_{T},\quad\quad T \geq 0,\]

for some remainder term \(R_{T}\).

Assume that:

1. there exists \(a \in \mathbb{R}\) such that
\(\frac{A_{n}}{n} \rightarrow a\quad\quad\text{in }L^{1}\quad\text{as }n \rightarrow \infty;\)

2. there exists \(c \in (0,\infty)\) such that
\(\frac{N(T)}{T} \rightarrow c\quad\quad\text{in }L^{1}\quad\text{as }T \rightarrow \infty;\)

3. the remainder term is uniformly \(L^{1}\)-bounded:
\(\sup_{T \geq 1}\mathbb{E}\left[ \lvert R_{T} \rvert \right] < \infty.\)

Then,
\(\frac{Y_{T}}{T} \rightarrow ac\quad\quad\text{in }L^{1}\quad\text{as }T \rightarrow \infty.\)
\end{lemma}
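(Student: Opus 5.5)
The plan is to split the error into a ``frozen index'' part and a ``random index'' part. Write
\[
\frac{Y_{T}}{T} - ac = \frac{A_{N(T)} - aN(T)}{T} + a\Bigl( \frac{N(T)}{T} - c \Bigr) + \frac{R_{T}}{T}.
\]
The third term satisfies $\mathbb{E}\lvert R_{T}\rvert /T \leq \sup_{S\geq 1}\mathbb{E}\lvert R_{S}\rvert / T \to 0$ by hypothesis 3. The second term tends to $0$ in $L^{1}$ directly by hypothesis 2. Everything therefore reduces to showing $\mathbb{E}\lvert A_{N(T)} - aN(T)\rvert = o(T)$.

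For this term, set $D_{n} \coloneqq A_{n} - an$. Hypothesis 1 says exactly $\varepsilon_{n} \coloneqq \mathbb{E}\lvert D_{n}\rvert /n \to 0$. I would fix $K > c$ and $\delta > 0$, and decompose according to the value of $N(T)$:
\[
\mathbb{E}\lvert D_{N(T)}\rvert = \sum_{n \leq \delta T}\mathbb{E}\bigl[\lvert D_{n}\rvert ;N(T)=n\bigr] + \sum_{\delta T < n \leq KT}\mathbb{E}\bigl[\lvert D_{n}\rvert ;N(T)=n\bigr] + \mathbb{E}\bigl[\lvert D_{N(T)}\rvert ;N(T) > KT\bigr].
\]

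\emph{Small indices.} These are controlled by the fact that $N(T)/T \to c > 0$ in $L^{1}$, hence in probability, so $\mathbb{P}(N(T)\leq \delta T)\to 0$ for $\delta < c$. One still needs the size of $\lvert D_{n}\rvert$ on that event to be $o(T)$.

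\emph{Large indices.} These are controlled by $\mathbb{P}(N(T) > KT) \to 0$ together with the $L^{1}$ convergence of $N(T)/T$, which gives uniform integrability of $N(T)/T$. Again one needs $\lvert D_{N(T)}\rvert /N(T)$ to be uniformly integrable on that event.

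\emph{Middle range.} Here the natural bound is
\[
\sum_{\delta T<n\leq KT}\mathbb{E}\bigl[\lvert D_{n}\rvert ;N(T)=n\bigr] \leq \mathbb{E}\Bigl[\max_{\delta T<n\leq KT}\lvert D_{n}\rvert\Bigr].
\]
Hypothesis 1 alone only yields $\sup_{n\in[\delta T,KT]}\mathbb{E}\lvert D_{n}\rvert \leq KT\sup_{n\geq \delta T}\varepsilon_{n} = o(T)$. That is a bound on the maximum of the expectations, not on the expectation of the maximum.

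The main obstacle is precisely this interchange: converting the pointwise-in-$n$ $L^{1}$ convergence of hypothesis 1 into control of $D_{n}$ evaluated at the random index $N(T)$. The approach I would try first is to bound $\mathbb{E}[\lvert D_{n}\rvert ;N(T)=n]$ using uniform integrability of the family $\{D_{n}/n\}$. This family is uniformly integrable because it converges in $L^{1}$, so for every $\eta > 0$ there is $M$ with $\sup_{n}\mathbb{E}[\lvert D_{n}\rvert /n ;\lvert D_{n}\rvert > Mn] < \eta$. On the event $\lvert D_{N(T)}\rvert \leq M N(T)$ the contribution is at most $M\,\mathbb{E}[N(T);\ldots]$, and one would then aim to show the relevant probabilities vanish.

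I expect this step to fail in full generality. An $L^{1}$-small but very spiky sequence $D_{n}$, with rare large values on events $E_{n}$, can be aligned with the random index $N(T)$ through a coupling. Each $E_{n}$ individually has small probability, but their union over $n\in[\delta T,KT]$ can have probability of order $1/T$ while $D_{n}$ is of order $T^{2}$ there, leaving an $O(1)$ contribution to $\mathbb{E}\lvert D_{N(T)}\rvert /T$.

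So the honest plan is as follows. Carry out the decomposition above, and prove the conclusion rigorously for the small-index and large-index parts. For the middle range, I expect a proof from hypotheses 1--3 alone not to go through. If the uniform-integrability route cannot be closed, I would record that the lemma as worded needs an additional ingredient, namely uniform integrability of $\max_{n\leq m}\lvert D_{n}\rvert /m$, or independence between $N(T)$ and $(A_{n})$. I would also construct the spiky-sequence counterexample explicitly to confirm that this additional ingredient cannot be dropped.
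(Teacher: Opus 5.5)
Your diagnosis is correct. The statement cannot be proved from hypotheses 1--3, and the step you flag is exactly where the paper's proof breaks.

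The paper uses the same three-term split and writes $\mathbb{E}\lvert A_{N(T)}-aN(T)\rvert=\sum_{n}\mathbb{E}[\lvert A_n-an\rvert;N(T)=n]$. For $n\ge m$ it then bounds each summand by $\varepsilon n\,\mathbb{P}(N(T)=n)$, citing only the unconditional estimate $\mathbb{E}\lvert A_n-an\rvert\le\varepsilon n$ of \eqref{eq:5.24}. That inequality requires $\lvert A_n-an\rvert$ to be independent of, or at least not positively correlated with, the event $\{N(T)=n\}$. This is precisely the ``maximum of expectations versus expectation at the random index'' interchange you isolate, and nothing in 1--3 provides it.

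In fact the lemma is false as stated. Let $U$ be uniform on $[0,1)$. For $k\ge1$ let $B_k\coloneqq\{k^2,\dots,(k+1)^2-1\}$ and $\nu_k\coloneqq k^2+\lfloor(2k+1)U\rfloor$, which is uniform on $B_k$. Put $\tau_0=0$, $A_0=0$, $R_T\equiv0$, and for $n\in B_k$
\[
A_n\coloneqq n\,\mathbf{1}_{\{n=\nu_k\}},\qquad \tau_n\coloneqq k^2+\frac{n-k^2}{4(k+1)}+\mathbf{1}_{\{n>\nu_k\}}.
\]
\begin{itemize}
\item The $\tau_n$ are strictly increasing with $\tau_{k^2}=k^2$, so $N(T)\in B_k$ for $T\in[k^2,(k+1)^2)$. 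Hence $N(T)/T\to1$ surely and boundedly, which is hypothesis 2 with $c=1$.
\item Hypothesis 1 holds with $a=0$, since $\mathbb{E}\lvert A_n\rvert/n=\mathbb{P}(\nu_k=n)=1/(2k+1)\to0$. Hypothesis 3 is trivial.
\item For $T_k\coloneqq k^2+\tfrac34$ one has $\tau_{\nu_k}<k^2+\tfrac12<T_k<k^2+1\le\tau_{\nu_k+1}$, so $N(T_k)=\nu_k$.
\item Therefore $Y_{T_k}/T_k=\nu_k/T_k\ge k^2/(k^2+\tfrac34)\to1\neq0=ac$.
\end{itemize}
So $Y_T/T$ does not converge to $ac$, not even in probability.

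Two points in your plan need correcting.

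\begin{itemize}
\item \textbf{The small- and large-index pieces are not provable either.} Spikes placed at indices $\le\delta T$, by letting $N$ stall at a random index on an event of vanishing probability, produce the same failure. So do spikes at indices $>KT$, by letting $N$ jump ahead. The three-range decomposition gains nothing, and your claim that those two parts can be done rigorously from 1--3 should be dropped.
\item \textbf{Uniform integrability of $\max_{n\le m}\lvert D_n\rvert/m$ is not a sufficient repair.} In the example above $\lvert D_n\rvert\le n$, so that family is bounded by $1$. The example exploits two things together. First, $D_n/n\to0$ only in $L^1$, not in any maximal or almost-sure sense ($\max_{n\in B_k}\lvert D_n\rvert/n=1$ for every $k$). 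Second, the index and the values are dependent.
\end{itemize}

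Independence of $(A_n)$ from $(\tau_n)$ does suffice: it is exactly what makes the paper's displayed inequality valid. But it fails in the intended application, where $A_n=\Phi_\psi(\tau_n)$ and $N(T)$ are functionals of the same path. So the $L^1$ part of Theorem~\ref{thm:5.14} cannot rest on this lemma. It needs a separate argument, for instance almost-sure convergence combined with uniform integrability of $\Phi_\psi(T)/T$.
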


\begin{proof}
We first treat the endpoint term $A_{N(T)}$. Write

\[\frac{A_{N(T)}}{T} = \frac{A_{N(T)}}{N(T)} \cdot \frac{N(T)}{T},\]

with the convention \(A_{0}/0 \coloneqq 0\) on the event \(\{ N(T) = 0\}\),
which is harmless for large \(T\).

We claim that

\[\frac{A_{N(T)}}{T} \rightarrow ac\quad\quad\text{in }L^{1}.\]

Indeed, by adding and subtracting \(a\, N(T)/T\),

\[\left| \frac{A_{N(T)}}{T} - ac \right| \leq \left\lvert \frac{A_{N(T)}}{N(T)} - a \right\rvert\frac{N(T)}{T} + \lvert a \rvert\left| \frac{N(T)}{T} - c \right|.\]

By assumption 2, the second term converges to \(0\) in \(L^{1}\). So it
remains to show that

\begin{equation}\tag{5.22}\label{eq:5.22}
\mathbb{E}\left[ \left\lvert \frac{A_{N(T)}}{N(T)} - a \right\rvert\frac{N(T)}{T} \right] \rightarrow 0.
\end{equation}

Now

\[\left\lvert \frac{A_{N(T)}}{N(T)} - a \right\rvert\frac{N(T)}{T} = \frac{1}{T}\,\lvert A_{N(T)} - aN(T) \rvert,\]

Hence

\begin{equation}\tag{5.23}\label{eq:5.23}
\mathbb{E}\left[ \left\lvert \frac{A_{N(T)}}{N(T)} - a \right\rvert\frac{N(T)}{T} \right] = \frac{1}{T}\sum_{n \geq 1}^{}\mathbb{E}\left[ \left\lvert A_{n} - an \right\rvert\,;\, N(T) = n \right].
\end{equation}

Fix \(\varepsilon > 0\). Since \(A_{n}/n \rightarrow a\) in \(L^{1}\),
there exists \(m \geq 1\) such that

\[\mathbb{E}\left[ \left\lvert \frac{A_{n}}{n} - a \right\rvert \right] \leq \varepsilon,\quad n \geq m.\]

Equivalently,

\begin{equation}\tag{5.24}\label{eq:5.24}
\mathbb{E}\left[ \lvert A_{n} - an \rvert \right] \leq \varepsilon n,\quad n \geq m.
\end{equation}

Split the sum in \eqref{eq:5.23} at \(m\):

\begin{align*}
\frac{1}{T}\sum_{n \geq 1} \mathbb{E}\left[ \left\lvert A_{n} - an \right\rvert \,;\, N(T) = n \right]
&= \frac{1}{T}\sum_{n = 1}^{m - 1} \mathbb{E}\left[ \left\lvert A_{n} - an \right\rvert \,;\, N(T) = n \right] \\
&\quad + \frac{1}{T}\sum_{n \geq m} \mathbb{E}\left[ \left\lvert A_{n} - an \right\rvert \,;\, N(T) = n \right].
\end{align*}

The first term is bounded by

\[\frac{1}{T}\sum_{n = 1}^{m - 1}\mathbb{E}\left[ \lvert A_{n} - an \rvert \right],\]

which tends to \(0\) as \(T \rightarrow \infty\), since the sum is
finite.

For the second term, using \eqref{eq:5.24},

\[\frac{1}{T}\sum_{n \geq m}^{}\mathbb{E}\left[ \left\lvert A_{n} - an \right\rvert\,;\, N(T) = n \right] \leq \frac{\varepsilon}{T}\sum_{n \geq m}^{}n\,\mathbb{P}\left( N(T) = n \right) \leq \varepsilon\,\mathbb{E}\left[ \frac{N(T)}{T} \right].\]

By assumption 2, the family \(N(T)/T\) is bounded in \(L^{1}\), so

\[\limsup_{T \rightarrow \infty}\mathbb{E}\left[ \left\lvert \frac{A_{N(T)}}{N(T)} - a \right\rvert\frac{N(T)}{T} \right] \leq \varepsilon\sup_{T \geq 1}\mathbb{E}\left[ \frac{N(T)}{T} \right].\]

Since \(\varepsilon > 0\) is arbitrary, this proves \eqref{eq:5.22}. Therefore

\begin{equation}\tag{5.25}\label{eq:5.25}
\frac{A_{N(T)}}{T} \rightarrow ac\quad\quad\text{in }L^{1}.
\end{equation}

Now return to \(Y_{T}\). By definition,

\[\frac{Y_{T}}{T} = \frac{A_{N(T)}}{T} + \frac{R_{T}}{T}.\]

By \eqref{eq:5.25}, the first term converges to \(ac\) in \(L^{1}\). By
assumption 3,

\[\mathbb{E}\left[ \left\lvert \frac{R_{T}}{T} \right\rvert \right] \leq \frac{1}{T}\sup_{s \geq 1}\mathbb{E}\left[ \lvert R_{s} \rvert \right] \rightarrow 0.\]

Hence

\[\frac{Y_{T}}{T} \rightarrow ac\quad\quad\text{in }L^{1},\]

as claimed, which proves Lemma~\ref{lem:5.13}.
\end{proof}

We can now state a law of large numbers for smoothed persistence
intensity.

\begin{theorem}[LLN for smoothed persistence intensity]
\label{thm:5.14}
Define

\[\rho_{\psi} \coloneqq \frac{\gamma_{\psi}}{\mathbb{E}\left[ \tau_{1} \right]} = \frac{\mathbb{E}\left[ Z_{2} \right]}{\mathbb{E}\left[ \tau_{1} \right]}.\]

Then

\begin{equation}\tag{5.26}\label{eq:5.26}
\frac{\Phi_{\psi}(T)}{T} \rightarrow \rho_{\psi}\text{ almost surely and in }L^{1},\quad\quad T \rightarrow \infty.
\end{equation}
\end{theorem}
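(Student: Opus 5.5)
We decompose $\Phi_{\psi}(T) = \Phi_{\psi}(\tau_{N(T)}) + R_{T}$, where $R_{T}$ is the last-incomplete-cycle remainder of Corollary~\ref{cor:5.11}, and treat the two pieces separately.

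\textbf{Main term, almost surely.} By Theorem~\ref{thm:5.12}, $\Phi_{\psi}(\tau_{n})/n \to \gamma_{\psi}$ a.s. By Proposition~\ref{prop:4.9}, $N(T)/T \to 1/\mathbb{E}[\tau_{1}]$ a.s. Since $N(T)\to\infty$ a.s., composing the two limits gives
\[
\frac{\Phi_{\psi}(\tau_{N(T)})}{T} = \frac{\Phi_{\psi}(\tau_{N(T)})}{N(T)}\cdot\frac{N(T)}{T} \to \rho_{\psi}
\quad\text{a.s.}
\]

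\textbf{Remainder, almost surely.} On $[\tau_{n},\tau_{n+1})$ we have $|R_{T}| \le M_{n+1}$, with $M_{k}$ the cycle oscillation of Proposition~\ref{prop:5.10}. It therefore suffices to show $M_{k}/k \to 0$ a.s. The variables $(M_{k})_{k\ge2}$ are identically distributed, since by the Euler-characteristic localisation in the proof of Proposition~\ref{prop:5.10} each $M_{k}$ is a fixed functional of the pair of blocks $(\widetilde X^{(k-1)},\widetilde X^{(k)})$. Moreover $\mathbb{E}[M_{2}]<\infty$. Hence
\[
\sum_{k}\mathbb{P}(M_{k} > \varepsilon k) \le \varepsilon^{-1}\,\mathbb{E}[M_{2}] + 1 < \infty,
\]
and Borel--Cantelli gives $M_{k}/k \to 0$ a.s. Combined with $N(T)/T \to 1/\mathbb{E}[\tau_{1}]$, this yields $R_{T}/T \le \bigl(M_{N(T)+1}/(N(T)+1)\bigr)\cdot\bigl((N(T)+1)/T\bigr) \to 0$ a.s. This settles the almost-sure statement.

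\textbf{$L^{1}$ convergence.} We apply Lemma~\ref{lem:5.13} with $A_{n}=\Phi_{\psi}(\tau_{n})$, $Y_{T}=\Phi_{\psi}(T)$ and the remainder $R_{T}$, and check its three hypotheses.
\begin{itemize}[leftmargin=2em]
\item Assumption 3 is exactly Corollary~\ref{cor:5.11}.
\item Assumption 2 is the $L^{1}$ half of Proposition~\ref{prop:4.9}. If a direct argument is wanted, the bound $N(T)\le N_{L}(T)$, where $N_{L}$ counts the level hits $H_{n}$, gives domination by a renewal process with exponential moments, so uniform integrability follows by elementary renewal theory.
\item Assumption 1 is the real obstacle, because Theorem~\ref{thm:5.12} is stated a.s.\ and in $L^{1}$ but the $L^{1}$ part needs justification.
\end{itemize}

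\textbf{The obstacle: assumption 1.} We write $A_{n}/n = Z_{1}/n + n^{-1}\sum_{k=2}^{n}Z_{k}$. The first term tends to $0$ in $L^{1}$ because $Z_{1}$ is integrable: apply Proposition~\ref{prop:5.5} to a path of duration $\tau_{1}$ and use $\mathbb{E}[\tau_{1}]<\infty$. For the second term, the family $\{n^{-1}\sum_{k=2}^{n}Z_{k}\}$ is uniformly integrable because the $Z_{k}$ are identically distributed and integrable. Indeed, averages of an identically distributed integrable family are uniformly integrable, by the standard estimate
\[
\mathbb{E}\bigl[|\bar Z_{n}|;F\bigr] \le \sup_{k}\mathbb{E}\bigl[|Z_{k}|;F\bigr],
\]
where $F$ is any event and $\bar Z_{n}$ denotes the average. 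The a.s.\ convergence \eqref{eq:5.21} then upgrades to $L^{1}$ convergence via Vitali's theorem.

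Lemma~\ref{lem:5.13} now gives $\Phi_{\psi}(T)/T \to \gamma_{\psi}/\mathbb{E}[\tau_{1}] = \rho_{\psi}$ in $L^{1}$. The a.s.\ and $L^{1}$ limits coincide, which completes the proof.
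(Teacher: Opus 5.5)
Your almost-sure argument is the paper's: you split off $R_T$, bound it by the cycle oscillation $M_{N(T)+1}$, and apply Borel--Cantelli to $M_k/k$. Your uniform-integrability/Vitali argument also usefully supplies the $L^1$ half of Theorem~\ref{thm:5.12}, which the paper only asserts.

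The gap is in the final $L^1$ step. Like the paper, you invoke Lemma~\ref{lem:5.13}, and that lemma is false as stated. Its proof bounds $\frac1T\sum_{n\ge m}\mathbb{E}[\lvert A_n-an\rvert;N(T)=n]$ by $\frac{\varepsilon}{T}\sum_{n\ge m}n\,\mathbb{P}(N(T)=n)$. That bound would require $A_n$ to be independent of $\{N(T)=n\}$. Here both are functionals of the same path, and hypothesis~1 only gives the non-summable bound $\varepsilon n$.

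A counterexample: let $U$ be uniform on $[0,1)$. For $n=4^j+i$ with $0\le i<2^j$, put $A_n=n\,(2^j/j)\,\mathbf{1}\{U\in[i2^{-j},(i+1)2^{-j})\}$, and put $A_n=0$ for all other $n$. Set $\tau_n=n$ outside these blocks. Inside the $j$-th block, place $\tau_n$ just below $4^j$ up to the unique index carrying the spike and just above $4^j$ afterwards, so that $N(4^j)$ is that index. Then $\mathbb{E}\lvert A_n/n\rvert=1/j\to0$, $\lvert N(T)-T\rvert\le\sqrt T+1$ and $R_T\equiv0$, yet $A_{N(4^j)}/4^j\ge 2^j/j\to\infty$. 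The hypotheses control $A_n$ only at deterministic indices, whereas $A_{N(T)}$ samples it at a path-dependent index. Some uniform integrability of $A_{N(T)}/T$ therefore has to be proved separately.

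You already have the tool to close the gap, and it is shorter than the route through Lemma~\ref{lem:5.13}. Since the almost-sure limit is established, Vitali reduces the $L^1$ claim to uniform integrability of $\{\Phi_{\psi}(T)/T:T\ge1\}$.
\begin{itemize}
\item By the pathwise bound in Proposition~\ref{prop:5.5}, $\lvert\Phi_{\psi}(T)\rvert\le\lVert\psi\rVert_\infty\lvert K_T^{(r_1)}\rvert/(2\pi r_0)$.
\item Since $K_T^{(r_1)}\subset\bigcup_{0\le j<\lceil T\rceil}X([j,j+1])^{(r_1)}$, we get $\lvert K_T^{(r_1)}\rvert\le\sum_{0\le j<\lceil T\rceil}\xi_j$ with $\xi_j\coloneqq\lvert X([j,j+1])^{(r_1)}\rvert$.
\item The $\xi_j$ are i.i.d.\ (independent stationary increments plus translation invariance of area) and integrable by \eqref{eq:5.10} with $t=1$.
\item Hence, for $T\ge1$, $\lvert\Phi_{\psi}(T)\rvert/T$ is dominated by a constant multiple of $\lceil T\rceil^{-1}\sum_{j<\lceil T\rceil}\xi_j$. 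This average is uniformly integrable by exactly the estimate you quote.
\end{itemize}

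This route bypasses Lemma~\ref{lem:5.13}, Corollary~\ref{cor:5.11}, and the $L^1$ halves of Proposition~\ref{prop:4.9} and Theorem~\ref{thm:5.12}. It also removes your claim $\mathbb{E}\lvert Z_1\rvert<\infty$, which is not justified as written. Proposition~\ref{prop:5.5} is stated for deterministic $t$, while $\tau_1$ depends on the whole future of the path through the good-cut events and is not even a stopping time. So ``apply it to a path of duration $\tau_1$'' does not follow directly.
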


\begin{proof}
By Proposition~\ref{prop:4.9},

\[\frac{N(T)}{T} \rightarrow \frac{1}{\mathbb{E}\left[ \tau_{1} \right]}\quad\quad\text{almost surely and in }L^{1}.\]

By Theorem~\ref{thm:5.12},

\[\frac{\Phi_{\psi}\left( \tau_{n} \right)}{n} \rightarrow \gamma_{\psi}\quad\quad\text{almost surely and in }L^{1}.\]

It remains to control the oscillation inside the last incomplete
regeneration cycle.

For \(T \in \left[ \tau_{N(T)},\tau_{N(T) + 1} \right]\), we
have

\[|\Phi_{\psi}(T) - \Phi_{\psi}\left( \tau_{N(T)} \right)| \leq M_{N(T) + 1}.\]

By Proposition~\ref{prop:5.10}, \(\left( M_{k} \right)_{k \geq 2}\) is stationary
with finite mean. In particular,

\[\sum_{n = 1}^{\infty}\mathbb{P}\left( M_{2} > \varepsilon n \right) \leq \frac{1}{\varepsilon}\mathbb{E}\left[ M_{2} \right] < \infty,\]

so by the first Borel--Cantelli lemma,

\[\frac{M_{n}}{n} \rightarrow 0\quad\quad\text{almost surely.}\]

Since
\(N(T)/T \rightarrow 1/\mathbb{E}\left[ \tau_{1} \right]\)
almost surely, it follows that

\[\frac{M_{N(T) + 1}}{T} = \frac{M_{N(T) + 1}}{N(T) + 1} \cdot \frac{N(T) + 1}{T} \rightarrow 0\quad\quad\text{almost surely.}\]

Thus

\[\frac{\Phi_{\psi}(T)}{T} = \frac{\Phi_{\psi}\left( \tau_{N(T)} \right)}{N(T)} \cdot \frac{N(T)}{T} + o(1)\quad\quad\text{almost surely,}\]

and the almost-sure limit is

\[\frac{\gamma_{\psi}}{\mathbb{E}\left[ \tau_{1} \right]} = \rho_{\psi}.\]

Next, we deal with \(L^{1}\) convergence. By Theorem~\ref{thm:5.12},
\[\frac{\Phi_{\psi}\left( \tau_{n} \right)}{n} \rightarrow \gamma_{\psi}\quad\text{in }L^{1},\]

and by Proposition~\ref{prop:4.9},

\[\frac{N(T)}{T} \rightarrow \frac{1}{\mathbb{E}\left[ \tau_{1} \right]}\quad\text{in }L^{1}.\]

The \(L^{1}\) convergence now follows from Lemma~\ref{lem:5.13} applied with

\(A_{n} = \Phi_{\psi}\left( \tau_{n} \right)\),
\(Y_{T} = \Phi_{\psi}(T)\), and
\(R_{T} = \Phi_{\psi}(T) - \Phi_{\psi}\left( \tau_{N(T)} \right)\)

using Theorem~\ref{thm:5.12}, Proposition~\ref{prop:4.9}, and Corollary~\ref{cor:5.11}. Hence
\[\frac{\Phi_{\psi}(T)}{T} \rightarrow \frac{\gamma_{\psi}}{\mathbb{E}\left[ \tau_{1} \right]}\quad\text{in }L^{1}.\]
\end{proof}

\subsection{Intensity measures}\label{intensity-measures}

The previous theorem immediately yields a limit object on the radius
axis.

\begin{corollary}[Betti-curve intensity measure]
\label{cor:5.15}
There exists a finite positive measure \(\lambda\) on
\(\left[ r_{0},r_{1} \right]\) such that for every
bounded continuous \(\psi\) supported in
\(\left[ r_{0},r_{1} \right]\),

\[\frac{1}{T}\int_{r_{0}}^{r_{1}}\beta_{1}^{T}(r)\,\psi(r)\, dr \rightarrow \int_{r_{0}}^{r_{1}}\psi(r)\,\lambda(dr)\quad\quad\text{almost surely and in }L^{1}.\]
\end{corollary}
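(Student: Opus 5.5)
The plan is to define $\lambda$ as the expected Betti-curve density per unit time and then apply Theorem~\ref{thm:5.14} to each fixed $\psi$. For a Borel set $E \subset [r_0,r_1]$, Theorem~\ref{thm:5.14} with $\psi = \mathbf{1}_E$ (bounded Borel, supported in the window) gives the constant $\rho_{\mathbf{1}_E}$. We set $\lambda(E) \coloneqq \rho_{\mathbf{1}_E}$.

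It is more convenient to produce $\lambda$ as a density. By the $L^1$ part of Theorem~\ref{thm:5.14}, for every bounded Borel $\psi$,
\[
\rho_\psi = \lim_{T\to\infty} \frac{1}{T}\,\mathbb{E}\left[\Phi_\psi(T)\right] = \lim_{T\to\infty} \int_{r_0}^{r_1} \frac{\mathbb{E}\left[\beta_1^T(r)\right]}{T}\,\psi(r)\,dr,
\]
where Fubini is justified by Proposition~\ref{prop:5.5}. So $\psi \mapsto \rho_\psi$ is linear, and it is nonnegative on $\psi \geq 0$ because $\beta_1 \geq 0$.

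The key quantitative input is a bound
\[
|\rho_\psi| \leq \|\psi\|_{\infty} \, \sup_{T \geq 1} \frac{1}{T}\,\mathbb{E}\int_{r_0}^{r_1} \beta_1^T(r)\,dr \leq C' \|\psi\|_\infty,
\]
which follows from Proposition~\ref{prop:5.5} since $C(1+T)/T \leq 2C$. With this bound I can conclude in either of two ways.

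The first is Riesz: restrict $\psi \mapsto \rho_\psi$ to $C([r_0,r_1])$. It is a positive bounded linear functional, so the Riesz representation theorem gives a finite positive Borel measure $\lambda$ with $\rho_\psi = \int \psi \, d\lambda$ for all continuous $\psi$.

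The second is more concrete. Let $f_T(r) \coloneqq \mathbb{E}[\beta_1^T(r)]/T$. These are uniformly bounded in $L^1([r_0,r_1])$, and $\int f_T \psi \to \rho_\psi$ for every bounded Borel $\psi$. The main obstacle is upgrading this to a density. That needs uniform integrability of $(f_T)$, which is not evident. A pointwise bound on $\mathbb{E}\beta_1^T(r)$ per unit time would give it: for instance, the curvature argument yields $\beta_1(K^{(r)}) \leq \mathcal{H}^1(\partial K^{(r)})/(2\pi r)$, but controlling $\mathbb{E}\,\mathcal{H}^1(\partial K^{(r)})$ uniformly in $r$ is extra work. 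If that bound is not available, I will settle for the Riesz route, which already gives the statement. Positivity and finiteness of $\lambda$ are then immediate: $\lambda([r_0,r_1]) = \rho_{\mathbf 1} \leq C'(r_1-r_0) < \infty$. Nontriviality is not needed for "finite positive" in the weak sense; strict positivity would require exhibiting holes with positive probability in a block, which I would not attempt.

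Finally, the convergence statement for continuous $\psi$ is exactly Theorem~\ref{thm:5.14} applied to that $\psi$, with limit $\rho_\psi = \int \psi \, d\lambda$. Both the almost sure and the $L^1$ modes are inherited directly. The identification of the a.s. limit with $\int \psi\, d\lambda$ uses only that the limit constant is deterministic and equal to the $L^1$ limit of expectations.
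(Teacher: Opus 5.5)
Your final argument is correct and is essentially the paper's proof: linearity and positivity of $\psi\mapsto\rho_\psi$, boundedness $|\rho_\psi|\le C\|\psi\|_\infty$ via Proposition~\ref{prop:5.5} (which you make more explicit than the paper's appeal to ``the integrability estimates already proved''), then Riesz on $C([r_0,r_1])$ and Theorem~\ref{thm:5.14} for each continuous $\psi$. Drop the opening definition $\lambda(E)\coloneqq\rho_{\mathbf 1_E}$ and the density discussion, since that set function would need a separate countable-additivity argument and the measure you actually use is the Riesz one; also, your own bound gives $\lambda([r_0,r_1])=\rho_{\mathbf 1}\le C'$ rather than $C'(r_1-r_0)$, a harmless slip.
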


\begin{proof}
By \eqref{eq:5.1} and Theorem~\ref{thm:5.14}, the map

\[\psi \mapsto \rho_{\psi}\]

is linear. It is bounded with respect to
\(\parallel \psi \parallel_{\infty}\), by the integrability estimates
already proved. Hence the Riesz representation theorem yields a finite
signed measure \(\lambda\) on \(\left[ r_{0},r_{1} \right]\)
such that

\[\rho_{\psi} = \int_{r_{0}}^{r_{1}}\psi(r)\,\lambda(dr).\]

Moreover, if \(\psi \geq 0\), then \(\Phi_{\psi}(T) \geq 0\) for every
\(T\), hence \(\rho_{\psi} \geq 0\). Therefore \(\lambda\) is actually
positive.
\end{proof}

Finally, one can express the same limit in diagram language.

\begin{corollary}[diagram-measure formulation on the test class
\(\varphi_{\psi}\)]
\label{cor:5.16}
For every bounded continuous \(\psi\) supported in
\(\left[ r_{0},r_{1} \right]\),

\[\frac{1}{T}\int_{\Delta}\varphi_{\psi}(b,d)\,\mu_{T}^{(1)}(db\, dd) \rightarrow \rho_{\psi}\quad\quad\text{almost surely and in }L^{1}.\]
\end{corollary}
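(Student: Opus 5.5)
The plan is to reduce Corollary~\ref{cor:5.16} directly to Theorem~\ref{thm:5.14} through the Betti-curve representation. No new probabilistic input is needed.

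Fix a bounded continuous \(\psi\) supported in \([r_{0},r_{1}]\). It is in particular a bounded Borel weight supported in that window, so it belongs to the class for which Theorem~\ref{thm:5.14} was proved.

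The first step is to identify the diagram integral with \(\Phi_{\psi}(T)\) pathwise, for every fixed \(T\).
\begin{itemize}
\item Apply the Fubini identity from Subsection~\ref{persistence-counting-measures-betti-curves-and-smoothed-functionals} with \(A=K_{T}\).
\item Rewrite \(\varphi_{\psi}(b,d)=\int \mathbf{1}_{\{b\le r<d\}}\psi(r)\,dr\), integrate against \(\mu_{T}^{(1)}\), and swap the two integrals.
\item Use \(\beta_{1}^{T}(r)=\mu_{T}^{(1)}(\{b\le r<d\})\). This gives
\[\int_{\Delta}\varphi_{\psi}\,d\mu_{T}^{(1)}=\int_{r_{0}}^{r_{1}}\beta_{1}^{T}(r)\psi(r)\,dr=\Phi_{\psi}(T),\]
which is exactly \eqref{eq:5.1}.
\end{itemize}

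The only point needing care is justifying Fubini, since \(\mu_{T}^{(1)}\) may have infinitely many points near the diagonal at small radii. I would handle this by working with \(|\psi|\) first. Tonelli gives
\[\int\!\!\int \mathbf{1}_{\{b\le r<d\}}|\psi(r)|\,dr\,\mu_{T}^{(1)}(db\,dd)=\int_{r_{0}}^{r_{1}}\beta_{1}^{T}(r)|\psi(r)|\,dr .\]
The right-hand side is almost surely finite by the pathwise bound of Proposition~\ref{prop:5.5}, namely \(\int_{r_{0}}^{r_{1}}\beta_{1}^{T}\,dr\le |K_{T}^{(r_{1})}|/(2\pi r_{0})<\infty\). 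Only intervals meeting \([r_{0},r_{1}]\) contribute, and the integrand is absolutely integrable, so Fubini applies to \(\psi\) itself. The left-hand side is then a well-defined finite number even though \(\varphi_{\psi}\) need not be compactly supported in \(\Delta\): it vanishes on intervals disjoint from the window, and it is bounded by \(\|\psi\|_\infty (r_1-r_0)\).

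Dividing by \(T\), the convergence to \(\rho_{\psi}\) almost surely and in \(L^{1}\) is then precisely Theorem~\ref{thm:5.14}. By Corollary~\ref{cor:5.15} one may also write the limit as \(\int\psi\,d\lambda\).

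I expect no real obstacle. The one step to write out carefully is the integrability check that legitimizes the interchange of integrals; everything else is a direct citation.
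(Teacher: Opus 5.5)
Your proposal is correct and follows the paper's proof. The paper's proof is the one-line observation that the diagram integral equals \(\Phi_{\psi}(T)\) by the Betti-curve identity \eqref{eq:5.1}, after which Theorem~\ref{thm:5.14} applies directly, since a bounded continuous \(\psi\) is a special case of a bounded Borel weight on \([r_0,r_1]\). Your Tonelli check is a worthwhile detail that the paper leaves implicit: \(\varphi_{\psi}\) is not compactly supported in \(\Delta\), so absolute integrability has to come from \(\int_{r_0}^{r_1}\beta_1^T|\psi|\,dr<\infty\), which the pathwise bound of Proposition~\ref{prop:5.5} supplies for every continuous path.
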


\begin{proof}
This is simply Theorem~\ref{thm:5.14} rewritten using \eqref{eq:5.1}.
\end{proof}

\begin{remark}[the test class is not measure-determining]
\label{rem:5.17}
The class of test functions

\[\mathcal{T} \coloneqq \left\{ \varphi_{\psi}:\ \varphi_{\psi}(b,d) = \int_{b}^{d}\psi(r)\, dr,\ \psi \in C_{c}\left( \left[ r_{0},r_{1} \right] \right) \right\}\]

is not measure-determining on the compact birth--death window

\[\Delta\left[ r_{0},r_{1} \right] \coloneqq \{(b,d):r_{0} \leq b < d \leq r_{1}\}.\]

Indeed, choose numbers

\[r_{0} < b_{1} < b_{2} < d_{1} < d_{2} < r_{1}\]

and define the finite measures

\[\mu_{1} \coloneqq \delta_{\left( b_{1},d_{1} \right)} + \delta_{\left( b_{2},d_{2} \right)},\quad\quad\mu_{2} \coloneqq \delta_{\left( b_{1},d_{2} \right)} + \delta_{\left( b_{2},d_{1} \right)}.\]

Then \(\mu_{1} \neq \mu_{2}\), but for every
\(\psi \in C_{c}\left( \left[ r_{0},r_{1} \right] \right)\),

\[\int_{\Delta[r_{0},r_{1}]}\varphi_{\psi}\, d\mu_{1} = \int_{b_{1}}^{d_{1}}\psi(r)\, dr + \int_{b_{2}}^{d_{2}}\psi(r)\, dr\]

and

\[\int_{\Delta[r_{0},r_{1}]}\varphi_{\psi}\, d\mu_{2} = \int_{b_{1}}^{d_{2}}\psi(r)\, dr + \int_{b_{2}}^{d_{1}}\psi(r)\, dr.\]

Since

\[\mathbf{1}_{\left[ b_{1},d_{1} \right]} + \mathbf{1}_{\left[ b_{2},d_{2} \right]} = \mathbf{1}_{\left[ b_{1},d_{2} \right]} + \mathbf{1}_{\left[ b_{2},d_{1} \right]},\]

it follows that

\[\int_{\Delta[r_{0},r_{1}]}\varphi_{\psi}\, d\mu_{1} = \int_{\Delta[r_{0},r_{1}]}\varphi_{\psi}\, d\mu_{2}\quad\quad\text{for all }\psi \in C_{c}\left( \left[ r_{0},r_{1} \right] \right).\]

Thus the class \(\mathcal{F}\) cannot distinguish \(\mu_{1}\) from
\(\mu_{2}\), and therefore is not measure-determining.

Equivalently, the observables \(\varphi_{\psi}\) only probe the
associated alive-count profile

\[r \mapsto \mu\left( \{(b,d):b \leq r < d\} \right),\]

that is, the Betti-curve transform of the diagram measure. Corollary
5.16 therefore yields a genuine persistence-intensity theory at the
level of smoothed Betti observables, but does not by itself imply a
vague limit theorem for the full diagram measure.
\end{remark}

\section{Concluding remarks}

The scope of the present paper is intentionally focused. We treat planar
Brownian motion with nonzero drift and establish a law of large numbers
for smoothed persistence functionals on compact radius windows away from
0. These assumptions are intrinsic to the method: the proof depends on
the regeneration structure created by the drift, which reduces the
problem to a local boundary analysis near successive cut times. In
particular, the driftless planar case is not a straightforward
extension, since recurrence destroys the good-cut mechanism underlying
the argument. Any corresponding large-time theorem for zero-drift
Brownian motion would therefore require a genuinely different method,
most likely based on scaling rather than renewal. In this sense, the
present work provides the first rigorous asymptotic framework for
persistent topological observables of continuous stochastic paths, and
points toward a broader program beyond the regenerative setting.

\end{document}